\documentclass{amsart}

\usepackage[english]{babel}
\usepackage{hyperref}
\usepackage{enumitem}
\usepackage{graphicx}
\usepackage{xcolor} 
\usepackage{pgfplots}
\pgfplotsset{compat=1.18}

\newtheorem{theorem}{Theorem}[section]
\newtheorem{definition}[theorem]{Definition}
\newtheorem{proposition}[theorem]{Proposition}
\newtheorem{corollary}[theorem]{Corollary}
\newtheorem{lemma}[theorem]{Lemma}
\newtheorem{example}[theorem]{Example}

\theoremstyle{remark}

\numberwithin{equation}{section}

\begin{document}

\title[Generalized Semi-Riemannian Submersions and Foliations]{Generalized Semi-Riemannian Submersions and Foliations}


\author[B. O. Alves]{Benigno O. Alves}
\address{Benigno O. Alves \textup{(corresponding author)} \hfill\break\indent Instituto de Matem\'{a}tica e Estat\'{\i}stica, Universidade Federal da Bahia \hfill\break\indent Rua Bar\~ao de Jeremoabo, 40170-115 Salvador, Bahia, Brazil}
\email{benignoalves@ufba.br}

\thanks{The authors would like to thank Patricia Marçal and Miguel Angel Javaloyes for their valuable contributions to this work.}

\author[J. C. Oliveira]{Jair C. de Oliveira}
\address{Jair C. de Oliveira \hfill\break\indent Instituto de Matem\'{a}tica e Estat\'{\i}stica, Universidade Federal da Bahia \hfill\break\indent Rua Bar\~ao de Jeremoabo, 40170-115 Salvador, Bahia, Brazil}
\email{jair.carneiro@ufba.br}

\thanks{This study was financed in part by the Coordena\c{c}\~ao de Aperfei\c{c}oamento de Pessoal de N\'{\i}vel Superior - Brasil (CAPES) - Finance Code 001.}

\date{\today}

\keywords{Semi-Riemannian submersion, semi-Riemannian foliation, transnormal foliations, stationary foliations}

\begin{abstract}
In this article, we introduce the concepts of generalized semi-Riemannian submersions and foliations, extending the classical framework to accommodate leaves with varying or degenerate causal characters, such as homogeneous foliations on semi-Riemannian manifolds and codimension-one lightlike foliations on Lorentz manifolds (including pp-waves). We establish that a regular foliation with a basic horizontal distribution is generalized semi-Riemannian if and only if it is transnormal. Furthermore, we introduce stationary foliations and prove that a stationary transnormal foliation induces a classical Riemannian foliation on the spatial rest space of a conformal observer. As a geometric consequence, we obtain a rigidity result ruling out stationary codimension-one lightlike foliations on positively curved Robertson-Walker spacetimes or static spacetimes. Finally, we derive an O'Neill-type sectional curvature formula for submersions with an involutive total distribution.
\end{abstract}

\maketitle

\section{Introduction}

Foliation theory provides a powerful framework for studying wave fronts. In particular, Huygens’ principle implies that in a Riemannian (or Finsler) manifold, a wave propagating at constant speed along its fronts is governed by an isoparametric function whose level sets have constant mean curvature.  

When the singular level sets are submanifolds, one obtains a \emph{singular Riemannian foliation}, which on its regular part can be described locally by a Riemannian submersion \cite{wang1987isoparametric}. In anisotropic media, one must instead work on Finsler manifolds \cite{alves2024isoparametric,dehkordi2019huygens,javaloyes2023general,alexandrino2019finsler}. The theory of singular Riemannian foliations is by now well developed \cite{alexandrino2013progress}, whereas singular Finsler foliations have appeared only more recently \cite{alexandrino2019singular,alexandrino2024equifocal}.

In Lorentzian geometry, foliations encode key physical structures: light cones, horizons, and causal propagation all arise as leaves of suitable foliations. If $(M,g)$ is a spacetime, a \emph{spacelike} (resp.\ \emph{timelike}) foliation is one whose leaves have index $0$ (resp.\ index $1$) \cite{walschap1999spacelike,chaves2020foliations,qadir2006foliation,montiel1999uniqueness}. In \cite{caramello2024transverse}, the authors obtained a transverse diameter theorem in the context of Lorentzian foliations, which can be interpreted as a Hawking–Penrose-type singularity theorem for timelike geodesics transverse to the foliation.

When the leaves are degenerate (null), one obtains a \emph{lightlike foliation} \cite{bolosgeometric,duggal2012foliations}. For example, in a pp-wave, we naturally obtain a lightlike foliation; indeed, according to the definitions in \cite{aazami2023finsler,allout2022homogeneous}, such spaces admit a parallel lightlike vector field $N$, and by Corollary~\ref{intppwave}, the distribution $N^{\perp}$ is integrable. We can call this foliation a \emph{pp-wave foliation}. Such foliations lie at the heart of both mathematical relativity and the burgeoning study of gravitational waves. 

In the classical setting, a singular foliation $\mathcal{F}$ is called \emph{transnormal} if its horizontal distribution $\mathcal{H}$ is invariant under the geodesic flow: whenever a geodesic $\gamma$ satisfies $\gamma'(t_{0})\in \mathcal{H}$, it remains horizontal for all time. Transnormal foliations coincide with singular Riemannian (or Finsler) foliations \cite{alexandrino2013progress,alexandrino2019singular}. In Lorentzian geometry, Galloway showed that any null hypersurface admits a geodesically invariant tangent null vector field, hence any codimension-one lightlike foliation is transnormal \cite[Prop.~3.1]{galloway2004null}.

In this work, we bridge Lorentzian geometry and classical Riemannian foliation theory by proving that a stationary $g$-transnormal foliation on a spacetime induces a transnormal foliation on the spatial rest space of a conformal observer (Theorem~\ref{FTRW}). As an important physical and geometric consequence, we establish a topological obstruction for stationary lightlike structures in cosmological backgrounds: whenever the spatial rest space of a Robertson–Walker spacetime, or a static spacetime, has strictly positive sectional curvature and $U$ is the canonical observer field, there exists no non-trivial codimension-one lightlike foliation that is stationary with respect to $U$ (Corollary~\ref{cOrll}). By leveraging Lytchak and Thorbergsson's rigidity theorem for Riemannian foliations on positively curved manifolds, this non-existence result demonstrates that global stationary lightlike wave fronts—such as coherent pp-wave-like configurations—are geometrically prohibited in closed, positively curved FLRW universes.

A particularly rich class of examples is provided by \emph{homogeneous foliations}, i.e.,\ those whose leaves are the orbits of an isometric action. In both Riemannian and Finslerian settings, homogeneous foliations furnish canonical instances of singular transnormal foliations and serve as building blocks for the local and semilocal theory of singular Riemannian foliations \cite{alexandrino2022lie}. Indeed, the slice theorem for group actions implies that, around each point, a homogeneous foliation is determined by the linear isotropy representation, yielding a simple model for the foliation in a tubular neighborhood. Moreover, recent work on semi-Riemannian isometric actions \cite{zeghib1999isometry,chaib2025isometries,allout2022homogeneous} highlights fundamental distinctions from the positive-definite case: orbits may be degenerate, isotropy groups need not be compact, leaves can fail to be closed, and the orbit space may even cease to be Hausdorff. Consequently, homogeneous foliations in semi-Riemannian manifolds not only supply key examples but also guide the development of the general theory by encoding the geometry of group actions into an infinitesimal model for singular transnormal foliations. 

Homogeneous foliations are at the heart of the theory of Very Special Relativity, in which the laws of physics are assumed to be invariant under a proper subgroup of the Lorentz group rather than the full Lorentz group. In this way, the study of the quotient space of isometric actions can provide essential information.

Despite these rich developments, neither the classical semi-Riemannian foliation theory nor its Finsler analogue fully accommodates foliations that feature leaves of mixed causal type or degenerate leaves—most notably homogeneous foliations and codimension-one null foliations such as pp-waves. 

A \emph{(classical) semi-Riemannian foliation} is a transnormal foliation such that the leaves are non-degenerate. A complementary viewpoint comes from submersions. A \emph{semi-Riemannian submersion} 
$\pi\colon (M,g)\to (B,h)$
is a surjective submersion whose fibers are non-degenerate submanifolds and which preserves the metric on horizontal vectors \cite{o1966fundamental,o1983semi}. Semi-Riemannian foliations are precisely those regular foliations locally arising from such submersions \cite{dolgonosova2018pseudo}. The celebrated O'Neill formula then relates the sectional curvatures of $M$ and $B$:
\[
K^h_{\pi(p)}(X,Y)
\;=\;
K^g_{p}(\tilde X,\tilde Y)
\;+\;\frac{3}{4}\,
g_p\bigl([\tilde X,\tilde Y]^{\mathcal V},\,[\tilde X,\tilde Y]^{\mathcal V}\bigr),
\]
where $\tilde X,\tilde Y$ are horizontal lifts and $[\tilde X,\tilde Y]^{\mathcal V}$ is the vertical part of their commutator. In the pseudo-Finslerian setting, analogous formulas have been developed by Javaloyes and Huber \cite{huber2023fundamental}.

Our primary goal is to introduce a unified notion of \emph{generalized semi-Riemannian submersions} and \emph{foliations} which allows leaves of varying causal character, including degenerate ones, within a single framework. A \emph{generalized semi-Riemannian foliation} is a regular transnormal foliation whose horizontal distribution is locally spanned by basic vector fields. This second condition replaces the classical requirement that leaves be non-degenerate, thus allowing the foliation to include leaves with different causal characters, or even lightlike leaves. In Theorem~\ref{transnormalidadeSubmersao}, we show that a regular foliation is generalized semi-Riemannian if and only if it is locally defined by a generalized semi-Riemannian submersion, itself a natural generalization of classical semi-Riemannian submersions \cite{dolgonosova2018pseudo}. Fundamental examples include the regular orbits of isometric actions and foliations by lightlike hypersurfaces.

We investigate the local structure of such foliations, particularly under the assumption that the sum of the vertical and horizontal distributions is involutive, and we derive an O'Neill-type curvature formula (Theorem~\ref{FormuladeOneill}).

\medskip

\section{Distributions, Foliations, and Transnormal Foliations}

To begin, we offer a brief physical motivation. A \emph{wave} on a Riemannian manifold 
$(S,h)$ is a smooth function
$\varphi :
\mathbb R \times S\to \mathbb R$ 
satisfying the classical \emph{wave equation}
$\Delta\varphi_t=\frac{d^2}{dt^2}\varphi_p$
where 
$\varphi_t(\cdot)=\varphi(t,\cdot)$ and $\varphi_p(\cdot)=\varphi(\cdot,p)$. At each fixed time $t$, the \emph{wave front} is given by the level set of $\varphi_t$, whose tangent spaces are defined by $\operatorname{Ker}d\varphi$ for regular points. The Riemannian gradient $\nabla \varphi_t$ indicates the propagation velocity of the wave front. Thus, the collection of wave fronts at time $t$ forms a foliation of $S$.

A \emph{singular distribution} on a manifold $M$ is a map $\mathcal{D}$ that associates with each point $p\in M$ a vector subspace $\mathcal{D}_p$ of $T_pM$. We say that $\mathcal{D}$ is \emph{regular} if $\dim \mathcal{D}_x$ is constant. A \emph{section} of $\mathcal{D}$ is a map $X:M\to \mathcal{D}$ such that $\pi\circ X=\operatorname{Id}|_M$, or equivalently, it is a section of $TM$ tangent to $\mathcal{D}$, where $\pi:TM\to M$ is the canonical projection. A section $X$ of $\mathcal{D}$ is \emph{smooth} if $X(f)\in C^{\infty}(M)$ for each $f\in C^{\infty}(M)$. This is equivalent to saying that $X$ is a smooth vector field on $M$ tangent to $\mathcal{D}$. The set of smooth sections of $\mathcal{D}$ is denoted by $\mathfrak{X}(\mathcal{D})$. The distribution $\mathcal{D}$ is \emph{smooth} if for each $p\in M$ and $v\in \mathcal{D}_p$ there is $X\in \mathfrak{X}(\mathcal{D})$ such that $X(p)=v$. Furthermore, $\mathcal{D}$ is \emph{locally finitely generated} if for any $p\in M$ there are an open neighborhood $U$ of $p$ and vector fields $Y_1,\dots,Y_l\in \mathfrak{X}(U)$ such that $\mathcal{D}|_U=\operatorname{span}\{Y_1,\dots,Y_l\}$.

A \emph{singular foliation} is a partition $\mathcal{F} = \{L_p\}_{p \in M}$ of a manifold $M$ into immersed submanifolds $L_p$ called the \emph{leaf passing through $p$}, where the \emph{vertical distribution} $\mathcal{V}^{\mathcal{F}}=\{\mathcal{V}^{\mathcal{F}}_p:=T_pL_p\}$ is smooth (we will write simply $\mathcal{V}$ when there is no risk of confusion). A \emph{regular foliation} (or simply a \emph{foliation}) is a smooth singular foliation in which all leaves have the same dimension. It is well known that a partition of $M$ into submanifolds of equal dimension $k$ constitutes a foliation if and only if it can be locally described by a submersion. 

A distribution $\mathcal{D}$ on a manifold $B$ is said to be \emph{integrable} if there exists a foliation $\mathcal{F}_{\mathcal{D}} = \{L_x\}_{x \in B}$ such that $T_xL_x = \mathcal{D}_x$ for each $x \in B$. The celebrated \emph{Frobenius Theorem} ensures that a regular distribution is integrable if and only if it is involutive, that is, $[X,Y]\in \mathfrak{X}(\mathcal{D})$ for each $X,Y\in \mathfrak{X}(\mathcal{D})$. 

In the singular case, however, this equivalence fails. Nonetheless, there is a powerful generalization known as the \emph{Stefan-Sussmann Theorem}. 

\begin{theorem}[Stefan-Sussmann, \cite{agrachev2013control}]\label{Tstefansussman}
A singular smooth distribution $\mathcal{D}$ is integrable if and only if $\mathcal{D}$ is involutive and locally finitely generated.
\end{theorem}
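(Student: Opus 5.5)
The necessity direction is the easy one and I will do it first. Suppose $\mathcal{D}$ is integrable, with foliation $\mathcal{F}_{\mathcal{D}}=\{L_x\}$. For local finite generation, fix $p$ and let $k=\dim\mathcal{D}_p$. Choose smooth sections $Y_1,\dots,Y_k$ with $Y_i(p)$ a basis of $\mathcal{D}_p$; they stay linearly independent near $p$. I plan to use a local "splitting" chart around $p$ in which the leaf through $p$ is a $k$-dimensional slice. In that chart, nearby leaves are assembled from slice directions together with finitely many further generators. This is the technical part of necessity. An alternative that avoids it is to cite the Stefan normal form of an integrable singular distribution.

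For involutivity, take $X,Y\in\mathfrak{X}(\mathcal{D})$. Each such field is tangent to every leaf, because leaves are immersed submanifolds and $X(q)\in T_qL_q$. The restriction of $X$ and $Y$ to a leaf $L$ gives smooth vector fields on $L$, at least locally on plaques, where the inclusion is an initial immersion. The bracket of fields tangent to an immersed submanifold is again tangent, so $[X,Y](q)\in T_qL_q=\mathcal{D}_q$ for every $q$.

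For sufficiency, assume $\mathcal{D}$ is involutive and locally finitely generated. I would follow Sussmann's orbit method. Let $\mathcal{G}$ be the family of local flows $\phi^X_t$ of fields $X\in\mathfrak{X}(\mathcal{D})$. Define the leaf of $p$ as its orbit, i.e.\ all points reachable by finite compositions of such flows. By Sussmann's orbit theorem, each orbit is an immersed submanifold. Its tangent space at $q$ is $P_q$, the span of all vectors $(\phi_*)Y(\cdot)$, where $\phi$ runs over compositions of these flows. The orbits partition $M$, and since $\mathcal{D}_q\subset P_q$ the resulting distribution is smooth. It therefore remains to prove $P_q=\mathcal{D}_q$, i.e.\ that $\mathcal{D}$ is invariant under its own flows: $(\phi^X_t)_*\mathcal{D}_q=\mathcal{D}_{\phi^X_t(q)}$.

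That invariance is the main obstacle, and it is exactly where local finite generation is needed. Without it, involutivity alone fails, as the classical examples show. Near a point, write $\mathcal{D}=\operatorname{span}\{Y_1,\dots,Y_l\}$. Involutivity gives $[X,Y_i]=\sum_j a_{ij}Y_j$ with smooth $a_{ij}$. Set $W_i(t)=(\phi^X_{-t})_*Y_i(\phi^X_t(q))$; differentiating gives the linear ODE
\[
\tfrac{d}{dt}W_i(t)=\textstyle\sum_j a_{ij}(\phi^X_t(q))\,W_j(t).
\]
Uniqueness of solutions to this linear system shows $W_i(t)=\sum_j b_{ij}(t)Y_j(q)$, where $b$ solves the matrix ODE with $b(0)=I$ and is therefore invertible. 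Hence $(\phi^X_t)_*\mathcal{D}_q=\mathcal{D}_{\phi^X_t(q)}$ for small $t$. A connectedness argument along the flow line, covering it by finitely many generating neighborhoods, extends this to all $t$. It follows that $P_q=\mathcal{D}_q$, so the orbits form a singular foliation integrating $\mathcal{D}$.
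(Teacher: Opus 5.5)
The paper gives no proof of this statement; it quotes it from Agrachev--Sachkov. Your argument that an integrable distribution is involutive is fine: a field tangent to an immersed leaf restricts to a smooth field on each locally embedded piece, so brackets stay tangent.

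The genuine gap is in the sufficiency part, at ``Involutivity gives $[X,Y_i]=\sum_j a_{ij}Y_j$ with smooth $a_{ij}$.'' In the paper's definitions, $\operatorname{span}\{Y_1,\dots,Y_l\}$ is a \emph{pointwise} span, and involutivity only says $[X,Y_i](q)\in\mathcal{D}_q$ for each $q$. Coefficients exist pointwise, but near points where $\dim\mathcal{D}$ jumps they cannot in general be chosen smooth, continuous, or even bounded. Without them there is no linear ODE along the flow line.

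This cannot be repaired, because with these definitions the ``if'' direction is false. On $\mathbb{R}^2$ take $f(x)=e^{-1/x}$ for $x>0$, $f(x)=0$ for $x\le 0$, and $\mathcal{D}=\operatorname{span}\{\partial_x,f\partial_y\}$. Its sections are the fields $a\partial_x+b\partial_y$ with $b\equiv 0$ on $\{x\le 0\}$. All derivatives of such $b$ vanish on that closed half-plane, so brackets of sections are again sections. Thus $\mathcal{D}$ is involutive and finitely generated in the paper's sense. Yet it is not integrable: the leaf through $(0,0)$ would be a curve everywhere tangent to $\partial_x$. It would then contain $(t,0)$ for small $t>0$, where $\dim\mathcal{D}=2$. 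In your computation this appears as $[\partial_x,f\partial_y]=x^{-2}\,f\partial_y$ for $x>0$, whose coefficient blows up at $x=0$. Correspondingly, the flow of $\partial_x$ carries the line $\mathcal{D}_{(-1,0)}$ onto a proper subspace of $\mathcal{D}_{(1,0)}=T_{(1,0)}\mathbb{R}^2$.

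What your ODE/orbit argument actually proves is Hermann's theorem. Suppose the $C^\infty$-module generated by $Y_1,\dots,Y_l$ is closed under brackets, i.e.\ $[Y_i,Y_j]=\sum_k c_{ijk}Y_k$ with smooth $c_{ijk}$ (for instance, if the $Y_i$ generate $\mathfrak{X}(\mathcal{D}|_U)$ as a module). Then for $X=\sum_k f_kY_k$ the $a_{ij}$ are smooth, flows of such $X$ preserve $\mathcal{D}$, and the orbits integrate $\mathcal{D}$.

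With that module-theoretic notion, however, the ``only if'' direction fails. On $\mathbb{R}$, the distribution with $\mathcal{D}_x=0$ for $x\le 0$ and $\mathcal{D}_x=T_x\mathbb{R}$ for $x>0$ is integrable. Its sections are the fields vanishing on $(-\infty,0]$, and they do not form a finitely generated module near $0$. With the pointwise notion, by contrast, local finite generation holds for \emph{every} smooth distribution (Drager--Lee--Park--Richardson). So your splitting-chart sketch, which never explains why finitely many transverse generators suffice, is both incomplete and beside the point.

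The correct Stefan--Sussmann criterion is that $\mathcal{D}$ is integrable if and only if it is invariant under the local flows of its sections. Your orbit-theorem step does establish the substantive half, namely that invariance implies integrability. But the invariance must be assumed, or derived from module-level hypotheses; it does not follow from pointwise involutivity plus pointwise finite generation.
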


Given a semi-Riemannian metric $g$ on $M$, we can derive the following distributions associated with the foliation $\mathcal{F}$: 
\begin{itemize} 
    \item The \emph{horizontal distribution}: $\mathcal{H}:=\mathcal{V}^{\perp}$, which is the orthogonal complement of the vertical distribution at each point; 
    \item the \emph{radical distribution}: $\mathcal{R}=\mathcal{V}\cap \mathcal{H}$; and 
    \item the \emph{total distribution}: $\mathcal{A}=\mathcal{V}+\mathcal{H}$, which combines the vertical and horizontal distributions. 
\end{itemize}
A leaf $L_p$ is said to be \emph{non-degenerate} when $\mathcal{R}_q=\{0\}$ for all $q\in L_p$, or equivalently, if $\mathcal{A}_q=T_qM$. Otherwise, the leaf $L_p$ is considered \emph{degenerate}.

\begin{lemma}\label{SuavidadeDistribuicaoHorizontal}
The horizontal distribution of a regular foliation is a smooth regular $(n-k)$-dimensional distribution.
\end{lemma}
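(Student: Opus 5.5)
Since $\mathcal{F}$ is regular of dimension $k$, around any point $p$ there is a submersion $\pi\colon U\to\mathbb{R}^{n-k}$ whose fibers are the plaques of $\mathcal{F}$. Hence $\mathcal{V}|_U=\ker d\pi$ is locally spanned by $k$ smooth, pointwise linearly independent vector fields $V_1,\dots,V_k$, for instance the coordinate fields $\partial_{x_1},\dots,\partial_{x_k}$ of a foliated chart. I will describe $\mathcal{H}$ as the kernel of a smooth bundle map of constant rank. Regularity and smoothness then follow from the standard fact that the kernel of a constant-rank smooth vector bundle morphism is a smooth subbundle.

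\textbf{Constant dimension.} Consider the musical isomorphism $\flat\colon TM\to T^*M$, $v\mapsto g(v,\cdot)$, which exists because $g$ is non-degenerate. Then $\mathcal{H}_q=\{w: g(w,v)=0\ \forall v\in\mathcal{V}_q\}$ is the preimage under $\flat$ of the annihilator $\mathcal{V}_q^0\subset T_q^*M$. Since $\flat$ is a linear isomorphism, $\dim\mathcal{H}_q=\dim\mathcal{V}_q^0=n-k$ at every point, regardless of whether $\mathcal{V}_q$ is degenerate. This is the one place where the argument must avoid the Riemannian intuition $\mathcal{V}\oplus\mathcal{H}=TM$, which fails when $\mathcal{R}_q\neq 0$. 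Non-degeneracy of $g$ on all of $T_qM$ is enough.

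\textbf{Smoothness.} On $U$, define the smooth bundle morphism
\[
\Phi\colon TU\to U\times\mathbb{R}^k,\qquad \Phi(w)=\bigl(g(w,V_1),\dots,g(w,V_k)\bigr).
\]
Its kernel at $q$ is exactly $\mathcal{H}_q$. Its rank equals $n-(n-k)=k$ at every point, by the count above, or directly because the covectors $\flat V_i(q)$ are linearly independent. Constant rank gives smooth local sections spanning the kernel. Concretely, I would complete $\flat V_1,\dots,\flat V_k$ to a smooth local coframe $\theta_1,\dots,\theta_n$, which is possible near $q$ after possibly shrinking $U$. Then I would take the dual frame $E_1,\dots,E_n$ and use $H_j:=\flat^{-1}$ applied appropriately, or more simply the vector fields $E_{k+1},\dots,E_n$. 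These annihilate $\theta_1,\dots,\theta_k$, so $g(E_j,V_i)=0$, and they form a smooth frame of $\mathcal{H}|_U$.

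This frame shows that every $v\in\mathcal{H}_q$ extends to a smooth local horizontal field. Multiplying by a bump function gives a global section of $\mathcal{H}$. So $\mathcal{H}$ is smooth, regular, and of dimension $n-k$. No step is a real obstacle; the only subtlety is the dimension count in the degenerate case, which the isomorphism $\flat$ handles.
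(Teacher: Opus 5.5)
Your proposal is correct and follows the paper's route. The paper likewise realizes $\mathcal{H}|_U$ as the kernel of a smooth bundle map built from a local frame $V_1,\dots,V_k$ of $\mathcal{V}$, namely $v\mapsto\sum_i g(v,V_i)V_i$ into $\mathcal{V}|_U$, which is your $\Phi$ composed with the frame isomorphism $U\times\mathbb{R}^k\cong\mathcal{V}|_U$. It then applies the constant-rank kernel theorem. Your annihilator count via $\flat$ supplies the dimension $n-k$ that the paper asserts without proof, and your dual-coframe construction is an explicit version of that same theorem.
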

\begin{proof}
Given $p \in M$, consider an open neighborhood $U$ of $p$ and smooth vertical vector fields $V_1, \dots, V_k$ on $U$ such that $\{V_1(q), \dots, V_k(q)\}$ is a basis for $\mathcal{V}_q$ for all $q \in U$. Define the map $F: TM|_U \to \mathcal{V}|_U$ by $F(v) = \sum_{i=1}^k g(v, V_i) V_i$. Note that $\ker F = \mathcal{H}|_U$ and that $F$ is a smooth bundle homomorphism over $U$. Since $\mathcal{H}$ is a regular distribution of dimension $n-k$, it follows that $\mathcal{H}$ is smooth \cite[Theorem 10.34]{lee2013smooth}.
\end{proof}

A foliation $\mathcal{F}$ on a semi-Riemannian manifold is a \emph{lightlike foliation} if its leaves are degenerate submanifolds. If $\mathcal{F}$ is a codimension-one lightlike foliation, there exists a lightlike vector field $N$ such that $N^{\perp}=\mathcal{V}^{\mathcal{F}}$. In this case, $N$ is a \emph{generator vector field} of $\mathcal{F}$. Note that any lightlike foliation on a spacetime is transversal to any restspace foliation.  

\begin{example}\label{intppwave}
Let $(M,g)$ be a spacetime and $N$ be a parallel lightlike vector field. Then the distribution $N^{\perp}$ is integrable. Indeed, given $X,Y \in \mathfrak{X}(N^{\perp})$, metric compatibility and the condition $\nabla N = 0$ yield
\begin{align*}
    g([X,Y],N) = Xg(Y,N) - g(Y,\nabla_X N) - Yg(X,N) + g(X,\nabla_Y N) = 0.
\end{align*}
This implies that $[X,Y] \in \mathfrak{X}(N^{\perp})$, and thus integrability follows from Frobenius' theorem. In particular, every pp-wave spacetime $(M,g,N)$ naturally admits a lightlike foliation defined by $N^{\perp}$. 
\end{example}

An \emph{observer vector field} on a spacetime is a future-directed normalized timelike vector field. A \emph{restspace} of $U$ is an integral manifold of the orthogonal distribution $U^{\perp}$, which is a spacelike semi-Riemannian submanifold. When $U^{\perp}$ is integrable, we say that $U$ is \emph{irrotational} (cf.\ \cite{o1983semi}) and the foliation $\mathcal{F}_{U^{\perp}}=\{S_p\}$ is the \emph{restspace foliation}, where $S_p$ is the restspace containing $p$. 

In \cite[Proposition 6.1]{bolosgeometric}, an integrability criterion was established for a codimension-one lightlike distribution in a spacetime. In the sequel, we present a more general integrability criterion.

\begin{proposition}\label{criteriointegrabilidade}
Let $(M,g)$ be a spacetime and $U$ be an irrotational observer vector field. Let $\mathcal{D} = N^{\perp}$ be a codimension-one null distribution on a spacetime $(M,g)$ with null generator $N$. Then $\mathcal{D}$ is integrable if and only if $\nabla_{N}N =\lambda N$ for some function $\lambda$ and the spacelike distribution $\mathcal{D}\cap U^{\perp}$ is integrable.
\end{proposition}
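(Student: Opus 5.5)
The plan is to run both directions through Frobenius, testing involutivity of $\mathcal{D}=N^{\perp}$ against $N$. The key observation is that $N\in\mathcal{D}$ because $N$ is null. For $X,Y\in\mathfrak{X}(\mathcal{D})$, torsion-freeness and metric compatibility give
\[
g([X,Y],N)=g(\nabla_XY,N)-g(\nabla_YX,N)=-g(Y,\nabla_XN)+g(X,\nabla_YN),
\]
so involutivity of $\mathcal{D}$ is equivalent to the tensorial condition $g(\nabla_XN,Y)=g(\nabla_YN,X)$ for all $X,Y\in\mathcal{D}$. That is, the restriction to $\mathcal{D}$ of the bilinear form $B(X,Y)=g(\nabla_XN,Y)$ must be symmetric.

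Next I would choose a convenient pointwise splitting of $\mathcal{D}$. Since $U$ is timelike and $N$ is null, $g(U,N)\neq 0$, so $N\notin U^{\perp}$. Hence $\mathcal{S}:=\mathcal{D}\cap U^{\perp}$ is a regular distribution of rank $n-2$, and $\mathcal{D}=\mathcal{S}\oplus\operatorname{span}\{N\}$; the same holds at the level of smooth sections, locally. By bilinearity, symmetry of $B$ on $\mathcal{D}$ then reduces to two conditions:
\begin{itemize}
\item[(a)] $B(X,N)=B(N,X)$ for $X\in\mathcal{S}$;
\item[(b)] $B(X,Y)=B(Y,X)$ for $X,Y\in\mathcal{S}$.
\end{itemize}
The case $B(N,N)$ is automatic.

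For (a), note that $g(N,N)=0$ gives $B(X,N)=g(\nabla_XN,N)=\tfrac12 X\,g(N,N)=0$. So (a) says $g(\nabla_NN,X)=0$ for all $X\in\mathcal{S}$. Also $g(\nabla_NN,N)=0$, so in fact $\nabla_NN\perp\mathcal{D}$, i.e.\ $\nabla_NN\in\mathcal{D}^{\perp}=\operatorname{span}\{N\}$. This is exactly $\nabla_NN=\lambda N$, and $\lambda$ is smooth since $\lambda=g(\nabla_NN,U)/g(N,U)$.

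For (b), take $X,Y\in\mathfrak{X}(\mathcal{S})$. The identity above gives $g([X,Y],N)=B(Y,X)-B(X,Y)$. Because $U$ is irrotational, $[X,Y]\in U^{\perp}$ automatically. Therefore $[X,Y]\in\mathcal{S}$ iff $g([X,Y],N)=0$ iff (b) holds, so (b) is equivalent to involutivity of $\mathcal{S}$, and by Frobenius to its integrability.

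Combining (a) and (b): $\mathcal{D}$ is involutive iff $\nabla_NN=\lambda N$ and $\mathcal{S}$ is integrable, which proves both directions at once. The only delicate point is the regularity and splitting of $\mathcal{S}$, which rests on $g(U,N)\neq0$. The irrotationality hypothesis is used precisely to guarantee $[X,Y]\in U^{\perp}$ in step (b).
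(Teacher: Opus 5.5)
Your proof is correct and follows essentially the same route as the paper. Both arguments split $\mathcal{D}=(\mathcal{D}\cap U^{\perp})\oplus\operatorname{span}\{N\}$, use irrotationality to reduce the brackets within $\mathcal{D}\cap U^{\perp}$ to its involutivity, and use $g([X,N],N)=g(X,\nabla_NN)$ to turn the brackets with $N$ into the condition $\nabla_NN\in\operatorname{span}\{N\}$. Your use of the tensorial form $B(X,Y)=g(\nabla_XN,Y)$ just makes explicit some steps the paper asserts without comment: the reduction to these two cases, and the regularity of $\mathcal{D}\cap U^{\perp}$ via $g(U,N)\neq 0$.
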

\begin{proof}
Since $U^{\perp}$ is integrable, for any $X, Y \in \mathfrak{X}(\mathcal{D}\cap U^{\perp})$ we already have $[X,Y]\in \mathfrak{X}(U^{\perp})$. Hence $\mathcal{D}$ is integrable precisely when the spacelike slice $\mathcal{D}\cap U^{\perp}$ is integrable and, for every $X\in \mathfrak{X}(\mathcal{D})$, the commutator $[X,N]$ remains in $\mathfrak{X}(\mathcal{D})$.
We claim that the second condition is equivalent to requiring $\nabla_{N}N\in \mathfrak{X}(\mathcal{R})$, where $\mathcal{R}=\mathcal{D}\cap \mathcal{D}^{\perp}=\operatorname{span}(N)$. Indeed, for any $X\in \mathfrak{X}(\mathcal{D})$, one has
\[
  [X,N]\in \mathfrak{X}(\mathcal{D}) \iff g \bigl([X,N],\,N\bigr)=0.
\]
By metric compatibility of the Levi-Civita connection,
\begin{align*}
    g([X, N], N) &= g(\nabla_X N - \nabla_N X, N) \\
    &= \frac{1}{2} X\big(g(N, N)\big) - N\big(g(X, N)\big) + g(X, \nabla_N N) \\
    &= g(X, \nabla_N N).
\end{align*}
Therefore, $g([X,N],N)=0$ for all $X\in \mathfrak{X}(\mathcal{D})$ if and only if $\nabla_{N}N$ is orthogonal to every $X\in\mathfrak{X}(\mathcal{D})$, i.e., lies in the radical $\mathcal{R}=\operatorname{span} N$ (Lemma~\ref{Pontochave}). This completes the proof. 
\end{proof}

If $\pi:M\to B$ is a submersion, then the regular distribution $\mathcal{V}_p = \ker d\pi_p$ is integrable, and its leaves coincide with the fibers of the submersion. Such a foliation is called a \emph{simple foliation} and is denoted by $\mathcal{F}_{\pi}$. A partition of $M$ is a regular foliation of dimension $k$ if and only if it is locally a simple foliation. A vector field $X$ is called a \emph{projectable field} with respect to a submersion $\pi:M\to B$ if there exists a vector field $Y$ on $B$ such that $d\pi(X_p) = Y_{\pi(p)}$ for every $p \in M$. We say that $X$ is a \emph{foliated vector field} of a foliation $\mathcal{F}$ if its flow maps leaves to leaves. When the foliation is simple, these two definitions coincide.

\begin{lemma}\label{campoprojetavel}
Let $\pi:M\to B$ be a submersion. The following are equivalent:
\begin{enumerate}
    \item $X$ is projectable;
    \item $X$ is a foliated vector field of the foliation $\mathcal{F}_{\pi} = \{\pi^{-1}(x)\}_{x\in B}$; and
    \item $[X,Z]\in \mathfrak{X}(\mathcal{V})$ for every $Z \in \mathfrak{X}(\mathcal{V})$, where $\mathcal{V}=\operatorname{ker}d\pi$ is the vertical distribution.
\end{enumerate}
\end{lemma}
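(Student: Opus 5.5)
I will prove the cycle of implications (1)$\Rightarrow$(3)$\Rightarrow$(2)$\Rightarrow$(1). Everything is local, so I work in submersion charts. Around any $p\in M$, the local form of submersions gives coordinates $(x^1,\dots,x^m,y^1,\dots,y^k)$ on $M$ and $(x^1,\dots,x^m)$ on $B$ in which $\pi(x,y)=x$. In such a chart $\mathcal{V}$ is spanned by $\partial_{y^j}$. Write $X=\sum a^i(x,y)\partial_{x^i}+\sum b^j(x,y)\partial_{y^j}$.

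For (1)$\Rightarrow$(3), suppose $d\pi X=Y\circ\pi$ and $Z\in\mathfrak{X}(\mathcal{V})$, so that $Z$ is $\pi$-related to the zero field. Then $X$ is $\pi$-related to $Y$, and the naturality of brackets under related vector fields gives that $[X,Z]$ is $\pi$-related to $[Y,0]=0$. Hence $d\pi[X,Z]=0$, that is, $[X,Z]\in\mathfrak{X}(\mathcal{V})$.

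For (3)$\Rightarrow$(1), apply (3) locally to $Z=\partial_{y^j}$, after multiplying by a bump function so that $Z$ is a global vertical field. Near $p$ we get $[X,\partial_{y^j}]=-\sum(\partial_{y^j}a^i)\partial_{x^i}-\sum(\partial_{y^j}b^l)\partial_{y^l}$. Verticality forces $\partial_{y^j}a^i=0$, so $a^i=a^i(x)$ on connected chart domains. The key obstacle is passing from this local fact to a globally well-defined $Y$ on $B$. Locally constant along fibers does not imply constant on each fiber unless the fibers $\pi^{-1}(x)$ are connected. I will therefore either assume connected fibers, as is implicit in identifying $\mathcal{F}_\pi$ with a foliation whose leaves are the fibers, or interpret projectability locally, which is the setting used throughout the paper. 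Under this convention, $Y(x)=\sum a^i(x)\partial_{x^i}$ is well defined and smooth, and $d\pi X=Y\circ\pi$.

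For (1)$\Leftrightarrow$(2), let $\phi_t$ be the flow of $X$ and $\psi_t$ the flow of $Y$. Relatedness gives $\pi\circ\phi_t=\psi_t\circ\pi$ wherever both sides are defined. Hence $\phi_t$ maps $\pi^{-1}(x)$ into $\pi^{-1}(\psi_t(x))$, and applying $\phi_{-t}$ shows the image is the whole fiber (locally, at least), which gives (2). Conversely, suppose $\phi_t$ maps fibers into fibers. Then $\pi\circ\phi_t$ is constant on fibers, so in coordinates $\pi(\phi_t(x,y))$ is independent of $y$. Differentiating at $t=0$ shows that $a^i=d\pi(X)^i$ is independent of $y$, which is (1) with the same connectedness caveat. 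Alternatively, (2)$\Rightarrow$(3) follows from $[X,Z]=-\frac{d}{dt}\big|_{0}(\phi_t)_*Z$: if $\phi_t$ preserves the foliation, then $(\phi_t)_*Z$ is vertical, and so is its derivative.
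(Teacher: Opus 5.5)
The paper states this lemma without proof, treating it as standard, so there is no argument of the paper's to compare yours with. Your cycle of implications is the standard argument, and it is correct. The bracket naturality argument for (1)$\Rightarrow$(3), the coordinate computation of $[X,\partial_{y^j}]$, and the identity $[X,Z]=-\frac{d}{dt}\big|_0(\phi_t)_*Z$ for (2)$\Rightarrow$(3) all check out.

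You are right that (3)$\Rightarrow$(1) genuinely needs connected fibers; the same holds for (2)$\Rightarrow$(1) when leaves are taken to be connected components of fibers. For example, take $M=B\sqcup B$ with $\pi$ the identity on each copy. Then $\mathcal{V}=0$, so every $X$ satisfies (3). But if $X$ equals different fields on the two copies, it is not projectable.

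There is one further hypothesis you did not flag: surjectivity of $\pi$. Your construction produces $Y$ only on the open set $\pi(M)$, and $Y$ need not extend smoothly to $B$. For instance, take the inclusion $(0,1)\hookrightarrow\mathbb{R}$ and $X=x^{-1}\partial_x$. Conditions (2) and (3) hold trivially, but no smooth vector field on $\mathbb{R}$ restricts to $X$.

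So the lemma, and your proof, should be read for surjective submersions with connected fibers. That is how the rest of the paper uses it: $\pi$ is surjective there, and the fibers are identified with the leaves of $\mathcal{F}_\pi$. Alternatively, read it with projectability understood locally, as you propose.
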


Let $g$ be a semi-Riemannian metric on $M$. A \emph{basic vector field} of a foliation $\mathcal{F}$ on $M$ is a horizontal and foliated vector field. The set of basic vector fields of $\pi$ is denoted by $\mathfrak{X}_b(M)$.

\begin{lemma}\label{lemma1}
Let $(M,g)$ be a semi-Riemannian manifold and let $\pi:M \to B$ be a surjective submersion. If the horizontal distribution $\mathcal{H}$ is locally generated by basic vector fields, then there exists a smooth singular distribution $\mathcal{D}$, locally finitely generated on $B$, such that $\mathcal{D}_{\pi(p)} = d\pi(\mathcal{H}_p) = d\pi(\mathcal{A}_p)$ for every $p \in M$.
\end{lemma}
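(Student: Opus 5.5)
The proof has three steps: check the two descriptions of the space agree, check the definition is independent of the chosen point in the fiber, and produce local generators on $B$.

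\textbf{The two descriptions agree.} Since $\mathcal{A}_p=\mathcal{V}_p+\mathcal{H}_p$ and $\mathcal{V}_p=\ker d\pi_p$, we have $d\pi(\mathcal{A}_p)=d\pi(\mathcal{H}_p)$ immediately.

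\textbf{Independence of the point in the fiber.} I would set $\mathcal{D}_x:=d\pi(\mathcal{H}_p)$ for any $p\in\pi^{-1}(x)$; surjectivity of $\pi$ ensures every $x$ is covered. The main obstacle is showing this does not depend on the choice of $p$ in the fiber. Fibers may be disconnected, so I would first argue on a connected component of the fiber and then handle the rest.

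\emph{Along a connected fiber.} Fix $p$ and choose a neighborhood $W$ of $p$ on which $\mathcal{H}|_W$ is spanned by basic fields $X_1,\dots,X_m$. Each $X_i$ is horizontal and foliated, hence projectable by Lemma~\ref{campoprojetavel}. So there are vector fields $Y_i$ on $\pi(W)$ (open, since $\pi$ is a submersion) with $d\pi(X_i)=Y_i\circ\pi$. Then for every $q\in W$,
\[
d\pi(\mathcal{H}_q)=\operatorname{span}\{Y_1(\pi(q)),\dots,Y_m(\pi(q))\}.
\]
Hence $q\mapsto d\pi(\mathcal{H}_q)$ is constant on $W\cap\pi^{-1}(x)$. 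Thus the map $q\mapsto d\pi(\mathcal{H}_q)$ is locally constant along each fiber, and therefore constant on each connected component of $\pi^{-1}(x)$.

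\emph{Across components.} If the fiber is disconnected, constancy on components is not enough. In that case I would interpret the lemma locally, with $\pi$ restricted to a neighborhood where fibers are connected. Alternatively, one may note that the statement only needs $\mathcal{D}$ defined consistently where it is used. I would flag explicitly that either connected fibers or a local formulation is assumed. If a global argument is required, the first thing I would try is the one from the classical theory: use the transnormality/basic-field hypothesis to transport along horizontal curves. I do not expect this to be needed for the local statement.

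\textbf{Local finite generation and smoothness.} Given $x\in B$, pick $p\in\pi^{-1}(x)$ and $W$, $X_i$, $Y_i$ as above, and set $O=\pi(W)$. For every $y\in O$, choose $q\in W$ with $\pi(q)=y$. By the previous step,
\[
\mathcal{D}_y=d\pi(\mathcal{H}_q)=\operatorname{span}\{Y_i(y)\}.
\]
Thus $\mathcal{D}|_O=\operatorname{span}\{Y_1,\dots,Y_m\}$, so $\mathcal{D}$ is locally finitely generated. It is also smooth: any $v\in\mathcal{D}_y$ can be written $v=\sum c_iY_i(y)$, and extending with constant coefficients times a bump function gives a smooth section $Z=\sum c_i\,\phi\,Y_i$ of $\mathcal{D}$ with $Z(y)=v$.

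Note that $\dim\mathcal{D}_y$ may vary, reflecting the degeneracy of the leaves, since $d\pi(\mathcal{H}_q)\cong\mathcal{H}_q/\mathcal{R}_q$. This is why $\mathcal{D}$ is only a singular distribution.
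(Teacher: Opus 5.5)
Your proposal is correct and follows the paper's route. The paper projects local basic generators $\tilde X_i$ of $\mathcal{H}$ to fields $X_i$ on $B$ and declares them local generators of $\mathcal{D}$; your check that $q\mapsto d\pi(\mathcal{H}_q)$ is locally constant along fibers is exactly the well-definedness step that this one-line argument leaves implicit.

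Your caveat about disconnected fibers is warranted, and you should not try to remove it by transporting along horizontal curves, because the lemma tacitly needs connected fibers or a local formulation. Take the disjoint union of $(\mathbb{R}^2,dx^2-dy^2)$ and $(\mathbb{R}^2,2\,dx\,dy)$, each projected by $(x,y)\mapsto y$. It satisfies all the hypotheses, and is even transnormal, yet $d\pi(\mathcal{H}_p)=T\mathbb{R}$ on the first component and $\{0\}$ on the second; a connected variant also exists, on a U-shaped domain whose arms are joined through a region with connected fibers. The paper makes the same assumption when it later argues ``constant along the connected fibers'' in the proof of Theorem~\ref{T}.
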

\begin{proof}
Let $\{\tilde X_1, \dots, \tilde X_l\}$ be basic vector fields that locally generate $\mathcal{H}$. Define $X_i$ by $d\pi \circ \tilde X_i = X_i \circ \pi$. Then the set $\{X_i\}$ generates a smooth singular distribution $\mathcal{D}$ satisfying $d\pi(\mathcal{H}) = \mathcal{D} \circ \pi$.
\end{proof}

If the propagation speed $\lVert \nabla \varphi_t\rVert_h$ is constant on each wave front in a Riemannian manifold $(M,h)$, the function $\varphi_t$ is an \emph{$h$-transnormal function}. Consequently, the integral curves of $\nabla \varphi_t$ are pregeodesics orthogonal to the wave fronts. If, additionally, the Laplacian $\Delta \varphi_t$ is constant along each front, the function is said to be an \emph{isoparametric function} \cite{cartan1938familles}.

A singular foliation $\mathcal{F}$ on a semi-Riemannian manifold $(M,g)$ is \emph{$g$-transnormal} when it satisfies the \emph{transnormal condition}: if $\gamma:I\to M$ is a geodesic such that $\gamma'(t_{0})\in \mathcal{H}$ for some $t_0\in I$, then $\gamma'(t)\in \mathcal{H}$ for all $t\in I$. In this case, $\gamma$ is called an \emph{$\mathcal{F}$-horizontal geodesic}. 

\begin{example}\label{folheacaohomogenea}
The orbits of an isometric action on a semi-Riemannian manifold constitute a singular transnormal foliation, referred to as a \emph{homogeneous foliation}. The proof of this fact is completely analogous to the Riemannian case; see \cite{alexandrino2015lie}.
\end{example}

\begin{example}\label{translightlikeone} 
In Lorentzian geometry, Galloway showed that any null hypersurface admits a geodesically invariant tangent null vector field \cite[Prop.~3.1]{galloway2004null}; hence, any codimension-one lightlike foliation is transnormal.
\end{example}

\begin{example}\label{ExampleWaveFront}
Let $\varphi$ be a wave that propagates in a Riemannian manifold $(S,h)$ with non-zero constant speed along each wave front, meaning there exists a function $b : I \to \mathbb{R}$ such that $h(\nabla \varphi_t, \nabla \varphi_t) = (b \circ \varphi_t)^2$. Now consider the spacetime $M = (\mathbb{R} \times S, g)$ where $g = -dt^2 + h$. Note that $U = \partial_t$ is an irrotational observer vector field and $U^{\perp}(t,p) = \{0\} \times T_p S$. Define $N = (b \circ \varphi_t)U + \nabla \varphi_t$, which is a lightlike vector field. We verify that the distribution $\mathcal{D} = N^{\perp}$ is integrable on $M_r = \{(t,p) \in M : \nabla \varphi_t(p) \neq 0\}$, consequently inducing a codimension-one lightlike transnormal foliation.

First, note that $\mathcal{D} \cap U^{\perp} = \operatorname{Ker} d\varphi_t$, as these spaces have the same dimension, and if $u \in \mathcal{D} \cap U^{\perp}$, then
\begin{align*}
    0 = g(N, u) = g((b \circ \varphi_t)U + \nabla \varphi_t, u) = h(\nabla \varphi_t, u) = d\varphi_t(u),
\end{align*}
implying that $u \in \operatorname{Ker} d\varphi_t$. In particular, $\mathcal{D} \cap U^{\perp}$ is integrable on $M_r$. Thus, by Proposition~\ref{criteriointegrabilidade}, it suffices to prove that $\nabla_N N = \lambda N$ for some function $\lambda$ on $M_r$. 

For a fixed $t$, the function $\varphi_t$ is $h$-transnormal, which implies that the integral curves of $\nabla \varphi_t$ are $h$-pregeodesics; hence, there exists a function $\rho_t$ on $S$ such that
\begin{equation*}
    \nabla^h_{\nabla \varphi_t} \nabla \varphi_t = \rho_t \nabla \varphi_t.
\end{equation*}
On the other hand, one verifies that $g(\nabla_N N, N) = 0$, so in particular $\nabla_N N \in \mathfrak{X}(\mathcal{D})$. To complete the proof, it remains to show that $g(\nabla_N N, X) = 0$ for every $X \in \mathfrak{X}(\mathcal{D} \cap U^{\perp})$. Observe that $X = (0, \hat{X}) \in \mathfrak{X}(\mathcal{D})$ with $\hat{X} \in \operatorname{Ker}(d\varphi_t)$. Since $\nabla_N N = \theta U + \nabla^h_{\nabla \varphi_t} \nabla \varphi_t$ for some function $\theta$, we have
\begin{equation*}
    g(\nabla_N N, X) = h(\nabla^h_{\nabla \varphi_t} \nabla \varphi_t, \hat{X}) = \rho_t h(\nabla \varphi_t, \hat{X}) = \rho_t d\varphi_t(\hat{X}) = 0.
\end{equation*}
\end{example}

Let $\nabla$ be an affine connection on $M$ and $\mathcal{D}$ a smooth regular distribution. We say that $\mathcal{D}$ is \emph{geodesically $\nabla$-invariant} if it is preserved by the flow of the geodesic spray of $\nabla$. More precisely,  
\[
\varphi^G_t(\mathcal{D}_p) = \mathcal{D}_{\pi \circ \varphi^G_t(p)},
\]  
where $G$ is the geodesic spray of $\nabla$ and $\varphi^G$ is its flow, or equivalently, if $\gamma$ is a geodesic with $\gamma'(0) \in \mathcal{D}$, then $\gamma'(t) \in \mathcal{D}$ for any $t$. Lewis' criterion provides a practical way to verify this property. 

\begin{theorem}[Lewis' Criterion, \cite{lewis1998affine}]\label{lewis}
Let $\nabla$ be an affine connection on $TM$, and let $\mathcal{D}$ be a smooth regular distribution. Then, $\mathcal{D}$ is geodesically $\nabla$-invariant if and only if  
\[
\nabla_X Y + \nabla_Y X \in \mathfrak{X}(\mathcal{D})
\]
for all $X, Y \in \mathfrak{X}(\mathcal{D})$.  
\end{theorem}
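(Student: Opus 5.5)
Lewis' criterion is a characterization of geodesic invariance of a smooth regular distribution for an arbitrary affine connection. For the forward implication we use the geodesic characterization. For the converse we extend a suitable ODE through the geodesic spray. The natural setting is a local frame. Near a point $p$, choose smooth vector fields $X_1,\dots,X_k$ spanning $\mathcal{D}$ and complete them to a local frame $X_1,\dots,X_n$. Let $\alpha^{k+1},\dots,\alpha^n$ be the dual one-forms vanishing on $\mathcal{D}$, so that $v\in\mathcal{D}$ iff $\alpha^a(v)=0$ for $a>k$. The symmetric product $\langle X:Y\rangle=\nabla_XY+\nabla_YX$ is $C^\infty$-bilinear up to derivative terms:
\[
\langle fX:hY\rangle=fh\langle X:Y\rangle+f(Xh)Y+h(Yf)X .
\]
Hence the criterion holds for all sections of $\mathcal{D}$ iff it holds for the frame fields $X_i,X_j$ with $i,j\le k$.

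\emph{Necessity.} Assume $\mathcal{D}$ is geodesically invariant. Fix $q$ and $u,w\in\mathcal{D}_q$, and consider the geodesic $\gamma_s$ with $\gamma_s'(0)=u+sw$. By invariance, $\alpha^a(\gamma_s'(t))=0$ for all $t$. Differentiate at $t=0$ and use $\nabla_{\gamma'}\gamma'=0$. Write $\gamma'=\sum c^iX_i$ along $\gamma$ with $c^a\equiv0$ for $a>k$. Then $0=\nabla_{\gamma'}\gamma'=\sum_i \dot c^iX_i+\sum_{i,j\le k}c^ic^j\nabla_{X_i}X_j$. Taking $\alpha^a$ gives $\sum_{i,j\le k}c^ic^j\alpha^a(\nabla_{X_i}X_j)=0$ at $q$. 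This holds for every $c\in\mathbb{R}^k$ (all vectors of $\mathcal{D}_q$). Polarizing, the symmetric part vanishes: $\alpha^a(\nabla_{X_i}X_j+\nabla_{X_j}X_i)=0$. Therefore $\langle X_i:X_j\rangle\in\mathcal{D}$, and by the bilinearity remark the criterion holds for all sections.

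\emph{Sufficiency.} Assume the criterion. Let $\gamma$ be a geodesic with $\gamma'(0)\in\mathcal{D}$, and write $\gamma'=\sum_{i\le k}c^iX_i+\sum_{a>k}c^aX_a$ in the frame. The geodesic equation $\sum\dot c^lX_l+\sum_{i,j}c^ic^j\nabla_{X_i}X_j=0$ projected by $\alpha^a$ gives
\[
\dot c^a=-\sum_{i,j\le k}c^ic^j\alpha^a(\nabla_{X_i}X_j)-\sum_{b>k}c^b L^a_b(t),
\]
where $L^a_b$ collects the terms containing at least one factor $c^b$ with $b>k$; these are linear in $(c^b)$ with coefficients smooth along $\gamma$. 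The first sum equals $-\tfrac12\sum c^ic^j\alpha^a(\langle X_i:X_j\rangle)=0$ by hypothesis. So $(c^a)_{a>k}$ satisfies a linear homogeneous ODE along the known curve $\gamma$, with zero initial data. By uniqueness, $c^a\equiv0$ on the frame's domain.

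The main obstacle is globalizing along $\gamma$ and handling the reduction cleanly. Regarding $\gamma$ as already given and making the ODE linear in the transverse components is the key trick. Uniqueness then follows from linear ODE theory rather than comparing geodesics. To cover the whole interval, take the maximal subinterval where $\gamma'\in\mathcal{D}$. It is closed because $\mathcal{D}$ is a closed subbundle, and open by the local frame argument at its endpoints. Connectedness then gives all of $I$.
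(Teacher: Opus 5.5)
The paper does not prove this statement; it quotes it from Lewis and uses it only through Corollary~\ref{corLewis}. Your argument is a correct, self-contained proof of it.

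\emph{Necessity.} Every vector of $\mathcal{D}_q$ is an initial velocity of some geodesic. Hence the quadratic form $c\mapsto\sum_{i,j\le k}c^ic^j\alpha^a(\nabla_{X_i}X_j)$ vanishes identically, and polarization kills its symmetric part. This step is fine.

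\emph{Sufficiency.} You treat the already-known components of $\gamma'$ as coefficients. Then the transverse components $(c^a)_{a>k}$ satisfy a linear homogeneous ODE with zero initial data, so they vanish. This step is also fine, including your regrouping of the mixed terms as $\sum_{b>k}c^bL^a_b(t)$.

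A common alternative regards $\mathcal{D}$ as a closed embedded submanifold of $TM$. One shows that geodesic invariance is equivalent to the geodesic spray being tangent to it, and in adapted frame coordinates tangency is exactly your symmetric condition. That route needs the lemma ``integral curves of a vector field tangent to a closed submanifold stay in it.'' Your linear-ODE device replaces that lemma with Gr\"onwall-type uniqueness, which is slightly more elementary and avoids working on $TM$ altogether.

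Two small points are worth making explicit.
\begin{enumerate}
\item The hypothesis is stated for global sections of $\mathcal{D}$. To apply it to the local frame fields, multiply them by a bump function equal to $1$ near the point. This is legitimate because $\nabla_XY+\nabla_YX$ at a point depends only on the germs of $X$ and $Y$ there.
\item In the globalization, the set $\{t\in I:\gamma'(t)\in\mathcal{D}\}$ is closed because $\mathcal{D}$ is a closed subset of $TM$. It is open at each of its points, not just at endpoints, by restarting your local argument at that time. Connectedness of $I$ then finishes the proof directly.
\end{enumerate}
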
  

\begin{corollary}\label{corLewis}  
Let $\mathcal{F}$ be a smooth regular foliation on a semi-Riemannian manifold $(M,g)$. Then, $\mathcal{F}$ is $g$-transnormal if and only if the horizontal distribution $\mathcal{H}$ satisfies Lewis' criterion.
\end{corollary}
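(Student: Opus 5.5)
The plan is to reduce the corollary to Theorem~\ref{lewis} applied to the Levi-Civita connection $\nabla$ of $g$ and the distribution $\mathcal{D}=\mathcal{H}$. Lewis' criterion needs a smooth regular distribution, and Lemma~\ref{SuavidadeDistribuicaoHorizontal} supplies exactly that: since $\mathcal{F}$ is a regular foliation of dimension $k$, the horizontal distribution $\mathcal{H}=\mathcal{V}^{\perp}$ is smooth and regular of dimension $n-k$. Note that this holds even when $\mathcal{V}\cap\mathcal{H}\neq\{0\}$, because $\mathcal{V}^\perp$ is the kernel of a bundle map of constant rank $k$, by nondegeneracy of $g$.

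Next I will check that $g$-transnormality of $\mathcal{F}$ is literally the statement that $\mathcal{H}$ is geodesically $\nabla$-invariant. By definition, $\mathcal{F}$ is $g$-transnormal when every geodesic $\gamma$ with $\gamma'(t_0)\in\mathcal{H}$ for some $t_0$ satisfies $\gamma'(t)\in\mathcal{H}$ for all $t$ in its domain. Geodesic invariance asks the same thing with $t_0=0$, for geodesics of the spray of $\nabla$, which are exactly the $g$-geodesics.

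The two base-point conventions agree by reparametrizing: if $\gamma'(t_0)\in\mathcal{H}$, then $\sigma(s)=\gamma(s+t_0)$ is a geodesic with $\sigma'(0)\in\mathcal{H}$. Conversely, the condition at $t_0=0$ is a special case. The flow formulation $\varphi^G_t(\mathcal{D}_p)=\mathcal{D}_{\pi\circ\varphi^G_t(p)}$ agrees with the curve formulation, as the paper already notes, so I need only the curve version. One small point is that $\varphi^G_t$ is only defined where the geodesic exists. Both formulations are understood on the maximal domain, so no completeness hypothesis is needed.

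Combining the two steps, $\mathcal{F}$ is $g$-transnormal if and only if $\mathcal{H}$ is geodesically $\nabla$-invariant. By Theorem~\ref{lewis}, this holds if and only if $\nabla_XY+\nabla_YX\in\mathfrak{X}(\mathcal{H})$ for all $X,Y\in\mathfrak{X}(\mathcal{H})$. There is no real obstacle; the only point needing care is verifying the hypotheses of Lewis' criterion, which is handled by Lemma~\ref{SuavidadeDistribuicaoHorizontal}.
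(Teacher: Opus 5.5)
Your proposal is correct and follows the paper's own argument. The paper likewise identifies $g$-transnormality with geodesic $\nabla$-invariance of $\mathcal{H}$ for the Levi-Civita connection and then invokes Theorem~\ref{lewis}; your additional checks (smoothness and regularity of $\mathcal{H}$ via Lemma~\ref{SuavidadeDistribuicaoHorizontal}, and the shift $t_0\mapsto 0$) just make explicit hypotheses that the paper's one-line proof leaves implicit.
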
  
\begin{proof}  
Note that $\mathcal{F}$ is $g$-transnormal if and only if the horizontal distribution $\mathcal{H}$ is geodesically $\nabla$-invariant, where $\nabla$ is the Levi-Civita connection associated with $g$. Thus, the result follows directly from Theorem~\ref{lewis}.
\end{proof}

\begin{proposition}\label{tranHLieD}
Let $\mathcal{F}$ be a regular foliation. Then, $\mathcal{F}$ is $g$-transnormal if and only if  
\[
\mathcal{L}_U g(X,Y) = 0
\]
for all $X, Y \in \mathfrak{X}(\mathcal{H})$ and $U \in \mathfrak{X}(\mathcal{V})$.  
\end{proposition}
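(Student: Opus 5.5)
We reduce the statement to Corollary~\ref{corLewis}: $\mathcal{F}$ is $g$-transnormal if and only if $\nabla_XY+\nabla_YX\in\mathfrak{X}(\mathcal{H})$ for all $X,Y\in\mathfrak{X}(\mathcal{H})$. Since $\mathcal{H}=\mathcal{V}^{\perp}$ and $\mathcal{V}$ is a smooth regular distribution, a vector field $W$ lies in $\mathfrak{X}(\mathcal{H})$ exactly when $g(W,U)=0$ for every $U\in\mathfrak{X}(\mathcal{V})$. Here we use that every vector of $\mathcal{V}_p$ extends to a smooth vertical field. So Lewis' condition is equivalent to
\[
g(\nabla_XY+\nabla_YX,\,U)=0 \quad\text{for all } X,Y\in\mathfrak{X}(\mathcal{H}),\ U\in\mathfrak{X}(\mathcal{V}).
\]

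The key step is to express this quantity through the Lie derivative. For any vector fields,
\[
\mathcal{L}_Ug(X,Y)=U g(X,Y)-g([U,X],Y)-g(X,[U,Y]).
\]
Using the torsion-free identity $[U,X]=\nabla_UX-\nabla_XU$ and metric compatibility, this becomes
\[
\mathcal{L}_Ug(X,Y)=g(\nabla_XU,Y)+g(X,\nabla_YU).
\]
For $X,Y$ horizontal and $U$ vertical we have $g(U,Y)=0$ and $g(U,X)=0$ identically. Differentiating these gives $g(\nabla_XU,Y)=-g(U,\nabla_XY)$ and $g(X,\nabla_YU)=-g(\nabla_YX,U)$. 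Hence
\[
\mathcal{L}_Ug(X,Y)=-g(\nabla_XY+\nabla_YX,\,U).
\]

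Combining the two displays, $\mathcal{L}_Ug(X,Y)=0$ for all such $X,Y,U$ if and only if $\nabla_XY+\nabla_YX$ is $g$-orthogonal to all vertical fields, that is, lies in $\mathfrak{X}(\mathcal{H})$. By Corollary~\ref{corLewis} this is equivalent to $g$-transnormality.

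The only delicate point is the passage from pointwise orthogonality to membership in $\mathfrak{X}(\mathcal{H})$ when leaves may be degenerate. For this we need $\mathcal{H}$ to be smooth and regular, which holds by Lemma~\ref{SuavidadeDistribuicaoHorizontal}, so that Lewis' criterion applies. We also need that orthogonality to $\mathcal{V}$ characterizes $\mathcal{H}$ pointwise, which is the definition of $\mathcal{H}$; no nondegeneracy is required. The rest is the routine computation above.
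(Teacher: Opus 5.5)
Your proposal is correct and takes the same approach as the paper. It performs the same computation $\mathcal{L}_Ug(X,Y)=g(\nabla_XU,Y)+g(X,\nabla_YU)=-g(\nabla_XY+\nabla_YX,U)$ and then appeals to Corollary~\ref{corLewis}. Your extra remark is also sound: by smoothness of $\mathcal{V}$, orthogonality to all vertical fields characterizes $\mathfrak{X}(\mathcal{H})$ with no nondegeneracy needed, which makes explicit a step the paper leaves implicit.
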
  
\begin{proof}  
Computing directly, we obtain  
\begin{align*}  
    \mathcal{L}_U g(X,Y) &= U g(X,Y) - g([U,X],Y) - g([U,Y],X) \\  
    &= g(\nabla_U X, Y) + g(X, \nabla_U Y) - g(\nabla_U X, Y) \\  
    &\quad + g(\nabla_X U, Y) - g(\nabla_U Y, X) + g(\nabla_Y U, X) \\  
    &= g(\nabla_X U, Y) + g(\nabla_Y U, X) \\  
    &= X g(U,Y) - g(\nabla_X Y, U) + Y g(X,U) - g(\nabla_Y X, U) \\  
    &= - g(\nabla_X Y + \nabla_Y X, U).  
\end{align*}  
Thus, the result follows from Corollary~\ref{corLewis}.  
\end{proof}  

\section{Stationary Transnormal Foliations in a Spacetime} 

Let $U$ be an observer in a spacetime $(M,g)$. A codimension-one lightlike distribution $\mathcal{D}$ on $M$ coincides with the distribution orthogonal to a lightlike vector field $N = U + S$, where $S$ is a unit spacelike vector field orthogonal to $U$. If $U$ is irrotational (or synchronizable), then $\mathcal{D}$ is integrable if and only if $\mathcal{D} \cap U^{\perp}$ is also integrable, according to \cite[Theorem 7]{bolosgeometric}. In this case, the foliation generated by $\mathcal{D} \cap U^{\perp}$ is interpreted as the wave fronts observed by $U$, and $S$ is its propagation velocity vector field. 

A wave $\varphi$ is called \emph{stationary} if its wave fronts do not move over time. This motivates the next definition. Let $(M,g)$ be a semi-Riemannian manifold and let $U$ be a nowhere vanishing vector field. A foliation $\mathcal{F} = \{L_p\}$ is said to be \emph{$U$-stationary} if the distribution 
\[
\mathcal{V}^{\mathcal{F}}\cap U^{\perp}=\{T_pL\cap U^\perp(p)\}_{p\in M}
\]
is invariant under the flow of $U$. If $\mathcal{V}^{\mathcal{F}}\cap U^{\perp}$ is integrable, $U$ is a foliated vector field of its integral foliation.  

\begin{proposition}
Let $U$ be an irrotational observer field in a spacetime, and $\mathcal{F}$ be a regular foliation such that $\mathcal{V}^{\mathcal{F}}\cap U^{\perp}$ is an integrable regular distribution. Then $\mathcal{F}$ is $U$-stationary if and only if the distribution
\[
(\mathcal{V}^{\mathcal{F}}\cap U^{\perp}) \;\oplus\; \mathrm{span}\{U\}
\]
is integrable. 
\end{proposition}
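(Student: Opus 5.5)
Write $\mathcal{W}:=\mathcal{V}^{\mathcal{F}}\cap U^{\perp}$ and $\mathcal{E}:=\mathcal{W}\oplus\operatorname{span}\{U\}$. Since $U$ is timelike and $\mathcal{W}\subset U^{\perp}$, the sum is direct and $\mathcal{E}$ is a smooth regular distribution of rank $\operatorname{rank}\mathcal{W}+1$. We will reduce both directions to a bracket condition and then apply Frobenius. $U$-stationarity means the flow $\phi_t$ of $U$ satisfies $d\phi_t(\mathcal{W}_p)=\mathcal{W}_{\phi_t(p)}$. For a regular distribution this is equivalent to $[U,X]\in\mathfrak{X}(\mathcal{W})$ for every $X\in\mathfrak{X}(\mathcal{W})$. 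One direction differentiates $d\phi_{-t}X_{\phi_t(p)}$ at $t=0$. The other is the standard argument with a local frame $X_1,\dots,X_m$ of $\mathcal{W}$: if $[U,X_i]=\sum_j a_{ij}X_j$, then the pulled-back fields $\phi_t^*X_i$ satisfy a linear ODE whose solution stays in the span of the $X_j$. This is the Lie-derivative characterisation of invariance, as in Lemma~\ref{campoprojetavel} (3).

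\emph{($\Rightarrow$)} Assume $\mathcal{F}$ is $U$-stationary. For $X,Y\in\mathfrak{X}(\mathcal{W})$ we have $[X,Y]\in\mathfrak{X}(\mathcal{W})$ by the hypothesised integrability of $\mathcal{W}$ (Frobenius). We also have $[U,X]\in\mathfrak{X}(\mathcal{W})$ by stationarity. Sections of $\mathcal{E}$ are locally of the form $X+fU$ with $X\in\mathfrak{X}(\mathcal{W})$, so brackets of sections expand into these two types plus terms proportional to $X$ or $U$. Hence $\mathcal{E}$ is involutive, and therefore integrable by Frobenius.

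\emph{($\Leftarrow$)} Assume $\mathcal{E}$ is integrable. For $X\in\mathfrak{X}(\mathcal{W})$, involutivity gives $[U,X]=Z+hU$ with $Z\in\mathfrak{X}(\mathcal{W})$, and it remains to show $h=0$. Since $g(X,U)=0$ and $g(Z,U)=0$,
\[
h\,g(U,U)=g([U,X],U)=g(\nabla_UX,U)-g(\nabla_XU,U).
\]
The second term vanishes because $g(U,U)=-1$ is constant. The first equals $U g(X,U)-g(X,\nabla_UU)=-g(X,\nabla_UU)$. So the obstacle is to show that the acceleration $\nabla_UU$ is orthogonal to $\mathcal{W}$; this is the main step.

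Here I would use the irrotational hypothesis together with the integrability of $\mathcal{E}$. If $U$ were geodesic, $h=0$ would follow at once, but in general it need not be, so the plan is as follows.

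1. Use that $U^{\perp}$ is integrable, so $g([X,Y],U)=0$ for $X,Y\in U^{\perp}$. Written out, this says the map $X\mapsto\nabla_XU$ restricted to $U^{\perp}$ is symmetric.

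2. Rewrite $-g(X,\nabla_UU)=g([U,X],U)$ and try to obtain $h=0$ directly from the projected bracket.

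3. If step 2 fails, I would normalize the frame by choosing $X\in\mathfrak{X}(\mathcal{W})$ that are Lie-transported along $U$ modulo $U$. Since $\mathcal{E}$ is integrable, the flow of $U$ preserves $\mathcal{E}$ leafwise. One then checks that the component $h$ can be absorbed: replacing $U$ by a rescaled field does not change $\mathcal{W}$, but the invariance must be for $U$ itself.

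The honest risk is that this direction genuinely needs $h$ to vanish, or that the intended definition of stationarity is read modulo $U$, i.e.\ invariance of the foliation of $\mathcal{W}$ inside leaves of $\mathcal{E}$. I would settle that interpretation first and, if needed, prove the equivalence with invariance understood as $[U,\mathfrak{X}(\mathcal{W})]\subset\mathfrak{X}(\mathcal{E})$.
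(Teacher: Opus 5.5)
Your ($\Rightarrow$) direction is correct and is the same as the paper's: flow-invariance of $\mathcal{W}$ is equivalent to $[U,\mathfrak{X}(\mathcal{W})]\subset\mathfrak{X}(\mathcal{W})$, and together with involutivity of $\mathcal{W}$ this gives involutivity of $\mathcal{E}$. In ($\Leftarrow$) your reduction is also correct: writing $[U,X]=Z+hU$ gives $h=g(X,\nabla_UU)$. Steps 1--3, however, cannot make $h$ vanish. First, $h$ is $C^\infty(M)$-linear in $X$, so no normalization of the frame absorbs it. Second, rescaling $U$ changes the flow. Third, irrotationality only constrains $\nabla U$ on $U^{\perp}$ (your step 1), not the acceleration $\nabla_UU$.

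In fact the missing step is false, and so is ($\Leftarrow$) as stated. Take the flat Rindler wedge $M=\{(t,x,y)\in\mathbb{R}^3 : x>0\}$ with $g=-x^2\,dt^2+dx^2+dy^2$. Let $U=x^{-1}\partial_t$, a unit future-directed observer with $U^{\perp}=\operatorname{span}\{\partial_x,\partial_y\}$ integrable, hence irrotational. Let $\mathcal{F}$ be the foliation by the planes $\{y=\mathrm{const}\}$. Then $\mathcal{W}=\operatorname{span}\{\partial_x\}$ is regular and integrable, and $\mathcal{E}=\operatorname{span}\{\partial_t,\partial_x\}=\mathcal{V}^{\mathcal{F}}$ is integrable. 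But
\[
[U,\partial_x]=x^{-2}\partial_t=x^{-1}\,U\notin\mathfrak{X}(\mathcal{W}).
\]
Equivalently, the flow $\phi_s(t,x,y)=(t+s/x,x,y)$ sends $\partial_x$ to $\partial_x-(s/x^{2})\partial_t$. Here $h=x^{-1}=g(\partial_x,\nabla_UU)$ is exactly the Rindler acceleration. So $\mathcal{F}$ is not $U$-stationary.

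The paper's proof gets past this point only through an incorrect identity. It claims $g([Z,U],U)=g(\nabla_ZU,U)-g(\nabla_UZ,U)=2\,g(\nabla_ZU,U)=0$ for $Z\in\mathfrak{X}(U^{\perp})$, i.e.\ that the flow of $U$ preserves $U^{\perp}$. Since $g(Z,U)=0$, one actually has $-g(\nabla_UZ,U)=g(Z,\nabla_UU)$, so your computation is the right one. Consequently $U^{\perp}$ is invariant under the flow of $U$ exactly when $U$ is geodesic.

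What your argument does prove, verbatim and without using irrotationality, is a corrected statement: $\mathcal{F}$ is $U$-stationary if and only if $\mathcal{E}$ is integrable and $\nabla_UU\perp\mathcal{W}$. Stationarity forces $h=0$, and conversely $h=0$ together with involutivity of $\mathcal{E}$ gives stationarity. In particular the proposition holds for geodesic observers, such as the comoving observers of a Robertson--Walker spacetime. It also holds under your ``modulo $U$'' reading of stationarity, but not in the generality stated.
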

\begin{proof}  
Since $U$ is a unit timelike vector field, we have $g(\nabla_Z U, U) = 0$ for any $Z \in \mathfrak{X}(U^\perp)$. Hence
\[
g([Z, U], U)
= g(\nabla_Z U, U) - g(\nabla_U Z, U)
= 2\,g(\nabla_Z U, U)
= 0,
\]
showing that $[U,Z] \in \mathfrak{X}(U^\perp)$. Now, because $\mathcal{V}^{\mathcal{F}}\cap U^{\perp}$ is Lie bracket closed, the distribution $(\mathcal{V}^{\mathcal{F}}\cap U^{\perp}) \oplus \mathrm{span}\{U\}$ is integrable if and only if $[U,Z] \in \mathfrak{X}(\mathcal{V}^{\mathcal{F}}\cap U^{\perp})$ for every $Z \in \mathfrak{X}(\mathcal{V}^{\mathcal{F}}\cap U^{\perp})$. The desired conclusion now follows immediately from Lemma~\ref{campoprojetavel}.
\end{proof}

In \cite[Theorem 8]{bolosgeometric}, Bolos showed that a codimension-one lightlike foliation $\mathcal{F}_{\mathcal{D}}$ generated by the lightlike vector field $N = U + X$ is $U$-stationary if and only if the opposite lightlike distribution $\mathcal{D}^- = (U - X)^\perp$ is integrable. Here, the foliations $\mathcal{F}_{\mathcal{D}}$ and $\mathcal{F}_{\mathcal{D}^-}$ propagate in opposite null directions $X$ and $-X$, respectively, which is a hallmark of a stationary wave. The following result demonstrates that stationary foliations inherently preserve this geometric behavior, as an irrotational observer perceives a transnormal foliation on the spatial rest space.

\begin{theorem}\label{FTRW}  
Let $(M, g)$ be a semi-Riemannian manifold and let $U \in \mathfrak{X}(M)$ be a vector field such that $g(U,U)$ is nowhere zero and the flow of $U$ consists of conformal maps. Let $\mathcal{F}$ be a $U$-stationary, $g$-transnormal foliation on $M$, and let $S \subset M$ be a hypersurface such that $U(q) \in (T_q S)^\perp$ for each $q \in S$. If $\mathcal{F}_S := \mathcal{F} \cap S$ is a regular foliation, then $\mathcal{F}_S$ is an $h$-transnormal foliation on $(S, h)$, where $h = g|_S$.
\end{theorem}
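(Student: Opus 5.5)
We verify the criterion of Proposition~\ref{tranHLieD} on $(S,h)$. Write $\mathcal{V}_S=\mathcal{V}^{\mathcal{F}_S}$, so $(\mathcal{V}_S)_q=T_qL_q\cap T_qS=\mathcal{V}_q\cap U^{\perp}(q)$ for $q\in S$, since $T_qS=U^\perp(q)$ (here $g(U,U)\neq0$ makes $U^\perp$ non-degenerate of codimension one). Let $\mathcal{H}_S$ be the $h$-orthogonal complement of $\mathcal{V}_S$ in $TS$. The goal is
\[
(\mathcal{L}_Z h)(X,Y)=0\qquad\text{for all } X,Y\in\mathfrak{X}(\mathcal{H}_S),\ Z\in\mathfrak{X}(\mathcal{V}_S).
\]
Since $h=g|_S$ and $X,Y,Z$ are tangent to $S$, we have $(\mathcal{L}_Z h)(X,Y)=(\mathcal{L}_{\tilde Z} g)(\tilde X,\tilde Y)$ along $S$ for any extensions $\tilde X,\tilde Y,\tilde Z$ to a neighbourhood. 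The plan is to choose extensions so that $\tilde Z$ is a section of $\mathcal{V}$ and $\tilde X,\tilde Y$ are sections of $\mathcal{H}$. Proposition~\ref{tranHLieD}, applied to the $g$-transnormal foliation $\mathcal{F}$, then makes the right-hand side vanish.

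\emph{Step 1: the vertical extension.} Extend $Z$ off $S$ by transporting it with the flow $\phi_t$ of $U$, i.e.\ $\tilde Z(\phi_t(q))=d\phi_t(Z(q))$, which is defined near $S$ because $U$ is transversal to $S$. $U$-stationarity says $d\phi_t$ preserves $\mathcal{V}\cap U^\perp$, so $\tilde Z\in\mathfrak{X}(\mathcal{V}\cap U^\perp)\subset\mathfrak{X}(\mathcal{V})$ and $[U,\tilde Z]=0$.

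\emph{Step 2: the horizontal extensions.} This is the main obstacle, because $\mathcal{H}_S$ need not lie in $\mathcal{H}$. At $q\in S$, the space $\mathcal{H}_S(q)$ is the orthogonal complement of $\mathcal{V}\cap U^\perp$ inside $U^\perp$. Decompose $X\in\mathcal{H}_S(q)$ as $X=X^{\mathcal{H}}+X'$, where $X^{\mathcal{H}}\in\mathcal{H}$ and $X'$ is orthogonal to $\mathcal{V}\cap U^\perp$ but paired with the part of $\mathcal{V}$ not in $U^\perp$.

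First case: $\mathcal{V}\subset U^\perp$ at $q$. Then $\mathcal{V}_S=\mathcal{V}$, so $\mathcal{H}_S=\mathcal{H}\cap U^\perp$ and we may take horizontal extensions directly.

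Second case: there is $W\in\mathcal{V}$ with $g(W,U)\neq0$. We may then normalize $\mathcal{V}=(\mathcal{V}\cap U^\perp)\oplus\mathrm{span}\{W\}$, so that $\mathcal{H}=\mathcal{H}'\cap W^\perp$ with $\mathcal{H}':=(\mathcal{V}\cap U^\perp)^\perp\supset\mathcal{H}_S$. Rather than forcing horizontality, we compute $(\mathcal{L}_{\tilde Z}g)(\tilde X,\tilde Y)$ via the identity from the proof of Proposition~\ref{tranHLieD}:
\[
(\mathcal{L}_{\tilde Z}g)(\tilde X,\tilde Y)=-g\bigl(\nabla_{\tilde X}\tilde Y+\nabla_{\tilde Y}\tilde X,\tilde Z\bigr),
\]
which holds whenever $\tilde X,\tilde Y\perp\tilde Z$. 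It therefore suffices to show that the symmetrized derivative of sections of $\mathcal{H}'$ stays orthogonal to $\mathcal{V}\cap U^\perp$.

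\emph{Step 3: using the conformal flow.} Because $\mathcal{L}_U g=2\sigma g$ and $\phi_t$ preserves $\mathcal{V}\cap U^\perp$, the flow also preserves $\mathcal{H}'$ and $U^\perp$. So $\mathcal{H}'$ is spanned near $S$ by $U$, by $\phi_t$-invariant extensions of $\mathcal{H}_S$, and by a horizontal field. The $U$-components are handled by
\[
(\mathcal{L}_{\tilde Z}g)(U,\cdot)=-(\mathcal{L}_U g)(\tilde Z,\cdot)+\text{bracket terms}=-2\sigma g(\tilde Z,\cdot)+0,
\]
using $[U,\tilde Z]=0$. This vanishes against vectors in $\mathcal{H}'$. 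Expanding $(\mathcal{L}_{\tilde Z}g)(\tilde X,\tilde Y)$ bilinearly, with $\tilde X=aU+\xi$, $\tilde Y=bU+\eta$ and $\xi,\eta\in\mathcal{H}$ (obtained by adjusting by multiples of $U$ where $W$-components appear), all cross terms vanish by this computation. The pure horizontal term vanishes by Proposition~\ref{tranHLieD}.

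The expected difficulty is exactly showing that any $X\in\mathcal{H}_S$ can be written, modulo $U$, as a horizontal vector, i.e.\ that $\mathcal{H}_S\subset\mathcal{H}+\mathrm{span}\{U\}$. This holds because
\[
\bigl(\mathcal{H}+\mathrm{span}\{U\}\bigr)^\perp=\mathcal{V}\cap U^\perp=\mathcal{V}_S,
\]
so taking orthogonal complements gives $\mathcal{H}+\mathrm{span}\{U\}=\mathcal{V}_S^{\perp}\supset\mathcal{H}_S$. This uses only non-degeneracy of $g$. Smooth choices of the decomposition follow from Lemma~\ref{SuavidadeDistribuicaoHorizontal}, possibly after restricting to open sets where the rank is constant.
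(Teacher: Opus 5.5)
Your strategy is the paper's. You transport $Z$ by the flow of $U$, so that $\tilde Z\in\mathfrak{X}(\mathcal{V}\cap U^\perp)$ and $[U,\tilde Z]=0$, and you correct a horizontal vector of $\mathcal{F}_S$ by a multiple of $U$ so that it lands in $\mathcal{H}$. Your identity $\mathcal{H}+\mathrm{span}\{U\}=(\mathcal{V}\cap U^\perp)^\perp$ is a neat pointwise replacement for the paper's case analysis.

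\textbf{The gap is in Step 3.} The identity $(\mathcal{L}_{\tilde Z}g)(U,\cdot)=-2\sigma g(\tilde Z,\cdot)$ is false. Write $U^\flat=g(U,\cdot)$. Using $[U,\tilde Z]=0$ and $g(U,\tilde Z)\equiv 0$, one gets
\[
(\mathcal{L}_{\tilde Z}g)(U,W)=\tilde Z\,g(U,W)-g\bigl(U,[\tilde Z,W]\bigr)=dU^\flat(\tilde Z,W)=-(\mathcal{L}_Ug)(\tilde Z,W)+2g(\nabla_{\tilde Z}U,W),
\]
and the last term is not a bracket term. In particular $(\mathcal{L}_{\tilde Z}g)(U,U)=\tilde Z\bigl(g(U,U)\bigr)$.

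Since $U^\flat$ pulls back to zero on $S$, $dU^\flat(Z,W)=0$ at points of $S$ for $W$ tangent to $S$. Substituting $\xi=X-aU$ and $\eta=Y-bU$ into your bilinear expansion therefore gives
\[
(\mathcal{L}_Zh)(X,Y)=-ab\,Z\bigl(g(U,U)\bigr)\qquad\text{on } S.
\]
This need not vanish. As soon as $\mathcal{H}_S\not\subset\mathcal{H}$, some $X\in\mathcal{H}_S$ has $a\neq0$; this happens, e.g., for a codimension-one lightlike $\mathcal{F}$ and timelike $U$, where $\mathcal{H}$ is null but $\mathcal{H}_S$ is spacelike. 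The same term defeats your Step 2 reformulation, since $g(\nabla_UU,\tilde Z)=-\tfrac12\tilde Z\bigl(g(U,U)\bigr)$.

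This is not only a defect of your write-up. The paper's proof drops the same term: when it expands $\widetilde V\bigl(\alpha\beta\,g(U,U)\bigr)$ it keeps only $\widetilde V(\alpha\beta)\,g(U,U)$. The statement as given is in fact false. Take the following data:
\begin{itemize}
\item $M=\mathbb{R}\times\{x,y>0\}$ with $g=-(x^2+y^2)^{-1}dt^2+dx^2+dy^2$, and $U=\partial_t$, which is Killing.
\item $\mathcal{F}$ is the family of level sets of $t-xy$. Their gradient $-(x^2+y^2)\partial_t-y\partial_x-x\partial_y$ is null, so $\mathcal{F}$ is codimension-one lightlike, hence $g$-transnormal.
\item $\mathcal{V}\cap U^\perp=\mathrm{span}\{x\partial_x-y\partial_y\}$ is $t$-independent, so $\mathcal{F}$ is $U$-stationary.
\item $S=\{t=0\}$ is orthogonal to $U$.
\end{itemize}
Yet $\mathcal{F}_S$ is the family of hyperbolas $xy=c$ in the flat quadrant. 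For $Z=x\partial_x-y\partial_y$ and $X=y\partial_x+x\partial_y$ one finds $(\mathcal{L}_Zh)(X,X)=2(y^2-x^2)\neq0$. This is exactly $-ab\,Z(g(U,U))$ with $a=b=-(x^2+y^2)$.

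With Step 3 corrected as above, your argument does prove the theorem under the extra hypothesis that $g(U,U)$ is constant along the leaves of $\mathcal{F}_S$. This covers, e.g., the unit observer of Corollary~\ref{cOrll}, where conformality then forces $U$ to be Killing.
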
 
\begin{proof}
Let $\mathcal{H}^S$ and $\mathcal{V}^S$ denote the horizontal and vertical distributions of $\mathcal{F}_S$, respectively. Given $p \in S$ and $X, Y \in \mathfrak{X}(\mathcal{H}^S)$, our goal is to show that $(\mathcal{L}_V h)_p(X, Y) = 0$ for any $V \in \mathfrak{X}(\mathcal{V}^S)$.

First, assume that $U(p) \notin (T_p L_p)^\perp$, where $L_p$ denotes the leaf of $\mathcal{F}$ passing through $p$. By continuity, there exists an open neighborhood $U_0$ of $p$ in $S$ such that $U(q) \notin (T_q L_q)^\perp$ for all $q \in U_0$; in particular, $\mathcal{F}$ is transverse to $U_0$. Define $W = \{\varphi_t(q) : t \in (-\epsilon, \epsilon) \text{ and } q \in U_0\}$, where $\varphi$ is the flow of $U$. By shrinking $U_0$ or $\epsilon > 0$ if necessary, there exists a vertical vector field $Z \in \mathfrak{X}(\mathcal{V}|_W)$ such that $g(Z, U) \neq 0$ on $W$.
We define vector fields $\widetilde{X}, \widetilde{Y}$ on $W$ via push-forward along the flow $\varphi_t$ of $U$:
\[
\widetilde{X} \circ \varphi_t = d\varphi_t(X), \quad \widetilde{Y} \circ \varphi_t = d\varphi_t(Y).
\]
Let $\alpha, \beta \in C^\infty(W)$ be defined by
\[
\alpha := -\frac{g(\widetilde{X}, Z)}{g(U, Z)} \quad \text{and} \quad \beta := -\frac{g(\widetilde{Y}, Z)}{g(U, Z)}.
\]
Since $U(q) \notin (T_q L_q)^\perp$ for each $q \in U_0$, these functions are well-defined. Construct the corrected vector fields $\widehat{X} = \widetilde{X} + \alpha U$ and $\widehat{Y} = \widetilde{Y} + \beta U$. By construction, $g(\widehat{X}, Z) = 0$. For any $V \in \mathfrak{X}(\mathcal{V}^S)$, let $\widetilde{V}$ be its extension to $W$ satisfying $[\widetilde{V}, U] = 0$. The condition $d\varphi_t(\mathcal{V}^S) \subset \mathcal{V}^{\mathcal{F}}$ guarantees that $\widetilde{V}$ is vertical for $\mathcal{F}$. Since the flow of $U$ is conformal (with factor $\sigma \in C^\infty(W)$), we have
\[
g(\widehat{X}, \widetilde{V}) = g(d\varphi_t X, d\varphi_t V) + \alpha \sigma g(U, V) = \sigma g(X, V) + 0 = 0,
\]
which proves that $\widehat{X}$ and $\widehat{Y}$ are horizontal vector fields with respect to $\mathcal{F}$. Observing that $g(\widetilde{X}, U) = 0$ and $g(\widetilde{Y}, U) = 0$ on $S$ (and by extension on $W$, since $U \perp S$ and the flow is conformal), it follows that $g(\widehat{X}, \widehat{Y}) = g(\widetilde{X}, \widetilde{Y}) + \alpha \beta g(U, U)$. Since $[\widetilde{V}, U] = 0$, the Lie derivative satisfies:
\begin{align*}
\mathcal{L}_{\widetilde{V}} g(\widehat{X}, \widehat{Y}) &= \widetilde{V}\big(g(\widetilde{X}, \widetilde{Y}) + \alpha\beta g(U,U)\big) - g([\widetilde{V}, \widehat{X}], \widehat{Y}) - g([\widetilde{V}, \widehat{Y}], \widehat{X}) \\
&= \mathcal{L}_{\widetilde{V}} g(\widetilde{X}, \widetilde{Y}) + \widetilde{V}(\alpha\beta)g(U,U) - \left( \widetilde{V}(\alpha)\beta g(U,U) + \widetilde{V}(\beta)\alpha g(U,U) \right) \\
&= \mathcal{L}_{\widetilde{V}} g(\widetilde{X}, \widetilde{Y}).
\end{align*}
The $g$-transnormality of $\mathcal{F}$ ensures that $\mathcal{L}_{\widetilde{V}} g(\widehat{X}, \widehat{Y}) = 0$. Restricting back to $S$ (at $t=0$), we conclude that $(\mathcal{L}_V h)_p(X, Y) = 0$.

Now, suppose that $U(p) \in (T_p L_p)^\perp$. If there exists a sequence of points $\{p_n\} \subset S$ converging to $p$ such that $U(p_n) \notin (T_{p_n} L_{p_n})^\perp$ for all $n \in \mathbb{N}$, then by the previous case, $\mathcal{F}_S$ is $h$-transnormal in an open neighborhood of each $p_n$. This implies $(\mathcal{L}_V h)_{p_n}(X,Y) = 0$ for all $n$, and therefore $(\mathcal{L}_V h)_p(X,Y) = 0$ by continuity.

Otherwise, if no such sequence exists, there exists an open neighborhood $U_0$ of $p$ in $S$ such that $U(q) \in (T_q L_q)^\perp$ for all $q \in U_0$. In this case, $\mathcal{V}^S|_{U_0}$ coincides with $\mathcal{V}|_{U_0}$, meaning that the leaves of $\mathcal{F}_S|_{U_0}$ coincide with open subsets of the leaves of $\mathcal{F}$. Consequently, $V$ is the restriction of a vertical vector field $\widetilde{V}$ of $\mathcal{F}$, while $X, Y$ are restrictions of horizontal vector fields $\widetilde{X}, \widetilde{Y}$ of $\mathcal{F}$. Thus, the $g$-transnormality of $\mathcal{F}$ implies \( (\mathcal{L}_V h)_p(X, Y) = (\mathcal{L}_{\widetilde{V}} g)_p(\widetilde{X}, \widetilde{Y}) = 0.
\)
This completes the proof.
\end{proof}

\begin{corollary}\label{cOrll}
Let $U$ be a conformal, irrotational observer vector field on a spacetime $(M,g)$, let $S$ be a restspace of $U$, and let $\mathcal{F}$ be a $U$-stationary, codimension-one lightlike foliation on $M$. Then $\mathcal{F}_S := \mathcal{F} \cap S$ is a codimension-one Riemannian foliation on the Riemannian manifold $(S, h)$, where $h = g|_S$. Furthermore, if $S$ is a closed manifold with strictly positive sectional curvature, then there exists no $U$-stationary, codimension-one lightlike foliation on $M$.
\end{corollary}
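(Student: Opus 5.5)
The corollary follows by feeding Example~\ref{translightlikeone} into Theorem~\ref{FTRW} and then invoking a rigidity theorem for Riemannian foliations.

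\textbf{Step 1: applying Theorem~\ref{FTRW}.} By Example~\ref{translightlikeone} (Galloway), every codimension-one lightlike foliation is $g$-transnormal. The hypotheses of Theorem~\ref{FTRW} hold:
\begin{itemize}
\item $U$ is an observer, so $g(U,U)=-1\neq 0$;
\item $U$ is conformal, so its flow consists of conformal maps;
\item $S$ is a restspace, so $T_qS=U^\perp(q)$ and hence $U(q)\in (T_qS)^\perp$.
\end{itemize}
It remains to check that $\mathcal{F}_S=\mathcal{F}\cap S$ is a regular foliation of codimension one in $S$. The text already notes that a lightlike foliation is transversal to any restspace foliation. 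To see this concretely, let $N$ be the generator. Then $g(N,U)\neq 0$, since a null vector orthogonal to a timelike vector vanishes. Hence $U\notin N^\perp=\mathcal{V}^{\mathcal{F}}$, so $\mathcal{V}^{\mathcal{F}}+T S=TM$.

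By transversality, $\mathcal{V}^{\mathcal{F}}\cap TS=\mathcal{V}^{\mathcal{F}}\cap U^\perp$ is a regular distribution of rank $n-2$. It is integrable, with leaves the connected components of $L\cap S$, by the standard transversal-intersection argument (locally, compose the submersion defining $\mathcal{F}$ with the inclusion $S\hookrightarrow M$). In $S$, of dimension $n-1$, this is codimension one. Theorem~\ref{FTRW} then gives that $\mathcal{F}_S$ is $h$-transnormal. Since $h$ is Riemannian, its leaves are automatically non-degenerate, so $\mathcal{F}_S$ is a classical codimension-one Riemannian foliation.

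\textbf{Step 2: the rigidity argument.} Suppose $S$ is closed with strictly positive sectional curvature, and that such an $\mathcal{F}$ exists. Then $\mathcal{F}_S$ is a regular codimension-one Riemannian foliation on a closed, positively curved manifold.

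I would invoke the Lytchak–Thorbergsson rigidity theorem, as announced in the introduction: a (regular) Riemannian foliation with leaves of positive dimension on a closed manifold of positive curvature cannot exist. The reason is that the leaf space would be a compact Riemannian orbifold of positive curvature whose regular stratum is everything, which forces extra structure (O'Neill tensor nonvanishing in codimension $\geq 2$, and nonexistence in low codimension).

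For codimension one there is also a more elementary route. The leaves are equidistant hypersurfaces forming a regular transnormal foliation of a closed manifold. After passing to a double cover if needed, they are level sets of a transnormal function $f\colon S\to \mathbb{R}$ without singular levels. Such an $f$ must attain a maximum on closed $S$, where $df=0$. This contradicts the regularity of the foliation, which requires $\nabla f\neq 0$ everywhere.

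\textbf{Main obstacle.} I expect the main obstacle to be the global step: making precise that a regular codimension-one Riemannian foliation on closed $S$ is impossible under positive curvature. The issue is whether the leaves are closed. The plan is to use Frankel/Synge-type arguments, or Lytchak–Thorbergsson directly, rather than the naive maximum argument.

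The dimension count is a lesser check. When $\dim S=1$ the leaves are points and the statement degenerates, so I assume $\dim S\geq 2$. Then the leaves of $\mathcal{F}_S$ have dimension $\geq 1$ and the cited rigidity theorem applies.
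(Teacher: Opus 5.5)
Your Step 1 follows the paper: Galloway's transnormality fed into Theorem~\ref{FTRW}. It is more careful than the paper's version, because $g(N,U)\neq 0$ gives $N\in\mathcal{V}^{\mathcal{F}}\setminus TS$. Hence $\mathcal{V}^{\mathcal{F}}+TS=TM$, so $\mathcal{F}_S$ is a regular codimension-one foliation of $S$.

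The gap is in Step 2. The rigidity statement you quote (no Riemannian foliation with positive-dimensional leaves on a closed positively curved manifold) is false: the Hopf fibrations $\mathbb{S}^{2n+1}\to\mathbb{CP}^n$ are counterexamples. Only the codimension-one statement holds, and that is what the paper invokes. Your elementary route also fails as written. A double cover makes $\mathcal{F}_S$ transversally orientable, but it does not make the leaves level sets of a function $f\colon S\to\mathbb{R}$. Linear foliations of a flat torus have leaf space $\mathbb{S}^1$ or dense leaves, so there is no maximum to exploit. You flag this obstacle yourself but leave it open, so your proof of the second assertion currently rests on a misquoted theorem.

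The repair is short and needs neither closed leaves nor Frankel/Synge arguments.
\begin{enumerate}
\item By Bonnet--Myers the universal cover $\tilde S$ is compact. Being simply connected, the lifted foliation (still regular, codimension one and $h$-transnormal, since these are local conditions) has a global unit normal field $\xi$.
\item Lewis' criterion with $X=Y=\xi$ gives $\nabla_\xi\xi\in\operatorname{span}\{\xi\}$. Since $h(\nabla_\xi\xi,\xi)=0$, this forces $\nabla_\xi\xi=0$.
\item Then $\omega=h(\xi,\cdot)$ is closed. For leaf-tangent $Y_1,Y_2$, integrability gives $d\omega(Y_1,Y_2)=-\omega([Y_1,Y_2])=0$. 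For leaf-tangent $Y$, a direct computation gives $d\omega(\xi,Y)=h(\nabla_\xi\xi,Y)=0$.
\item Since $H^1(\tilde S;\mathbb{R})=0$, we have $\omega=df$. At a maximum of $f$ on the compact $\tilde S$ we get $\omega=0$, contradicting $\omega(\xi)=1$.
\end{enumerate}
This runs your maximum argument on the correct cover and gives a self-contained proof that needs only positive Ricci curvature on $S$. The paper, by contrast, defers this step entirely to a cited rigidity theorem.
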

\begin{proof}
A codimension-one lightlike foliation $\mathcal{F}$ is $g$-transnormal. Since $U$ is timelike and $\mathcal{F}$ is lightlike, $U$ is nowhere tangent to $\mathcal{F}$, and thus $U$ is transversal to $\mathcal{F}$. Since $U$ is irrotational and conformal, Theorem~\ref{FTRW} implies that $\mathcal{F}_S$ is an $h$-transnormal foliation on $S$, which corresponds to a codimension-one Riemannian foliation on $(S, h)$. The second assertion then follows directly from Lytchak and Thorbergsson's rigidity result (\cite[Theorem 1.1]{lytchak2016riemannian}), which rules out the existence of codimension-one Riemannian foliations on positively curved closed manifolds.
\end{proof}

\section{Generalized Semi-Riemannian Foliations}

It is not true that a regular homogeneous foliation is a classical semi-Riemannian foliation, since the leaves can have distinct causal characters and even be degenerate. Let us establish a generalization that includes homogeneous foliations.

\begin{definition}
A \emph{generalized semi-Riemannian metric} on a manifold $B$ is a pair $(\mathcal{D},h)$ where $\mathcal{D}$ is a smooth singular distribution and $h$ is a smooth assignment that associates to each $p\in M$ a scalar product $h_p$ on $\mathcal{D}_p$. In this case, $(B,\mathcal{D},h)$ is called a \emph{generalized semi-Riemannian manifold}. 
\end{definition}

\begin{example}
Let $h = -\mathbf{h} dt \otimes dt + \mathbf{h}^{-1} dr \otimes dr + r^2 d\sigma^2$ be the Schwarzschild metric on $B = \mathbb{R} \times (0,\infty) \times S^2$, where $\mathbf{h}(r) = 1 - \frac{2m}{r}$, $m > 0$, and $S^2$ is the two-dimensional unit sphere \cite{o1983semi}. Such a metric models a universe with a single star or black hole. Consider the singular distribution $\mathcal{D}$ given for $p = (t,r,x) \in B$ by $\mathcal{D}_p = T_p S$ if $r = 2m$, where $S = (t,2m,S^2)$, and $\mathcal{D}_p = T_p B$ otherwise. Note that if $r = 2m$, then $p$ is contained within the event horizon. This distribution is generated around $p = (t,2m,x)$ by $X_1(t,r,x) = \mathbf{h}\partial_t$, $X_2 = \mathbf{h}\partial_r$, and $X_3 = (0,0,\tilde{X}_3)$, $X_4 = (0,0,\tilde{X}_4)$, where $\tilde{X}_3, \tilde{X}_4$ are linearly independent vector fields on $S^2$ defined around $x$. The restriction of $h$ to $\mathcal{D}$ is a generalized semi-Riemannian metric because $h(X_1,X_1) = -\mathbf{h}^3$, $h(X_2,X_2) = \mathbf{h}$, and $h(X_1,X_2) = 0$ are smooth, and $g(X_i,X_j)$ is also smooth for $i,j \ge 2$. 
\end{example}

\begin{definition} 
Let $(M,g)$ be a semi-Riemannian manifold, $(B,\mathcal{D},h)$ be a singular semi-Riemannian manifold, and $\pi:M\to B$ be a surjective submersion. The map $\pi:(M,g)\to (B,\mathcal{D},h)$ is a \emph{generalized semi-Riemannian submersion} if 
\begin{enumerate}
    \item $\mathcal{H}$ is locally generated by basic vector fields and $\mathcal{D}_{\pi} = d\pi(\mathcal{H}) = d\pi(\mathcal{A})$; and 
    \item $g_p(u,v) = h_{\pi(p)}(d\pi(u),d\pi(v))$ for all $p\in M$ and $u,v\in \mathcal{H}_p$. 
\end{enumerate}
Note that in this case $\mathcal{H}$ is locally finitely generated by basic fields, because it is a regular distribution.  

A \emph{generalized semi-Riemannian foliation} on a semi-Riemannian manifold $(M,g)$ is a regular foliation such that for each $p\in M$ there exist an open neighborhood $U\subset M$ of $p$ and a generalized semi-Riemannian submersion $\pi:(U,g|_{TU\times TU})\to (B,\mathcal{D},h)$ such that $\mathcal{F}|_U = \{L_p\cap U\}_{p\in U}$ coincides with the foliation $\mathcal{F}_{\pi}$ given by the fibers of $\pi$. 
\end{definition}

Note that the codimension of $\mathcal{D}_{\pi(p)}$ coincides with the dimension of $\mathcal{R}_p$. In particular, a fiber $L = \pi^{-1}(x)$ is non-degenerate if and only if $\mathcal{D}_x = T_x B$. Moreover, for any subspace $T \subset \mathcal{H}_p$ such that $\mathcal{A}_p = T \oplus \mathcal{V}_p$, the restriction of $d\pi_p$ to $T$,
\[
d\pi_p|_T : (T, g) \to (\mathcal{D}_{\pi(p)}, h),
\]
is a linear isometry. These properties, combined with Condition 1 of the preceding definition, yield the following result.

\begin{lemma}
If $\pi:(M,g)\to (B,\mathcal{D},h)$ is a generalized semi-Riemannian submersion, then $\dim \mathcal{R}$ and $\operatorname{Ind}(\mathcal{V})$ are constant along the fibers.  
\end{lemma}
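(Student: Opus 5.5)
The plan is to read both invariants off the fixed subspace $\mathcal{D}_x$ and the scalar product $h_x$, which depend only on $x=\pi(p)$ and not on the point $p$ of the fiber $L=\pi^{-1}(x)$. Throughout, set $n=\dim M$ and $k=\dim\mathcal{V}_p$; the latter is constant because $\pi$ is a submersion.

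\emph{Radical dimension.} Since $\mathcal{H}_p=\mathcal{V}_p^{\perp}$ and $g$ is nondegenerate, $\dim\mathcal{H}_p=n-k$. Hence
\[
\dim\mathcal{A}_p=\dim\mathcal{V}_p+\dim\mathcal{H}_p-\dim\mathcal{R}_p=n-\dim\mathcal{R}_p .
\]
By Condition 1 of the definition, $d\pi_p(\mathcal{A}_p)=\mathcal{D}_x$. Moreover $\ker d\pi_p=\mathcal{V}_p\subset\mathcal{A}_p$, so
\[
\dim\mathcal{D}_x=\dim\mathcal{A}_p-k=n-k-\dim\mathcal{R}_p .
\]
Therefore $\dim\mathcal{R}_p=n-k-\dim\mathcal{D}_x$, which depends only on $x$. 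This is precisely the remark above that the codimension of $\mathcal{D}_{\pi(p)}$ equals $\dim\mathcal{R}_p$.

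\emph{Index.} Choose a complement $T\subset\mathcal{H}_p$ with $\mathcal{A}_p=T\oplus\mathcal{V}_p$. Then $d\pi_p|_T:(T,g)\to(\mathcal{D}_x,h_x)$ is a linear isometry, so $\operatorname{Ind}(g|_T)=\operatorname{Ind}(h_x)$. I will also need $T\cap\mathcal{R}_p=0$: indeed $\mathcal{R}_p\subset\mathcal{V}_p$ and $T\cap\mathcal{V}_p=0$. It then suffices to show that $g|_T$ is nondegenerate. Suppose $t\in T$ is $g$-orthogonal to $T$. Since $t\in\mathcal{H}_p$, it is also orthogonal to $\mathcal{V}_p$, hence to all of $\mathcal{A}_p$. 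Then
\[
t\in\mathcal{A}_p^{\perp}=(\mathcal{V}_p+\mathcal{H}_p)^{\perp}=\mathcal{H}_p\cap\mathcal{V}_p=\mathcal{R}_p ,
\]
so $t\in T\cap\mathcal{R}_p=0$.

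Next, decompose $T_pM$ by a Witt-type decomposition. Write $\mathcal{V}_p=\mathcal{R}_p\oplus W$ with $W$ a complement, so $g|_W$ is nondegenerate. Let $r=\dim\mathcal{R}_p$. Then $\mathcal{A}_p=T\oplus W\oplus\mathcal{R}_p$, and $T\perp W$ because $T\subset\mathcal{H}_p$. Thus $E:=T\oplus W$ is nondegenerate with radical-free restriction, and $E^{\perp}$ is a $2r$-dimensional nondegenerate space containing the totally isotropic $r$-dimensional subspace $\mathcal{R}_p$. Consequently $E^{\perp}$ has signature $(r,r)$, and
\[
\operatorname{Ind}(g)=\operatorname{Ind}(h_x)+\operatorname{Ind}(g|_W)+r .
\]
By definition, $\operatorname{Ind}(\mathcal{V}_p)$ is the index of the degenerate form $g|_{\mathcal{V}_p}$, which equals $\operatorname{Ind}(g|_W)$ because the radical contributes nothing. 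Solving the displayed equation gives
\[
\operatorname{Ind}(\mathcal{V}_p)=\operatorname{Ind}(g)-\operatorname{Ind}(h_x)-\dim\mathcal{R}_p ,
\]
which is constant along the fiber by the first step.

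The main obstacle is this last bookkeeping step: verifying that $E^{\perp}$ is neutral of dimension $2r$. This holds because $E^{\perp}$ is nondegenerate of dimension $n-(n-k-r)-(k-r)=2r$ and contains the totally isotropic $\mathcal{R}_p$ of half its dimension. One must also fix the convention that the index of a degenerate subspace means the maximal dimension of a negative definite subspace.
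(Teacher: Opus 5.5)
Your proposal is correct and follows the paper's route. The paper notes only that $\operatorname{codim}\mathcal{D}_{\pi(p)}=\dim\mathcal{R}_p$ and that $d\pi_p|_T\colon (T,g)\to(\mathcal{D}_{\pi(p)},h)$ is a linear isometry, and leaves the index count implicit; your orthogonal splitting $T_pM=(T\oplus W)\oplus E^{\perp}$, with $E^{\perp}$ neutral of dimension $2\dim\mathcal{R}_p$, supplies that missing step and yields $\operatorname{Ind}(\mathcal{V}_p)=\operatorname{Ind}(g)-\operatorname{Ind}(h_{\pi(p)})-\dim\mathcal{R}_p$.
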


\begin{example}[Classical Semi-Riemannian Submersion] 
Let $(M,g)$ be a semi-Riemannian manifold and consider a submersion $\pi:(M,g)\to B$ whose fibers are semi-Riemannian submanifolds of $(M,g)$, meaning that $g$ is non-degenerate along the fibers. In this case, $d\pi(\mathcal{H}_p) = T_{\pi(p)}B$ for all $p\in M$, and thus the definition of a generalized semi-Riemannian submersion coincides with the classical definition of a semi-Riemannian submersion when $(M,g)$ is a semi-Riemannian manifold.  
\end{example}

\begin{example}
Consider the semi-Riemannian manifold $M = (\mathbb{L}^n \setminus \{0\}) \oplus \mathbb{R}^m \oplus \mathbb{R}^k$ endowed with the product metric $g = \langle \cdot, \cdot \rangle_1 \oplus \langle \cdot, \cdot \rangle \oplus \langle \cdot, \cdot \rangle$, where $\langle \cdot, \cdot \rangle_1$ denotes the Minkowski metric and $\langle \cdot, \cdot \rangle$ denotes the standard Euclidean inner product. Define the map $\pi: M \to \mathbb{R} \oplus \mathbb{R}^m$ by
\[
\pi(x,y,w) = (q(x),y),
\]
where $q(x) = \langle x,x \rangle_1$. This map is a submersion since its differential at $p = (x,y,w) \in M$ is given by
\[
d\pi_{(x,y,w)}(u,v,z) = (2\langle x,u \rangle_1, v).
\]
Now, define the distribution $\mathcal{D}$ on $\mathbb{R} \oplus \mathbb{R}^m$ by
\[
\mathcal{D}_{(0,y)} := \{0\} \times T_y\mathbb{R}^m \quad \text{and} \quad \mathcal{D}_{(r,y)} := T_{(r,y)}(\mathbb{R} \oplus \mathbb{R}^m) \text{ if } r \neq 0.
\]
This distribution is smooth because it is globally generated by the smooth vector fields $X(r,y) = (r, 0)$ and $Y_i(r,y) = (0, e_i)$ for $i = 1, \dots, m$, where $\{e_i\}$ is the canonical basis of $\mathbb{R}^m$. 

Define the singular semi-Riemannian metric $h$ on $\mathcal{D}$ by
\[
h_{(r,y)}((t_1,v_1), (t_2,v_2)) = h^0_r(t_1,t_2) + \langle v_1,v_2 \rangle,
\]
where $h^0_r(t_1,t_2) = \frac{t_1 t_2}{4r}$ for $r \neq 0$, and $h^0_0 = 0$. The tensor $h$ is smooth on $\mathcal{D}$ because its evaluation on the generating vector fields yields smooth functions: $h(X,X) = r/4$, $h(X,Y_i) = 0$, and $h(Y_i,Y_j) = \delta_{ij}$.

To verify the generalized metric condition, note that the horizontal space (the orthogonal complement to the fibers of $\pi$ with respect to $g$) at a point $p = (x,y,w)$ is spanned by $(x, 0, 0)$ and the vectors $(0, e_i, 0)$ for $i = 1, \dots, m$. 
Let $V = (x,v,0)$ be an arbitrary horizontal vector at $p$. We apply the differential $d\pi_p$ to $V$ to get $d\pi_p(V) = (2\langle x,x \rangle_1, v) = (2\pi_1(p), v)$, where $\pi_1(p) = q(x)$.

If $\pi_1(p) \neq 0$, then evaluating $h$ on the pushed-forward horizontal vector yields
\begin{align*}
    h_{\pi(p)}(d\pi_p V, d\pi_p V) &= h^0_{q(x)}(2q(x), 2q(x)) + \langle v,v \rangle \\ 
    &= \frac{4q(x)^2}{4q(x)} + \langle v,v \rangle = \langle x,x \rangle_1 + \langle v,v \rangle = g(V,V).
\end{align*}
On the other hand, if $\pi_1(p) = 0$ (meaning $x$ is a null vector), then $d\pi_p(V) = (0, v) \in \mathcal{D}_{(0,y)}$. In this case, $h^0_0 = 0$, and we obtain
\[
h_{\pi(p)}(d\pi_p V, d\pi_p V) = 0 + \langle v,v \rangle = \langle x,x \rangle_1 + \langle v,v \rangle = g(V,V).
\]
Thus, we conclude that $\pi: (M, g) \to (\mathbb{R} \oplus \mathbb{R}^m, \mathcal{D}, h)$ is a generalized semi-Riemannian submersion.
\end{example}

\begin{example}
On $M = \mathbb{L}^n_1 \setminus \{t e_n : t \leq 0\}$, consider the foliation $\mathcal{F}$ whose leaf $L_p$ through a point $p$ is given by the submanifold
\[
L_p = \{q \in M : \langle q, q \rangle_1 = \langle p, p \rangle_1 \text{ and } \langle q, e_n \rangle_1 < 0\}
\]
if $\langle p, p \rangle_1 < 0$ and $\langle p, e_n \rangle_1 < 0$; otherwise, $L_p$ is the cone
\[
\{q + t e_n : \langle q, q \rangle_1 = 0 \text{ and } \langle q, e_n \rangle_1 < 0\}
\]
containing $p$ for some $t \leq 0$. One can easily verify that this constitutes a generalized semi-Riemannian foliation.
\end{example}

The \emph{horizontal lift} of a vector $v \in \mathcal{D}_q$ at $p \in \pi^{-1}(q)$ is a vector $\tilde{v}$ such that $d\pi(\tilde{v}) = v$. The horizontal lift of a section $Y$ of $\mathcal{D}$ is a section $\tilde{Y}$ of $\mathcal{H}$ such that $d\pi(\tilde{Y}) = Y \circ \pi$.  

\begin{lemma}[Existence of Horizontal Lifts]  
Let $(M,g)$ be a semi-Riemannian manifold, and let $\pi:M \to B$ be a surjective submersion such that $\mathcal{H}$ is locally generated by basic vector fields. If $Y$ is a smooth section of $\mathcal{D} = d\pi(\mathcal{H})$, then there exists a basic vector field $\tilde{Y}$ such that $d\pi(\tilde{Y}) = Y \circ \pi$.  
\end{lemma}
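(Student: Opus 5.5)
Following the proof of Lemma~\ref{lemma1}, the plan is to build $\tilde Y$ locally from finitely many basic generators and then patch the local pieces with a partition of unity on $B$. Fix $q\in B$. Choose an open $W\subset M$ meeting $\pi^{-1}(q)$ on which $\mathcal{H}|_W$ is spanned by basic fields $\tilde X_1,\dots,\tilde X_l$. Shrinking $W$ to a saturated set of the form $\pi^{-1}(O)\cap W$, with connected fibres (for instance via a submersion chart), we may assume each $\tilde X_i$ projects to a field $X_i$ on $O=\pi(W)$. Lemma~\ref{lemma1} then shows that $\mathcal{D}|_O$ is spanned by $X_1,\dots,X_l$.

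The key step is to write $Y|_O=\sum_i f_i X_i$ with smooth functions $f_i$ on $O$, possibly after shrinking $O$. Once this is done, set $\tilde Y=\sum_i (f_i\circ\pi)\tilde X_i$. This field is horizontal. It is also foliated, because for vertical $Z$
\[
[Z,(f_i\circ\pi)\tilde X_i]=(f_i\circ\pi)[Z,\tilde X_i]+Z(f_i\circ\pi)\tilde X_i=(f_i\circ\pi)[Z,\tilde X_i],
\]
since $Z(f_i\circ\pi)=0$; this bracket is vertical by Lemma~\ref{campoprojetavel}. Finally $d\pi(\tilde Y)=\sum (f_i\circ\pi)(X_i\circ\pi)=Y\circ\pi$. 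The same computation shows that if $\{\rho_a\}$ is a partition of unity on $B$ subordinate to such sets $O_a$, then $\sum_a(\rho_a\circ\pi)\tilde Y_a$ is basic and projects to $Y$. Here each $\tilde Y_a$ is defined on $\pi^{-1}(O_a)$, or on a union of local pieces over $O_a$ glued by a partition of unity pulled back along the fibres. The global field is then the one required.

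The main obstacle is the smooth decomposition $Y=\sum f_iX_i$ at points $x$ where $\dim\mathcal{D}_x$ drops, since there the generators are linearly dependent and the coefficients are not unique. Smoothness of $Y$ as a section of $TB$ does not by itself give smooth coefficients with respect to arbitrary generators of a singular distribution. I would handle this as follows.

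First, take $x=\pi(p)$ and choose, via Lemma~\ref{SuavidadeDistribuicaoHorizontal}, a smooth local frame $E_1,\dots,E_{n-k}$ of the regular distribution $\mathcal{H}$ near $p$. Second, lift $Y$ pointwise using a smooth linear right inverse: $d\pi|_{\mathcal{H}}\colon\mathcal{H}\to\pi^*TB$ is a smooth bundle map. Its image is $\pi^*\mathcal{D}$, and its kernel is $\mathcal{R}$, which has constant rank along fibres. Third, restrict to a local section $\sigma\colon O\to M$ and take the lift $\tilde Y_0=\sum c_jE_j$ along $\sigma(O)$. The coefficients $c_j$ should come out smooth, because the smoothness of $Y$ as a section of $\mathcal{D}$ in the paper's sense (sections of $\mathcal{D}$ are generated by the $X_i$) is exactly what is needed.

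Thus I would interpret ``smooth section of $\mathcal{D}$'' as a $C^\infty(B)$-combination of the local generators, so that the obstacle reduces to the module statement. The last step is to extend from $\sigma(O)$ to a basic field by transporting along the fibres using $\sum(f_i\circ\pi)\tilde X_i$.
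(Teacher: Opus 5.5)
Once you have $Y=\sum_i f_iX_i$ with $f_i\in C^\infty(O)$, your construction $\tilde Y=\sum_i (f_i\circ\pi)\tilde X_i$ is exactly the paper's. The paper gets that decomposition in one line (``$\{Y_1,\dots,Y_k\}$ locally generates $\mathcal{D}$. Thus, there exist smooth functions\ldots''), which is precisely the step you single out. You are right that this is the crux, but your proposal does not settle it.

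\textbf{The right-inverse step fails.} The kernel $\ker(d\pi|_{\mathcal H})=\mathcal{R}$ has constant rank along each fibre, but its rank jumps between fibres. So near a degenerate fibre $d\pi|_{\mathcal H}$ has no smooth right inverse, and the coefficients $c_j$ along $\sigma(O)$ need not be smooth.

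\textbf{Your fallback is not the paper's definition.} The paper defines a smooth section of $\mathcal{D}$ as a smooth vector field tangent to $\mathcal{D}$. That is strictly weaker than lying in the $C^\infty$-span of the $X_i$, so your reading is a change of hypothesis.

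\textbf{With the paper's definition the gap cannot be closed, because the statement fails.} On $M=\mathbb{R}^2$ take the Lorentzian metric with $g(\partial_x,\partial_x)=0$, $g(\partial_x,\partial_y)=1$, $g(\partial_y,\partial_y)=x^2$, and let $\pi(x,y)=x$.
\begin{enumerate}
\item $\mathcal{H}$ is spanned by the nowhere-vanishing field $E=x^2\partial_x-\partial_y$. It is basic, since its projection $x^2\partial_x$ does not depend on $y$.
\item Hence $\mathcal{D}_x=\operatorname{span}\{x^2\partial_x\}$ and $\mathcal{D}_0=\{0\}$.
\item The field $Y=x\partial_x$ is smooth and tangent to $\mathcal{D}$.
\item Every horizontal field has the form $fE$ with $f$ smooth, and projects to $fx^2\partial_x$. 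A lift of $Y$ would therefore force $f=1/x$.
\end{enumerate}
Since $\mathcal{V}$ is spanned by the Killing field $\partial_y$, this is even a generalized semi-Riemannian submersion.

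So the lemma, and the paper's proof, hold only under the extra hypothesis you adopt in your last paragraph. That hypothesis is that $Y$ lies locally in the $C^\infty(B)$-span of the projections of basic generators of $\mathcal{H}$, which is equivalent to $Y$ being $\pi$-related to some horizontal field. State it explicitly as a hypothesis rather than as a reading of the paper, and drop the right-inverse step; then your argument is complete.

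Your globalization can also be simplified. Any horizontal $\tilde Y$ with $d\pi(\tilde Y)=Y\circ\pi$ is projectable, hence basic by Lemma~\ref{campoprojetavel}. This property survives convex combinations, so local lifts glue with an ordinary partition of unity on $M$; no pulled-back partition is needed.
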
  
\begin{proof}  
We prove the result locally. Suppose that $X_1, \dots, X_k$ are basic vector fields that locally generate $\mathcal{H}$. Let $Y_1, \dots, Y_k$ be vector fields $\pi$-related to $X_1, \dots, X_k$, respectively. Note that $\{Y_1, \dots, Y_k\}$ locally generates $\mathcal{D}$. Thus, there exist smooth functions $a_1, \dots, a_k$ such that $Y = \sum a_i Y_i$.  

Consider the functions $\tilde{a}_1, \dots, \tilde{a}_k$ defined by $\tilde{a}_i = a_i \circ \pi$, and define $\tilde{Y} = \sum \tilde{a}_i X_i$. By construction, $\tilde{Y}$ is basic and satisfies $d\pi(\tilde{Y}) = Y \circ \pi$.  
\end{proof}  

\begin{lemma}\label{lemaunilev}
The horizontal lift of vectors in $\mathcal{D}(q)$ at $p \in \pi^{-1}(q)$ is unique if and only if the vertical distribution $\mathcal{V}_p$ is non-degenerate. Moreover, if $\tilde{v}$ and $\hat{v}$ are two horizontal lifts of $v$, then there exists $v_0 \in \mathcal{R}_p$ such that $\tilde{v} = \hat{v} + v_0$. In particular, $g(\tilde{v}, \tilde{v}) = g(\hat{v}, \hat{v})$.  
\end{lemma}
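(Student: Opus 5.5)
The plan is to work entirely at the linear-algebra level at the single point $p$, using $\mathcal{A}_p=\mathcal{V}_p+\mathcal{H}_p$, $\mathcal{R}_p=\mathcal{V}_p\cap\mathcal{H}_p$ and $\ker d\pi_p=\mathcal{V}_p$. First I describe all horizontal lifts of a given $v\in\mathcal{D}_q$. By definition of $\mathcal{D}$ at least one lift $\hat v\in\mathcal{H}_p$ exists, since $d\pi_p(\mathcal{H}_p)=\mathcal{D}_q$. If $\tilde v$ is another horizontal lift, then $d\pi_p(\tilde v-\hat v)=0$, so $v_0:=\tilde v-\hat v\in\ker d\pi_p=\mathcal{V}_p$. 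Since $\mathcal{H}_p$ is a subspace, also $v_0\in\mathcal{H}_p$, hence $v_0\in\mathcal{R}_p$. Conversely, for every $v_0\in\mathcal{R}_p$ the vector $\hat v+v_0$ is horizontal and projects to $v$. So the set of lifts of $v$ is exactly the affine space $\hat v+\mathcal{R}_p$. This proves the ``moreover'' assertion.

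Uniqueness is then equivalent to $\mathcal{R}_p=\{0\}$. It remains to identify this with non-degeneracy of $\mathcal{V}_p$. Here $\mathcal{R}_p=\mathcal{V}_p\cap\mathcal{V}_p^{\perp}$ is precisely the radical of the restriction $g_p|_{\mathcal{V}_p}$. So $\mathcal{R}_p=\{0\}$ if and only if $g_p|_{\mathcal{V}_p}$ is non-degenerate, and this is the claimed equivalence.

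For the last claim, write $\tilde v=\hat v+v_0$ with $v_0\in\mathcal{R}_p$ and expand
\[
g(\tilde v,\tilde v)=g(\hat v,\hat v)+2g(\hat v,v_0)+g(v_0,v_0).
\]
Since $\hat v\in\mathcal{H}_p=\mathcal{V}_p^{\perp}$ and $v_0\in\mathcal{V}_p$, the middle term vanishes. Since $v_0$ lies in both $\mathcal{V}_p$ and $\mathcal{V}_p^{\perp}$, the last term also vanishes. In fact the same computation gives $g(\tilde v,\tilde w)=g(\hat v,\hat w)$ for any lifts of $v,w$.

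None of the steps is a real obstacle. The only point needing care is that the uniqueness statement is pointwise, so one uses $\mathcal{R}_p$ rather than regularity of $\mathcal{R}$, and that existence of a lift is guaranteed by $\mathcal{D}_q=d\pi_p(\mathcal{H}_p)$.
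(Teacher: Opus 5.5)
Your proposal is correct and is essentially the paper's argument: both rest on $\ker d\pi_p=\mathcal{V}_p$ and on radical vectors being simultaneously horizontal and vertical. The paper handles the forward direction via $T_pM=\mathcal{H}_p\oplus\mathcal{V}_p$ and the converse by adding a nonzero $v_0\in\mathcal{R}_p$ to a lift, whereas your direct identification $\ker(d\pi_p|_{\mathcal{H}_p})=\mathcal{R}_p$ also proves the ``moreover'' and ``in particular'' clauses (which the paper's proof leaves implicit) and stays pointwise instead of invoking non-degeneracy of the whole fiber.
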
  
\begin{proof}  
If $\pi^{-1}(q)$ is non-degenerate, then $T_pM = \mathcal{H}(p) \oplus T_p\pi^{-1}(q)$ and $\mathcal{D}(q) = T_qB$. In particular, the restriction $d\pi_p|_{\mathcal{H}(p)}: \mathcal{H}(p) \to T_qB$ is a linear isomorphism, implying the uniqueness of the horizontal lift.  

To prove the converse, assume that $\pi^{-1}(q)$ is degenerate and show that the horizontal lift is not unique. Given $v \in \mathcal{D}(q)$ and $p \in \pi^{-1}(q)$, let $\tilde{v}$ be a horizontal lift of $v$. Since $\pi^{-1}(q)$ is degenerate, there exists a nontrivial vector $v_0 \in \mathcal{R}_p$, which, in particular, is both horizontal and vertical. Thus, $\tilde{v} + v_0$ is horizontal and satisfies $d\pi(\tilde{v} + v_0) = d\pi(\tilde{v}) = v$, meaning that $\tilde{v} + v_0$ is also a horizontal lift of $v$.  
\end{proof}  

\begin{lemma}\label{Lema1}  
Let $(M,g)$ be a semi-Riemannian manifold, and let $\pi:(M,g) \to B$ be a submersion such that the horizontal distribution is locally generated by basic vector fields. If $U$ is vertical and $\tilde{X}$ is the horizontal lift of a vector field $X \in \mathfrak{X}(\mathcal{D})$, then $[U, \tilde{X}]$ is also vertical. If $\mathcal{D} = d\pi(\mathcal{H})$ is involutive, then $[\tilde{X}, \tilde{Y}] - \widetilde{[X,Y]}$ is vertical for any $X, Y \in \mathfrak{X}(\mathcal{D})$, where $\tilde{X}, \tilde{Y}$, and $\widetilde{[X,Y]}$ are the horizontal lifts of $X$, $Y$, and $[X,Y]$, respectively.  
\end{lemma}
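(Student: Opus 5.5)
The plan is to reduce both assertions to the fact that a horizontal lift $\tilde X$ of a section $X\in\mathfrak{X}(\mathcal{D})$ is $\pi$-related to $X$: by definition $d\pi(\tilde X)=X\circ\pi$. Since horizontal lifts are not unique when fibers are degenerate (Lemma~\ref{lemaunilev}), we never use the particular basic field produced by the existence lemma. We only use this $\pi$-relatedness, which every horizontal lift enjoys.

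For the first assertion, take $U\in\mathfrak{X}(\mathcal{V})$, so $U$ is $\pi$-related to the zero vector field on $B$. Since $\tilde X$ is $\pi$-related to $X$, the naturality of the Lie bracket under $\pi$-related fields gives that $[U,\tilde X]$ is $\pi$-related to $[0,X]=0$. Hence $d\pi([U,\tilde X])=0$, i.e.\ $[U,\tilde X]\in\ker d\pi=\mathcal{V}$.

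Equivalently, $\tilde X$ is projectable, so Lemma~\ref{campoprojetavel}(3) applies directly. The only subtlety is that $X$ is just a smooth section of the singular distribution $\mathcal{D}$, but it is still a genuine smooth vector field on $B$ (or on an open subset, working locally). So the naturality statement applies without change.

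For the second assertion, assume $\mathcal{D}$ is involutive. Then $[X,Y]\in\mathfrak{X}(\mathcal{D})$, so the existence-of-horizontal-lifts lemma provides a horizontal lift $\widetilde{[X,Y]}$. By naturality again, $[\tilde X,\tilde Y]$ is $\pi$-related to $[X,Y]$. Therefore
\[
d\pi\bigl([\tilde X,\tilde Y]-\widetilde{[X,Y]}\bigr)=[X,Y]\circ\pi-[X,Y]\circ\pi=0,
\]
and the difference is vertical.

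The only point needing care, and the expected obstacle, is making sure the lifts used are smooth vector fields on the relevant open sets and that the argument is local. Both conclusions are pointwise, so working in a neighborhood where $\mathcal{H}$ is generated by basic fields suffices. Involutivity is needed only to guarantee that $\widetilde{[X,Y]}$ exists at all, since lifts are defined only for sections of $\mathcal{D}$.
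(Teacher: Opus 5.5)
Your proposal is correct and follows essentially the same route as the paper. For the first claim, the paper uses Lemma~\ref{campoprojetavel}, since every horizontal lift is projectable. For the second, it uses the $\pi$-relatedness identity $d\pi([\tilde X,\tilde Y])=[X,Y]\circ\pi=d\pi(\widetilde{[X,Y]})$, phrased by testing the difference on functions $f\circ\pi$. Your remarks are also accurate: only $\pi$-relatedness is used, not a particular basic lift, and involutivity serves only to guarantee that $\widetilde{[X,Y]}$ exists.
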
  
\begin{proof}  
By Lemma~\ref{campoprojetavel}, $[U, \tilde{X}]$ is vertical. Now, we verify the second statement. Observe that a smooth vector field $U$ on $M$ is vertical if and only if $U(f \circ \pi) = 0$ for any smooth function $f: B \to \mathbb{R}$. Thus,  
\[
(\widetilde{[X,Y]} - [\tilde{X},\tilde{Y}])(f \circ \pi) = (d\pi(\widetilde{[X,Y]}) - d\pi([\tilde{X},\tilde{Y}]))(f) = 0,
\]  
since $d\pi(\widetilde{[X,Y]}) = [X,Y] = d\pi([\tilde{X},\tilde{Y}])$.  
\end{proof}  

\begin{theorem}\label{T} 
Let $(M,g)$ be a semi-Riemannian manifold, and let $\pi: M \to B$ be a surjective submersion such that the horizontal distribution $\mathcal{H}$ is locally generated by basic vector fields. Then, there exists a generalized semi-Riemannian metric $(\mathcal{D},h)$ on $B$ such that $\pi: (M,g) \to (B,\mathcal{D},h)$ is a generalized semi-Riemannian submersion if and only if  
\[
\mathcal{L}_U g(X,Y) = 0
\]
for all vector fields $U \in \mathfrak{X}(\mathcal{V})$ and $X,Y \in \mathfrak{X}(\mathcal{H})$.  
\end{theorem}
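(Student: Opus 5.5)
The plan is to reduce both directions to the behaviour of the function $g(\tilde X,\tilde Y)$ for basic fields $\tilde X,\tilde Y$, and to take $\mathcal{D}=d\pi(\mathcal{H})$ as furnished by Lemma~\ref{lemma1}. The key identity is that for $U\in\mathfrak{X}(\mathcal{V})$ and basic $\tilde X,\tilde Y$, Lemma~\ref{Lema1} gives $[U,\tilde X],[U,\tilde Y]\in\mathfrak{X}(\mathcal{V})$. These brackets are orthogonal to the horizontal fields $\tilde Y,\tilde X$, so
\[
\mathcal{L}_U g(\tilde X,\tilde Y)=U\bigl(g(\tilde X,\tilde Y)\bigr).
\]
Since $\mathcal{L}_Ug$ is tensorial and $\mathcal{H}$ is locally generated by basic fields, the condition $\mathcal{L}_Ug(X,Y)=0$ for all horizontal $X,Y$ is equivalent to $g(\tilde X,\tilde Y)$ being constant along fibers, locally, for all basic $\tilde X,\tilde Y$. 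Here we use that the fibers are connected in the local picture; otherwise we shrink the domain or argue fiberwise.

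($\Rightarrow$) Suppose $\pi$ is a generalized semi-Riemannian submersion, and let $\tilde X,\tilde Y$ be basic, projecting to $X,Y\in\mathfrak{X}(\mathcal{D})$. Condition (2) gives $g(\tilde X,\tilde Y)=h(X,Y)\circ\pi$, which is constant on fibers. Hence $U g(\tilde X,\tilde Y)=0$, and by the identity above together with tensoriality, $\mathcal{L}_Ug$ vanishes on $\mathcal{H}$.

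($\Leftarrow$) Suppose $\mathcal{L}_Ug$ vanishes on $\mathcal{H}$. Set $\mathcal{D}=d\pi(\mathcal{H})=d\pi(\mathcal{A})$; this is a smooth distribution with $\mathcal{D}\circ\pi=d\pi(\mathcal{H})$ by Lemma~\ref{lemma1}. For $v,w\in\mathcal{D}_x$, choose $p\in\pi^{-1}(x)$ and horizontal lifts $\tilde v,\tilde w\in\mathcal{H}_p$, and define $h_x(v,w)=g(\tilde v,\tilde w)$.

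\emph{Independence of the lift.} By Lemma~\ref{lemaunilev}, two lifts differ by an element of $\mathcal{R}_p$. Since $\mathcal{R}_p\subset\mathcal{V}_p$ is orthogonal to all of $\mathcal{H}_p$, the value $g(\tilde v,\tilde w)$ does not depend on the choice of lifts.

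\emph{Independence of the point $p$ in the fiber.} By the Existence of Horizontal Lifts lemma, we can extend $v,w$ to sections $X,Y$ of $\mathcal{D}$ with basic lifts $\tilde X,\tilde Y$. The function $g(\tilde X,\tilde Y)$ is then constant on fibers by the identity. This gives independence of $p$ along a connected fiber, at least within the domain of the local basic generators. I expect this to be the main technical obstacle: passing from local to global requires connectedness of fibers, or a chain of overlapping neighborhoods along the fiber. Since basic fields are projectable, their projections agree on overlaps, which should make the chaining argument work.

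\emph{Smoothness and the metric condition.} Smoothness of $h$ follows because $h(X,Y)\circ\pi=g(\tilde X,\tilde Y)$ is smooth and $\pi$ is a submersion, so it admits local sections. Condition (2) then holds by construction.

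Finally, one must check that $h_x$ is a scalar product, i.e. nondegenerate on $\mathcal{D}_x$. Suppose $\tilde v$ is $g$-orthogonal to all of $\mathcal{H}_p$. Then $\tilde v\in\mathcal{H}^\perp=\mathcal{V}$, so $\tilde v\in\mathcal{R}_p$ and hence $v=d\pi(\tilde v)=0$. Therefore $h_x$ is nondegenerate, and $(\mathcal{D},h)$ is a generalized semi-Riemannian metric making $\pi$ a generalized semi-Riemannian submersion.
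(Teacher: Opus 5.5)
Your proposal is correct and follows the paper's proof essentially step for step. The identity $\mathcal{L}_U g(\tilde X,\tilde Y)=U\bigl(g(\tilde X,\tilde Y)\bigr)$ for basic fields, combined with tensoriality, gives both directions; $h$ is then defined through horizontal lifts, is well defined by Lemma~\ref{lemaunilev} and fiberwise constancy, and is nondegenerate because $\mathcal{H}^\perp=\mathcal{V}$, which is the content of Lemma~\ref{Pontochave}. The paper also tacitly assumes connected fibers (its proof says ``constant along the connected fibers''), and chaining neighborhoods cannot replace that assumption: the statement fails for disconnected fibers, e.g.\ $M=B\sqcup B$ with two different metrics on the two copies and $\pi$ the identity on each copy.
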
  
\begin{proof}  
Suppose first that $\pi: (M,g) \to (B,\mathcal{D},h)$ is a generalized semi-Riemannian submersion. For basic vector fields $\tilde{X}, \tilde{Y} \in \mathfrak{X}(\mathcal{H})$, we have  
\[
\mathcal{L}_U g(\tilde{X}, \tilde{Y}) = U g(\tilde{X}, \tilde{Y}) = 0.
\]
Since $\mathcal{L}_U g$ is $C^{\infty}(M)$-bilinear and $\mathcal{H}$ is locally generated by basic vector fields, it follows that  
\[
\mathcal{L}_U g(X,Y) = 0
\]
for all $X, Y \in \mathfrak{X}(\mathcal{H})$.  

Conversely, suppose that $\mathcal{L}_U g(X,Y) = 0$ for all $U \in \mathfrak{X}(\mathcal{V})$ and $X,Y \in \mathfrak{X}(\mathcal{H})$. By hypothesis and Lemma~\ref{lemma1}, there exists a distribution $\mathcal{D}$ on $B$ such that $d\pi(\mathcal{H}_p) = \mathcal{D}_{\pi(p)}$ for all $p \in M$. We define a metric $h$ on $\mathcal{D}$ by  
\[
h_q(x,y) = g_p(\tilde{x},\tilde{y}),
\]
where $p \in \pi^{-1}(q)$ and $\tilde{x}, \tilde{y} \in \mathcal{H}_p$ are the horizontal lifts of $x, y \in \mathcal{D}_q$, respectively.  

To show that $h$ is well-defined, given $X,Y \in \mathfrak{X}(\mathcal{D})$, let $\tilde{X},\tilde{Y} \in \mathfrak{X}(\mathcal{H})$ be their respective horizontal lifts. For any vertical vector field $U \in \mathfrak{X}(\mathcal{V})$, using $g(\mathcal{V}, \mathcal{H}) = 0$ and $[U, \tilde{X}] \in \mathfrak{X}(\mathcal{V})$, we have  
\[
0 = \mathcal{L}_U g(\tilde{X},\tilde{Y}) = U g(\tilde{X},\tilde{Y}) - g([U,\tilde{X}],\tilde{Y}) - g(\tilde{X},[U,\tilde{Y}]) = U g(\tilde{X},\tilde{Y}).
\]
Thus, the function $g(\tilde{X}(p),\tilde{Y}(p))$ is constant along the connected fibers of $\pi$. This, together with Lemma~\ref{lemaunilev}, implies that $h$ is well-defined. Finally, using Lemma~\ref{Pontochave}, we conclude that $h_q$ is a non-degenerate scalar product on $\mathcal{D}_q$, completing the proof.
\end{proof}  

Proposition~\ref{tranHLieD} and Theorem~\ref{T} imply the following result, which generalizes \cite[Theorem 1]{dolgonosova2018pseudo} to foliations that admit degenerate leaves.

\begin{theorem}\label{transnormalidadeSubmersao}
Let $(M,g)$ be a semi-Riemannian manifold and let $\mathcal{F}$ be a foliation such that the horizontal distribution is locally generated by basic vector fields. Then $\mathcal{F}$ is $g$-transnormal if and only if $\mathcal{F}$ is a generalized semi-Riemannian foliation.  
\end{theorem}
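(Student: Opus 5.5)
The plan is to combine Proposition~\ref{tranHLieD} with Theorem~\ref{T}, working locally on a neighborhood where the foliation is simple. Both conditions in Theorem~\ref{transnormalidadeSubmersao} are local: $g$-transnormality is tested along geodesics, but by Corollary~\ref{corLewis} it is equivalent to the pointwise Lewis criterion, and hence to the tensorial identity $\mathcal{L}_U g(X,Y)=0$ for $U\in\mathfrak{X}(\mathcal{V})$ and $X,Y\in\mathfrak{X}(\mathcal{H})$, by Proposition~\ref{tranHLieD}. Being a generalized semi-Riemannian foliation is local by definition.

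Fix $p\in M$. Since $\mathcal{F}$ is regular, I will choose an open neighborhood $W$ of $p$ and a surjective submersion $\pi\colon W\to B$ whose fibers are the plaques $L_q\cap W$. After shrinking $W$, I may assume that $\mathcal{H}|_W$ is generated by basic vector fields of $\mathcal{F}$. On a simple neighborhood, Lemma~\ref{campoprojetavel} identifies foliated fields with $\pi$-projectable ones, so these generators are basic for $\pi$. This puts me exactly in the setting of Lemma~\ref{lemma1} and Theorem~\ref{T}.

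For the forward direction, suppose $\mathcal{F}$ is $g$-transnormal. Proposition~\ref{tranHLieD}, applied to $\mathcal{F}|_W$ (restriction preserves the identity), gives $\mathcal{L}_U g(X,Y)=0$ on $W$. Theorem~\ref{T} then provides $(\mathcal{D},h)$ on $B$ making $\pi$ a generalized semi-Riemannian submersion. Since $p$ was arbitrary, $\mathcal{F}$ is a generalized semi-Riemannian foliation.

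For the converse, suppose that around each point there is a generalized semi-Riemannian submersion $\pi\colon W\to(B,\mathcal{D},h)$ with $\mathcal{F}|_W=\mathcal{F}_\pi$. The ``only if'' part of Theorem~\ref{T} gives $\mathcal{L}_U g(X,Y)=0$ on each such $W$. The expression $\mathcal{L}_U g(X,Y)$ is pointwise in $X$ and $Y$; in $U$ it is not tensorial, but its value at a point depends only on the germ of $U$. So the local identities glue to the global identity for all $U\in\mathfrak{X}(\mathcal{V})$ and $X,Y\in\mathfrak{X}(\mathcal{H})$. Proposition~\ref{tranHLieD} then yields $g$-transnormality.

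The main obstacle is the bookkeeping around the localization. First, one must check that the hypothesis ``$\mathcal{H}$ is locally generated by basic vector fields'' of the foliation transfers to the submersion chart. Second, one must check that Theorem~\ref{T}, stated for a surjective submersion, can be applied on $W$; in particular its well-definedness argument needs connected fibers, which holds if $W$ is taken to be a foliated chart with connected plaques. I would handle this by always choosing a cubical foliated chart and projecting onto the transverse coordinates.
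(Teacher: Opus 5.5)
Your proposal is correct and follows the paper's route exactly: the paper obtains this theorem in one line by combining Proposition~\ref{tranHLieD} with Theorem~\ref{T}. Your localization to cubical foliated charts (connected plaques, with foliated fields identified with $\pi$-projectable ones via Lemma~\ref{campoprojetavel}) just makes explicit the bookkeeping the paper leaves implicit. One minor aside: for vertical $U$ and horizontal $X,Y$ one has $\mathcal{L}_U g(X,Y)=-g(\nabla_X Y+\nabla_Y X,U)$, so the expression is in fact tensorial in $U$ as well, which makes your gluing step even more immediate.
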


\begin{lemma}\label{lemma2coro}  
Let $(M,g)$ be a semi-Riemannian manifold, and let $\mu: G \times M \to M$ be an isometric action such that $M / G$ is a manifold and $\pi: M \to M / G$ is a submersion. Then, the horizontal distribution $\mathcal{H}$ is locally generated by basic vector fields. Consequently, any regular homogeneous foliation is a generalized semi-Riemannian foliation.
\end{lemma}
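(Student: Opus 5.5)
The plan is to build basic horizontal generators out of Killing fields. We then combine this with Example~\ref{folheacaohomogenea} and Theorem~\ref{transnormalidadeSubmersao}.

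Fix $p\in M$ and write $\mathfrak g$ for the Lie algebra of $G$. For $\xi\in\mathfrak g$ let $\xi^*$ denote the corresponding action (Killing) field, so $\xi^*(q)=\frac{d}{dt}\big|_{t=0}\mu(\exp t\xi,q)$. Since $\pi$ is a submersion whose fibers are the orbits, $\mathcal V_q=\{\xi^*(q):\xi\in\mathfrak g\}$. We can therefore choose $\xi_1,\dots,\xi_k$ with $\xi_i^*$ a frame of $\mathcal V$ near $p$. By Lemma~\ref{SuavidadeDistribuicaoHorizontal}, $\mathcal H$ is a smooth regular distribution of rank $n-k$. The aim is to produce finitely many basic fields spanning $\mathcal H$ near $p$.

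\textbf{Step 1 (basic fields are plentiful).} A vector field is foliated exactly when it commutes with the action modulo vertical fields, by Lemma~\ref{campoprojetavel}. The natural source of basic fields is therefore $G$-invariant horizontal fields, and more generally horizontal lifts of fields on $M/G$. The difficulty is that, unlike the Riemannian case, a horizontal lift is not unique when $\mathcal R\neq 0$: by Lemma~\ref{lemaunilev} it is defined only up to $\mathcal R$. So lifting coordinate fields of $M/G$ does not automatically yield a well-defined horizontal field. My plan is to use $G$-invariance instead.

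\textbf{Step 2 (invariance of $\mathcal H$).} Each $\mu_g$ is an isometry preserving orbits, so $d\mu_g(\mathcal V_q)=\mathcal V_{gq}$ and hence $d\mu_g(\mathcal H_q)=\mathcal H_{gq}$. Take a local section $\sigma:W\to M$ of $\pi$ through $p$; this exists because $\pi$ is a submersion. Choose vector fields $Y_1,\dots,Y_{n-k}$ on a neighborhood of $\sigma(W)$ that frame $\mathcal H$. On the tube $\mu(G_0\times\sigma(W))$ extend $Y_j|_{\sigma(W)}$ by $\hat Y_j(g\sigma(x))=d\mu_g(Y_j(\sigma(x)))$. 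These fields are horizontal, and they are $G$-invariant wherever they are well defined. A $G$-invariant field $\hat Y$ satisfies $[\xi^*,\hat Y]=0$, so it is foliated by Lemma~\ref{campoprojetavel} and hence basic.

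\textbf{Step 3 (well-definedness).} This is the main obstacle. If $g\sigma(x)=g'\sigma(x)$, the two definitions differ by the isotropy action $d\mu_h$, $h\in G_{\sigma(x)}$, on $\mathcal H_{\sigma(x)}$. In the Lorentzian setting isotropy may be noncompact, so it cannot be averaged away. I would first try a local version that avoids isotropy entirely. Write a neighborhood of $p$ as $V\times\sigma(W)$ using $(v,x)\mapsto\mu(\exp(\sum v_i\xi_i),\sigma(x))$ with $v$ in a small ball $V\subset\mathbb R^k$; since $\pi$ is a submersion with orbit fibers, this map is a local diffeomorphism. Define $\hat Y_j$ by pushing forward with $d\mu_{\exp(\sum v_i\xi_i)}$. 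The result is smooth and horizontal, spans $\mathcal H$, and projects to a well-defined field on $W$: the projection of $d\mu_g Y_j(\sigma(x))$ equals $d\pi(Y_j(\sigma(x)))$ because $\pi\circ\mu_g=\pi$. Having a well-defined projection $d\pi\hat Y_j=Z_j\circ\pi$ is exactly projectability, so each $\hat Y_j$ is basic by Lemma~\ref{campoprojetavel}, with no invariance needed. Hence $\mathcal H$ is locally generated by basic fields.

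Finally, a regular homogeneous foliation is transnormal by Example~\ref{folheacaohomogenea}, and is locally given by such a quotient submersion. Theorem~\ref{transnormalidadeSubmersao} then shows that it is a generalized semi-Riemannian foliation.
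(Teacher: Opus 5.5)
Your Step~3 is essentially the paper's argument (push a horizontal frame along a transversal by the action, with horizontality coming from the isometries and projectability from $\pi\circ\mu_g=\pi$), and your $k$-dimensional slice $\exp(\operatorname{span}\{\xi_i\})$ actually improves on it: the paper takes a full neighborhood $V$ of $e$ in $G$ and asserts that $\mu\colon V\times S\to U$ is a diffeomorphism, which fails for dimension reasons whenever the isotropy is positive-dimensional---exactly the well-definedness issue you flagged. You can drop Steps~1--2, and for the final claim note that only the local identity $\pi\circ\mu_g=\pi$ (for $g=\exp(\sum v_i\xi_i)$ with $v$ small, near the transversal) is used, so the same construction works with a local plaque submersion even when $M/G$ itself is not a manifold.
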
  
\begin{proof}  
Since $\mathcal{H}$ is a smooth regular distribution (Lemma~\ref{SuavidadeDistribuicaoHorizontal}), for each $p \in M$, there exists a family of horizontal vector fields $\{Y_1, \dots, Y_k\}$ defined on an open neighborhood $U_0$ of $p$ that locally generates $\mathcal{H}$. By the Local Form Theorem for Submersions, there exist an open neighborhood $U$ of $p$ in $M$, an open neighborhood $V$ of the identity $e$ in $G$, and a submanifold $S \subset M$ passing through $p$ such that:  
\begin{enumerate}  
    \item $T_q M = T_q S \oplus T_q G(q)$ for every $q \in S$;  
    \item $U = \{ \mu_g(q) \mid q \in S, \ g \in V \}$;  
    \item $\{ \mu_g(q) \mid g \in V \} \cap S = \{ q \}$ for every $q \in S$.  
\end{enumerate}  
In particular, the action map $\mu: V \times S \to U$ is a diffeomorphism onto $U$. For each $i \in \{1, \dots, k\}$, define a vector field $\widetilde{Y}_i$ on $U$ by  
\[
\widetilde{Y}_i(\mu_g(q)) = d(\mu_g)_q(Y_i(q))
\]
for any $\mu_g(q) \in U$. Since $\mu$ acts by isometries, the family $\{\widetilde{Y}_1, \dots, \widetilde{Y}_k\}$ generates $\mathcal{H}$ on $U$. 

To verify this, consider $p' = \mu_g(q) \in U$ with $q \in S$. Given $u \in T_{p'} G(q) = \mathcal{V}_{p'}$, there exists $v \in T_q G(q) = \mathcal{V}_q$ such that $d(\mu_g)_q(v) = u$. Since $\mu_g$ is an isometry,
\[
g_{p'}(\widetilde{Y}_i(p'), u) = g_{p'}\big(d(\mu_g)_q(Y_i(q)), \, d(\mu_g)_q(v)\big) = g_q(Y_i(q), v) = 0,
\]
proving that $\widetilde{Y}_i$ is horizontal. To show that $\widetilde{Y}_i$ is projectable (and hence basic), note that $\pi \circ \mu_g = \pi$ for all $g \in G$. Therefore,
\[
d\pi_{p'}(\widetilde{Y}_i(p')) = d\pi_{\mu_g(q)}\big(d(\mu_g)_q(Y_i(q))\big) = d(\pi \circ \mu_g)_q(Y_i(q)) = d\pi_q(Y_i(q)),
\]
which depends only on $q \in S$ and is independent of $g \in V$. Hence, $\widetilde{Y}_i$ is projectable.
\end{proof}  

\begin{example}
Consider the isometric Riemannian action of $\mathbb{S}^1$ on the round sphere $\mathbb{S}^3$ that generates the Hopf fibration. Let $\mathcal{F}$ be the foliation given by the orbits of this action, and let $X$ be the unit Killing vector field generating its leaves. Consider $M = \mathbb{S}^3 \times \mathbb{R}$ equipped with the pseudo-Riemannian metric $\tilde{g} = g - dt^2$, where $g$ is the round metric on $\mathbb{S}^3$, and consider the vector field $\tilde{X} = (X,1) \in \mathfrak{X}(M)$. Define the submersion $\pi: \mathbb{S}^3 \times \mathbb{R} \to \mathbb{S}^3$ by $\pi(p,t) = \varphi^{\tilde{X}}_{-t}(p,t) \in \mathbb{S}^3 \times \{0\} \cong \mathbb{S}^3$.
The fibers of $\pi$ coincide with the leaves of the foliation $\widetilde{\mathcal{F}}$ generated by $\tilde{X}$, which is a one-dimensional lightlike foliation. Observe that $\widetilde{\mathcal{F}}$ is also a homogeneous foliation, implying that its horizontal distribution $\widetilde{\mathcal{H}}$ is locally generated by basic vector fields. Thus, $\pi: (M, \tilde{g}) \to \big(\mathbb{S}^3, \, \mathcal{D} := d\pi(\widetilde{\mathcal{H}}), \, h := g|_{\mathcal{D} \times \mathcal{D}}\big)$
is a generalized pseudo-Riemannian submersion. Furthermore, one verifies that $\mathcal{D} = d\pi(\widetilde{\mathcal{H}})$ coincides with the horizontal distribution of the classical Hopf fibration, which is non-involutive.
\end{example}

\subsection{O'Neill-Type Operator and Mean Curvature Form for \texorpdfstring{$\mathcal{A}$}{A} Involutive}

Let $\pi: (M,g) \to (B, \mathcal{D}, h)$ be a generalized semi-Riemannian submersion such that the distribution $\mathcal{A}$ is involutive (in particular, $\mathcal{D}$ is involutive). Given $Z, W \in \mathfrak{X}_b(M)$, the vector field  
\[
[Z,W] - \widetilde{[d\pi(Z), d\pi(W)]}
\]  
is vertical (see Lemma~\ref{Lema1}), where $\tilde{X}$ denotes a horizontal lift of a vector field $X$ tangent to $\mathcal{D}$. Along lightlike fibers, the horizontal lift is not unique, which introduces additional subtleties in defining geometric structures on the quotient space. However, any two horizontal lifts of the same vector field differ by an element in the radical. Therefore, we can well-define the function $\mathbf{A}: \mathfrak{X}_b(M) \times \mathfrak{X}_b(M) \to \mathbb{R}$ by  
\[
\mathbf{A}(Z,W) = g\big([Z,W] - \widetilde{[d\pi(Z), d\pi(W)]}, \, [Z,W] - \widetilde{[d\pi(Z), d\pi(W)]}\big).
\]  

Let $\mathcal{S}$ be a smooth spacelike subdistribution of $\mathcal{V}$ such that $\mathcal{A} = \mathcal{S} \oplus \mathcal{H}$. In particular, for each $v \in \mathcal{A}$, there exist unique elements $v^{\mathcal{S}} \in \mathcal{S}$ and $v^{\mathcal{H}} \in \mathcal{H}$ such that $v = v^{\mathcal{S}} + v^{\mathcal{H}}$. Since $\mathcal{A}$ is involutive by hypothesis, we can define an \emph{O'Neill-type operator} $A^{\mathcal{S}}: \mathfrak{X}(\mathcal{H}) \times \mathfrak{X}(\mathcal{H}) \to \mathfrak{X}(\mathcal{S})$ by  
\[
A^{\mathcal{S}}(Z,W) = \frac{1}{2}[Z,W]^{\mathcal{S}}.
\]  
Then, $\mathcal{H}$ is integrable if and only if $A^{\mathcal{S}}$ is identically zero, independent of the choice of $\mathcal{S}$. Given $Z,W \in \mathfrak{X}_b(M)$, we can choose $\widetilde{[d\pi(Z),d\pi(W)]}$ such that $[Z,W] - \widetilde{[d\pi(Z),d\pi(W)]} \in \mathfrak{X}(\mathcal{S})$. Therefore,  
\[
\frac{1}{4}\mathbf{A}(Z,W) = g_p(A^{\mathcal{S}}(Z,W), A^{\mathcal{S}}(Z,W)).
\]
In particular, $\mathcal{H}$ is integrable if and only if $\mathbf{A} = 0$, since $\mathcal{S}$ is spacelike. 

For $X \in \mathfrak{X}(\mathcal{H}|_L)$, define the \emph{$\mathcal{S}$-shape operator} $S^{\mathcal{S}}_X: \mathcal{S} \to \mathcal{S}$ by
\[
S^{\mathcal{S}}_X(u) = -(\nabla_u X)^{\mathcal{S}}.
\]
The \emph{$\mathcal{S}$-mean curvature form} is the map $\kappa^{\mathcal{S}}: \mathfrak{X}(\mathcal{A}|_L) \to \mathcal{F}(L)$ defined by  
\[
\kappa^{\mathcal{S}}(X) = \operatorname{tr}(S_{X^{\mathcal{H}}}^{\mathcal{S}}) = -\sum_i g(\nabla_{E_i}X^{\mathcal{H}}, E_i),
\]
where $\{E_i\}$ is a local orthonormal frame of $\mathcal{S}$ with respect to the decomposition $\mathcal{A} = \mathcal{S} \oplus \mathcal{H}$.  

\begin{proposition}
If $\mathcal{A}$ is an involutive distribution and $X, Y \in \mathfrak{X}_b(M)$, then 
\[
d\kappa^{\mathcal{S}}(X,Y) = -2\sum_i g(T_i, \nabla_{T_i} A^{\mathcal{S}}(X,Y)),
\]
where $\{T_i\}_{i=1}^{n}$ is a local orthonormal frame of $\mathcal{S}$.
\end{proposition}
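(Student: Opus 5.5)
This is the analogue of the classical formula for the exterior derivative of the mean curvature form of a Riemannian foliation (Álvarez López type). I plan to expand the definition of $d\kappa^{\mathcal{S}}$ on basic fields and reduce it to a vertical divergence of $A^{\mathcal{S}}(X,Y)$.

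\textbf{Expanding the exterior derivative.} For $X,Y\in\mathfrak{X}_b(M)$ the defining formula gives
\[
d\kappa^{\mathcal{S}}(X,Y)=X\kappa^{\mathcal{S}}(Y)-Y\kappa^{\mathcal{S}}(X)-\kappa^{\mathcal{S}}([X,Y]).
\]
Since $\mathcal{A}$ is involutive, $[X,Y]\in\mathfrak{X}(\mathcal{A})$. Its $\mathcal{H}$-component is $[X,Y]-2A^{\mathcal{S}}(X,Y)$. Moreover $\kappa^{\mathcal{S}}$ only sees the $\mathcal{H}$-part, so
\[
\kappa^{\mathcal{S}}([X,Y])=-\sum_i g\bigl(\nabla_{T_i}([X,Y]-2A^{\mathcal{S}}(X,Y)),T_i\bigr).
\]
The term $2\sum_i g(\nabla_{T_i}A^{\mathcal{S}}(X,Y),T_i)$ arising here is exactly the right-hand side of the claim, up to sign. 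It therefore remains to prove
\[
X\kappa^{\mathcal{S}}(Y)-Y\kappa^{\mathcal{S}}(X)+\sum_i g(\nabla_{T_i}[X,Y],T_i)=0.
\]
The overall sign will be checked at the end. Equivalently, the form $\Theta(X):=\kappa^{\mathcal{S}}(X)=-\sum_i g(\nabla_{T_i}X,T_i)$, viewed on all horizontal fields, has to satisfy $d\Theta(X,Y)=0$ modulo the $A^{\mathcal{S}}$ term.

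\textbf{Rewriting $\kappa^{\mathcal{S}}$ via Lie derivatives.} For $X$ basic, I will write
\[
g(\nabla_{T_i}X,T_i)=g([T_i,X],T_i)+g(\nabla_X T_i,T_i)=g([T_i,X],T_i),
\]
using that $T_i$ has constant norm. By Lemma~\ref{campoprojetavel}, $[T_i,X]$ is vertical. Hence $\kappa^{\mathcal{S}}(X)$ is the trace over $\mathcal{S}$ of $\mathcal{L}_X$ acting on vertical fields, projected to $\mathcal{S}$. In the Riemannian case this is $-\tfrac12\operatorname{tr}_{\mathcal S}(\mathcal{L}_Xg)$, i.e.\ the derivative of the log of the leafwise volume form $\omega$ of $\mathcal{S}$ in the direction $X$.

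Concretely, I set $\kappa^{\mathcal{S}}(X)\,\omega=-\mathcal{L}_X\omega$ restricted to $\mathcal{S}$, with the sign to be fixed. Then
\[
d\kappa^{\mathcal{S}}(X,Y)\,\omega=-\bigl(\mathcal{L}_X\mathcal{L}_Y-\mathcal{L}_Y\mathcal{L}_X-\mathcal{L}_{[X,Y]}\bigr)\omega+\bigl(\mathcal{L}_{[X,Y]}\omega\bigr)-\kappa^{\mathcal{S}}([X,Y])\,\omega,
\]
where the first bracket vanishes identically. The remainder compares $\mathcal{L}_{[X,Y]}\omega$ with $\mathcal{L}_{[X,Y]^{\mathcal{H}}}\omega$, and their difference is $\mathcal{L}_{2A^{\mathcal{S}}(X,Y)}\omega$ on $\mathcal{S}$. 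This yields $2\operatorname{div}_{\mathcal{S}}A^{\mathcal{S}}(X,Y)=2\sum_i g(\nabla_{T_i}A^{\mathcal{S}}(X,Y),T_i)$, giving the stated formula.

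\textbf{Main obstacle.} The hard step is justifying the identity "$\kappa^{\mathcal{S}}(X)\omega=-\mathcal{L}_X\omega$ on $\mathcal{S}$" when $\mathcal{V}$ is degenerate. Here $[T_i,X]$ is vertical but may have a radical component, and $\mathcal{S}$ need not be preserved by the flows of basic fields. I will handle this by noting that radical components are $g$-orthogonal to $\mathcal{S}$, hence invisible in $g(\cdot,T_i)$, so all traces can be taken on $\mathcal{V}/\mathcal{R}\cong\mathcal{S}$. This quotient bundle is canonical, and the flows of basic fields preserve $\mathcal{V}$ and $\mathcal{R}$, the latter because transnormality gives $\mathcal{L}_Vg=0$ on $\mathcal{H}$.

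Working with the induced volume form on $\mathcal{V}/\mathcal{R}$ makes the commutator computation independent of the choice of $\mathcal{S}$. If this abstraction proves awkward, I will fall back on a direct frame computation. That computation expands $X\kappa(Y)-Y\kappa(X)$ as $\sum_i\bigl(Xg([T_i,Y],T_i)-Yg([T_i,X],T_i)\bigr)$ and uses the Jacobi identity on $[X,[Y,T_i]]-[Y,[X,T_i]]=[[X,Y],T_i]$. The cross terms coming from differentiating the frame cancel by antisymmetry, since $g(\nabla T_i,T_j)$ is skew.
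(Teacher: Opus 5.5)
Your opening reduction is correct and matches the paper's. The step you flag as the main obstacle, however, is a genuine gap that cannot be closed. The claim that flows of basic fields preserve $\mathcal{R}$ ``because transnormality gives $\mathcal{L}_Vg=0$ on $\mathcal{H}$'' does not follow. For $V\in\mathfrak{X}(\mathcal{R})$, $Y\in\mathfrak{X}_b(M)$ and $T\in\mathfrak{X}(\mathcal{S})$, the bracket $[V,Y]$ is vertical and $g([V,Y],T)=-(\mathcal{L}_Vg)(Y,T)$. This is a mixed $\mathcal{H}\times\mathcal{V}$ component, which transnormality does not control. So the Lie derivative along a basic field need not descend to $\mathcal{V}/\mathcal{R}$, and your volume-form argument has nothing to act on. In your fallback frame computation, the $\mathcal{R}$-components of $[X,T_i]$ and $[Y,T_i]$ (with respect to $\mathcal{V}=\mathcal{S}\oplus\mathcal{R}$) leave the term
\[
E=\sum_i\bigl(g([[X,T_i]^{\mathcal{R}},Y],T_i)-g([[Y,T_i]^{\mathcal{R}},X],T_i)\bigr).
\]
Skew-symmetry of $g(\nabla T_i,T_j)$ only disposes of the $\mathcal{S}$-components, not of $E$. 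What the computation actually yields is $d\kappa^{\mathcal{S}}(X,Y)=-2\sum_i g(T_i,\nabla_{T_i}A^{\mathcal{S}}(X,Y))-E$. The paper's proof has exactly the same defect: it discards $g([V_1,Y],T_i)$ and $g([V_2,X],T_i)$ without justification.

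In fact the statement fails whenever $A^{\mathcal{S}}(\mathcal{R},\mathcal{H})\neq0$. Every section of $\mathcal{R}$ is basic, and $d\kappa^{\mathcal{S}}$ is $C^\infty(M)$-bilinear. But replacing $X\in\mathfrak{X}(\mathcal{R})$ by $fX$ changes the right-hand side by $-2\sum_iT_i(f)\,g(T_i,A^{\mathcal{S}}(X,Y))$.

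Here is a concrete instance. Take $M=\mathbb{R}^4$ with coordinates $(u,v,x,y)$, $g=2\,du\,dv+(dx+v\,dy)^2+dy^2$ and $\pi(u,v,x,y)=(u,y)$. Then:
\begin{itemize}
\item $\mathcal{V}=\operatorname{span}\{\partial_v,\partial_x\}$ and $\mathcal{H}=\operatorname{span}\{\partial_v,Y\}$ with $Y=\partial_y-v\partial_x$;
\item $\mathcal{R}=\operatorname{span}\{\partial_v\}$, and $\mathcal{A}=\operatorname{span}\{\partial_v,\partial_x,\partial_y\}$ is involutive;
\item $\partial_v$ and $Y$ are basic with $g(Y,Y)=1$, so $\pi$ is a generalized semi-Riemannian submersion onto $(\mathbb{R}^2,\operatorname{span}\{\partial_y\},dy^2)$.
\end{itemize}
Take $\mathcal{S}=\operatorname{span}\{\partial_x\}$ and $T_1=\partial_x$. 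Since $\kappa^{\mathcal{S}}(Z)=g([Z^{\mathcal{H}},\partial_x],\partial_x)$ and $[a\partial_v+bY,\partial_x]=-(\partial_xa)\partial_v-(\partial_xb)Y$ is horizontal, $\kappa^{\mathcal{S}}\equiv0$, hence $d\kappa^{\mathcal{S}}=0$.

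Now take the basic field $X=x\partial_v$. Then $[X,Y]=-x\partial_x+v\partial_v$, so $A^{\mathcal{S}}(X,Y)=-\tfrac{x}{2}\partial_x$. Since $\nabla_{\partial_x}\partial_x=0$, the right-hand side equals $1$. Here $[Y,\partial_v]=\partial_x\notin\mathcal{R}$, which directly refutes the preservation claim.

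The formula does hold under the extra hypothesis $[\mathfrak{X}_b(M),\mathfrak{X}(\mathcal{R})]\subset\mathfrak{X}(\mathcal{R})$, for instance for non-degenerate leaves, because then $E=0$. That hypothesis would have to be added to the statement; transnormality does not supply it.
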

\begin{proof}
Since $\kappa^{\mathcal{S}}([X,Y]^{\mathcal{S}}) = 0$, it follows that $\kappa^{\mathcal{S}}([X,Y]^{\mathcal{H}}) = \kappa^{\mathcal{S}}([X,Y])$. We also have the standard identity
\[
d\kappa^{\mathcal{S}}(X,Y) = X(\kappa^{\mathcal{S}}(Y)) - Y(\kappa^{\mathcal{S}}(X)) - \kappa^{\mathcal{S}}([X,Y]).
\]
Hence, it suffices to verify that
\begin{equation}\label{eqKXY}
    \kappa^{\mathcal{S}}([X,Y]^{\mathcal{H}}) = X(\kappa^{\mathcal{S}}(Y)) - Y(\kappa^{\mathcal{S}}(X)) + 2\sum_i g(T_i, \nabla_{T_i} A^{\mathcal{S}}(X,Y)).
\end{equation}

Let $\{T_i\}_{i=1}^{n}$ be a local orthonormal frame of $\mathcal{S}$. Define $X_{ij} = g([X,T_i],T_j)$ and $Y_{ij} = g([Y,T_i],T_j)$. Since $X$ and $Y$ are basic vector fields, $[X,T_i]$ and $[Y,T_j]$ are vertical. Hence, there exist vector fields $V_1, V_2 \in \mathcal{R}$ such that
\[
[X,T_i] = \sum_{j} X_{ij}T_{j} + V_1, \qquad 
[Y,T_i] = \sum_{j} Y_{ij}T_{j} + V_2.
\]

Note that
\begin{align*}  
 \kappa^{\mathcal{S}}([X,Y]^{\mathcal{H}})  &= \sum_{i} g([[X,Y]^{\mathcal{H}}, T_i], T_i)  \\
&= \sum_{i} \big( g([[X,Y], T_i], T_i) - g([[X,Y]^{\mathcal{S}}, T_i], T_i) \big) \\
&= \sum_{i} g([[X,Y], T_i], T_i) + 2\sum_{i} g(T_i, \nabla_{T_i} A^{\mathcal{S}}(X,Y)).                         
\end{align*}

By the Jacobi identity, we have
\begin{align*}  
\sum_{i} g([[X,Y], T_i], T_i)  
&= \sum_{i} \big\{ g([[X,T_i], Y], T_i) - g([[Y,T_i], X], T_i) \big\} \\
&= \sum_{i,j} \big\{ g([X_{ij}T_j, Y], T_i) + g([V_1, Y], T_i) - g([Y_{ij}T_j, X], T_i) - g([V_2, X], T_i) \big\} \\
&= \sum_{i,j} \big\{ g(X_{ij}[T_{j}, Y] - Y(X_{ij})T_j, T_i) - g(Y_{ij}[T_j, X] - X(Y_{ij})T_j, T_i) \big\} \\
&= \sum_{i,j} \big\{ X_{ij} g([T_j, Y], T_i) - Y(X_{ij}) g(T_j, T_i) \big\} \\
&\quad - \sum_{i,j} \big\{ Y_{ij} g([T_j, X], T_i) - X(Y_{ij}) g(T_j, T_i) \big\} \\
&= \sum_i \big\{ X(Y_{ii}) - Y(X_{ii}) \big\} \\
&= X(\kappa^{\mathcal{S}}(Y)) - Y(\kappa^{\mathcal{S}}(X)).
\end{align*}

Thus, equation \eqref{eqKXY} is verified.
\end{proof}

Let $\pi: (M,g) \to (B,\mathcal{D},h)$ be a generalized semi-Riemannian submersion with an involutive distribution $\mathcal{A}$ (in particular, $\mathcal{D}$ is involutive). Since $\mathcal{H}$ is locally finitely generated by basic vector fields, $\mathcal{D}$ is locally finitely generated and thus integrable by Sussmann's theorem (Theorem~\ref{Tstefansussman}).

Let $\mathcal{F}_{\mathcal{D}}$ be the integral foliation of $\mathcal{D}$. By construction, $(L_x, h^x)$ is a semi-Riemannian manifold for each leaf $L_x \in \mathcal{F}_{\mathcal{D}}$. Let $\nabla^h$ denote the map  
\[
\nabla^h: \mathfrak{X}(\mathcal{D}) \times \mathfrak{X}(\mathcal{D}) \to \mathfrak{X}(\mathcal{D})
\]
such that $(\nabla^h_X Y)|_{L_x}$ coincides with the Levi-Civita connection of $(L_x, h^x)$. In particular, it satisfies  
\[
Z h(X,Y) = h(\nabla^h_Z X,Y) + h(X,\nabla^h_Z Y),  
\]
\[
[X,Y] = \nabla^h_X Y - \nabla^h_Y X.
\]
It is uniquely determined by the Koszul formula:
\begin{multline}
    \label{eq:koszul_base}
2h(\nabla^h_X Y,Z) = X h(Y,Z) + Y h(X,Z) - Z h(X,Y) \\ - h([Y,Z],X) - h([X,Z],Y) + h([X,Y],Z).
\end{multline}
We refer to $\nabla^h$ as the Levi-Civita connection of the generalized semi-Riemannian metric $(\mathcal{D}, h)$.  

\begin{proposition}\label{prop:connection_lift}  
Let $\pi: (M,g) \to (B,\mathcal{D},h)$ be a generalized semi-Riemannian submersion such that $\mathcal{D}$ is integrable, and let $\nabla^h$ be the Levi-Civita connection of $h$. Given $X,Y \in \mathfrak{X}(\mathcal{D})$, let $\tilde{X}, \tilde{Y}$, and $\widetilde{\nabla^h_X Y}$ be the horizontal lifts of $X, Y$, and $\nabla^h_X Y$, respectively. Then,  
\[
\nabla_{\tilde{X}} \tilde{Y} - \widetilde{\nabla^h_X Y}
\]
is vertical. In particular, $\nabla_{\tilde{X}} \tilde{Y} \in \mathfrak{X}(\mathcal{A})$ and $d\pi(\nabla_{\tilde{X}} \tilde{Y}) = \nabla^h_X Y$.  
\end{proposition}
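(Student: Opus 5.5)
The plan is to run the Koszul formula for $g$ on horizontal lifts and compare it, term by term, with the Koszul formula \eqref{eq:koszul_base} for $h$. Verticality will then come from the identity $\mathcal{V}=\mathcal{H}^{\perp}$. This identity holds because $g$ is non-degenerate on $T_pM$ and $\mathcal{H}=\mathcal{V}^{\perp}$, so $(\mathcal{V}^{\perp})^{\perp}=\mathcal{V}$. Hence it suffices to show that
\[
g\bigl(\nabla_{\tilde X}\tilde Y-\widetilde{\nabla^h_XY},\,\tilde Z\bigr)=0
\]
for every $\tilde Z$ in a family of basic vector fields locally generating $\mathcal{H}$. Such a family exists by the definition of a generalized semi-Riemannian submersion.

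\textbf{Reduction to basic lifts.} By the Existence of Horizontal Lifts lemma, I may take $\tilde X$, $\tilde Y$ and $\widetilde{\nabla^h_XY}$ to be basic. This choice is harmless for two reasons.
\begin{itemize}
\item By Lemma~\ref{lemaunilev}, two horizontal lifts of the same field differ by a section of $\mathcal{R}\subset\mathcal{V}$. So changing $\widetilde{\nabla^h_XY}$ only changes the difference by a vertical field.
\item Changing $\tilde X$ or $\tilde Y$ by $V_0\in\mathfrak{X}(\mathcal{R})$ changes $g(\nabla_{\tilde X}\tilde Y,\tilde Z)$ by $g(\nabla_{V_0}\tilde Y,\tilde Z)$ or $g(\nabla_{\tilde X}V_0,\tilde Z)$. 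Each of these is a Lie-derivative/Lewis-type term, $\mathcal{L}_{V_0}g$ or $-g(V_0,\nabla_{\tilde X}\tilde Z)$ combined with $\tilde X g(V_0,\tilde Z)=0$.
\end{itemize}
To avoid that detour, I will simply read the statement for basic lifts, which is the intended setting.

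Each basic $\tilde Z$ in the generating family is $\pi$-related to a section $Z$ of $\mathcal{D}$.

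\textbf{Term-by-term comparison.} For basic $\tilde X,\tilde Y,\tilde Z$ over $X,Y,Z\in\mathfrak{X}(\mathcal{D})$, I check the six Koszul terms.
\begin{enumerate}
\item Condition~2 of the definition gives $g(\tilde Y,\tilde Z)=h(Y,Z)\circ\pi$.
\item Since $\tilde X$ is $\pi$-related to $X$, this yields $\tilde X\,g(\tilde Y,\tilde Z)=(X\,h(Y,Z))\circ\pi$. The same holds for the other two derivative terms.
\item For the bracket terms, $\mathcal{D}$ is integrable, hence involutive. Lemma~\ref{Lema1} then gives $[\tilde Y,\tilde Z]=\widetilde{[Y,Z]}+(\text{vertical})$.
\item The vertical part is $g$-orthogonal to the horizontal field $\tilde X$. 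Applying condition~2 again, $g([\tilde Y,\tilde Z],\tilde X)=h([Y,Z],X)\circ\pi$, and likewise for the other two bracket terms.
\end{enumerate}
Adding up the six terms,
\[
2g(\nabla_{\tilde X}\tilde Y,\tilde Z)=2h(\nabla^h_XY,Z)\circ\pi=2g\bigl(\widetilde{\nabla^h_XY},\tilde Z\bigr),
\]
where the last equality is condition~2 once more.

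\textbf{Conclusion.} The basic fields $\tilde Z$ locally span $\mathcal{H}$ over $C^\infty(M)$, and the pairing above is $C^\infty$-linear in $\tilde Z$. Hence the difference $\nabla_{\tilde X}\tilde Y-\widetilde{\nabla^h_XY}$ lies in $\mathcal{H}^{\perp}=\mathcal{V}$. It follows that $\nabla_{\tilde X}\tilde Y\in\mathcal{H}+\mathcal{V}=\mathcal{A}$, and applying $d\pi$ kills the vertical part, so $d\pi(\nabla_{\tilde X}\tilde Y)=\nabla^h_XY$.

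\textbf{Main obstacle.} The delicate point is the degenerate setting. First, the argument relies on $\mathcal{H}^{\perp}=\mathcal{V}$ rather than on a direct-sum splitting $T_pM=\mathcal{H}\oplus\mathcal{V}$, which fails along degenerate fibers. Second, one must check that the non-uniqueness of lifts only ever contributes vertical (radical) errors.
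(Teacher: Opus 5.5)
Your argument is correct and is essentially the paper's proof. You match the Koszul formula for $g$ on horizontal lifts term by term with \eqref{eq:koszul_base}, using condition~2 and Lemma~\ref{Lema1}, and then conclude from orthogonality to $\mathcal{H}$ that the difference lies in $\mathcal{H}^{\perp}=\mathcal{V}$. Your use of a basic generating family for $\mathcal{H}$ makes explicit why orthogonality holds on all of $\mathcal{H}_p$, including the radical directions; the paper leaves this implicit. Your ``reduction to basic lifts'' is unnecessary: any horizontal lift $\tilde Y$ with $d\pi(\tilde Y)=Y\circ\pi$ is $\pi$-related to $Y$, hence projectable and already basic, so the unfinished second bullet can simply be dropped.
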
  
\begin{proof}  
Given $Z \in \mathfrak{X}(\mathcal{D})$, let $\tilde{Z}$ be its horizontal lift. Using the Koszul formula and Lemma~\ref{Lema1}, we obtain  
\begin{align*}
2 g(\nabla_{\tilde{X}} \tilde{Y}, \tilde{Z}) &= \tilde{X} g(\tilde{Y},\tilde{Z}) + \tilde{Y} g(\tilde{X},\tilde{Z}) - \tilde{Z} g(\tilde{X},\tilde{Y})   \\
&\quad + g([\tilde{X},\tilde{Y}],\tilde{Z}) - g([\tilde{Y},\tilde{Z}],\tilde{X}) + g([\tilde{Z},\tilde{X}],\tilde{Y}) \\
&= X h(Y,Z) + Y h(X,Z) - Z h(X,Y) \\
&\quad + h([X,Y],Z) - h([Y,Z],X) + h([Z,X],Y) \\
&= 2h(\nabla^h_X Y, Z) = 2g(\widetilde{\nabla^h_X Y}, \tilde{Z}).
\end{align*}
Thus, $g(\nabla_{\tilde{X}} \tilde{Y} - \widetilde{\nabla^h_X Y}, \tilde{Z}) = 0$, which implies that $\nabla_{\tilde{X}} \tilde{Y} - \widetilde{\nabla^h_X Y}$ is orthogonal to $\mathcal{H}$, and therefore vertical.  
\end{proof}  

\begin{proposition}\label{propimportante} 
If $\pi: (M,g) \to (B,\mathcal{D},h)$ is a generalized semi-Riemannian submersion such that $\mathcal{D}$ is integrable, then there exists $V_0 \in \mathfrak{X}(\mathcal{R})$ such that   
\[
\nabla_{\tilde{X}} \tilde{Y} = \widetilde{\nabla^h_X Y} + \frac{1}{2} ([\tilde{X},\tilde{Y}] - \widetilde{[X,Y]}) + V_0
\]
for all $X,Y \in \mathfrak{X}(\mathcal{D})$, where $\tilde{X}, \tilde{Y}$, and $\widetilde{\nabla^h_X Y}$ are their respective horizontal lifts.
\end{proposition}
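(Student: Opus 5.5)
Write $\nabla_{\tilde X}\tilde Y=\widetilde{\nabla^h_XY}+W$. By Proposition~\ref{prop:connection_lift}, $W$ is vertical. The claim is that $W-\tfrac12([\tilde X,\tilde Y]-\widetilde{[X,Y]})$ lies in $\mathcal{R}$. We first take the lifts $\tilde X,\tilde Y$ to be basic (Existence of Horizontal Lifts). Any other choice of lifts differs by radical terms, which would be absorbed into $V_0$ only after checking how $\nabla$ acts on them; we defer that point to the end.

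The key idea is to split $W$ into symmetric and antisymmetric parts. Consider
\[
\nabla_{\tilde X}\tilde Y-\nabla_{\tilde Y}\tilde X=[\tilde X,\tilde Y], \qquad \nabla^h_XY-\nabla^h_YX=[X,Y].
\]
Subtracting the two expressions obtained from Proposition~\ref{prop:connection_lift} gives
\[
W(X,Y)-W(Y,X)=[\tilde X,\tilde Y]-\widetilde{\nabla^h_XY}+\widetilde{\nabla^h_YX}.
\]
Since $\widetilde{\nabla^h_XY}-\widetilde{\nabla^h_YX}$ and $\widetilde{[X,Y]}$ are both horizontal lifts of $[X,Y]$, Lemma~\ref{lemaunilev} shows they differ by an element of $\mathcal{R}$. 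Hence $W(X,Y)-W(Y,X)\equiv[\tilde X,\tilde Y]-\widetilde{[X,Y]}$ modulo $\mathcal{R}$, and the right side is vertical by Lemma~\ref{Lema1}.

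It remains to show that the symmetric part $W(X,Y)+W(Y,X)$ lies in $\mathcal{R}$. Since it is vertical, it suffices to show it is orthogonal to every vertical $U$, because $\mathcal{V}\cap\mathcal{V}^\perp=\mathcal{V}\cap\mathcal{H}=\mathcal{R}$. Pairing with $U\in\mathfrak{X}(\mathcal{V})$ kills the horizontal term $\widetilde{\nabla^h}$, so we need
\[
g(\nabla_{\tilde X}\tilde Y+\nabla_{\tilde Y}\tilde X,U)=0.
\]
This is exactly $-\mathcal{L}_Ug(\tilde X,\tilde Y)=0$ by the computation in Proposition~\ref{tranHLieD}. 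That vanishing holds by Theorem~\ref{T}, since $\pi$ is a generalized semi-Riemannian submersion. Adding the symmetric and antisymmetric parts gives $2W=([\tilde X,\tilde Y]-\widetilde{[X,Y]})+2V_0$ with $V_0\in\mathcal{R}$.

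The main obstacle is the bookkeeping of the non-uniqueness of lifts along degenerate fibers. The difference $\widetilde{\nabla^h_XY}-\widetilde{\nabla^h_YX}-\widetilde{[X,Y]}$ must be shown to be a smooth section of $\mathcal{R}$, so that $V_0$ is a genuine smooth vector field. This follows because it is a smooth horizontal field with $d\pi$-image zero, hence lies in $\mathcal{H}\cap\ker d\pi=\mathcal{R}$; regularity of $\mathcal{R}$ along fibers is supplied by the preceding lemma. The same argument shows the statement is insensitive to replacing $\tilde X$ or $\tilde Y$ by other lifts, provided the replacement fields are basic. If an arbitrary horizontal lift is used instead, the extra terms $\nabla_{\tilde X}R$ must be absorbed into $V_0$. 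Checking that they are radical needs more care, so the proof is restricted to basic lifts, which is what is used in the sequel.
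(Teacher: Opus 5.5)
Your argument is correct and is essentially the paper's proof. The paper evaluates the Koszul formula for $2g(\nabla_{\tilde X}\tilde Y,U)$ with $U$ vertical; this is exactly the sum of your antisymmetric (torsion) identity and your symmetric identity $g(\nabla_{\tilde X}\tilde Y+\nabla_{\tilde Y}\tilde X,U)=-\mathcal{L}_Ug(\tilde X,\tilde Y)=0$, and it then concludes via $\mathcal{V}\cap\mathcal{V}^{\perp}=\mathcal{R}$ and Lemma~\ref{Pontochave}.

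Your closing hedge is unnecessary, because every horizontal lift is already basic. Any horizontal lift $\tilde Y$ satisfies $d\pi(\tilde Y)=Y\circ\pi$, so it is projectable, hence foliated by Lemma~\ref{campoprojetavel}, hence basic. Also, $V_0=\nabla_{\tilde X}\tilde Y-\widetilde{\nabla^h_XY}-\tfrac12([\tilde X,\tilde Y]-\widetilde{[X,Y]})$ is automatically smooth as a difference of smooth fields. So your proof already covers every choice of lifts.
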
  
\begin{proof}  
Let $X, Y \in \mathfrak{X}(\mathcal{D})$ and $U \in \mathfrak{X}(\mathcal{V})$. By the Koszul formula, we obtain  
\[
2g(\nabla_{\tilde{X}} \tilde{Y}, U) = g([\tilde{X},\tilde{Y}], U).
\]
Using the properties of basic vector fields and verticality, it follows that  
\[
g(\nabla_{\tilde{X}} \tilde{Y} - \widetilde{\nabla^h_X Y}, U) = \frac{1}{2} g(V, U),
\]
where $V := [\tilde{X},\tilde{Y}] - \widetilde{[X,Y]} \in \mathfrak{X}(\mathcal{V})$. Since $\nabla_{\tilde{X}} \tilde{Y} - \widetilde{\nabla^h_X Y}$ is vertical (Proposition~\ref{prop:connection_lift}), applying Lemma~\ref{Pontochave} yields the existence of $V_0 \in \mathfrak{X}(\mathcal{R})$ such that  
\[
\nabla_{\tilde{X}} \tilde{Y} - \widetilde{\nabla^h_X Y} = \frac{1}{2}V + V_0.
\]
\end{proof}  

\begin{theorem}[O'Neill's Formula]\label{FormuladeOneill} 
Let $\pi: (M,g) \to (B,\mathcal{D},h)$ be a generalized semi-Riemannian submersion such that $\mathcal{D}$ is an involutive distribution. Then
\[
K^h_{\pi(p)}(d\pi(X),d\pi(Y)) = K^g_p(X,Y) + \frac{3}{4}\mathbf{A}(X,Y)
\]
for each $p \in M$ and $X,Y \in \mathfrak{X}_b(M)$ such that $X(p),Y(p)$ are orthonormal vectors, where $K^h_{\pi(p)}$ and $K^g_{p}$ denote the sectional curvatures of $(L_{\pi(p)},h) \in \mathcal{F}_{\mathcal{D}}$ and $(M,g)$, respectively.
\end{theorem}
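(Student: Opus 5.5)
We will follow the classical O'Neill computation, using Proposition~\ref{propimportante} in place of the usual decomposition $\nabla_{\tilde X}\tilde Y=\widetilde{\nabla^h_XY}+A_{\tilde X}\tilde Y$. Write $\bar X=d\pi(X)$, $\bar Y=d\pi(Y)$, which are sections of $\mathcal{D}$ because $X,Y$ are basic. Since $X(p),Y(p)$ are $g$-orthonormal and horizontal, $\bar X,\bar Y$ are $h$-orthonormal at $\pi(p)$. It therefore suffices to compare $g(R(X,Y)Y,X)$ with $h(R^h(\bar X,\bar Y)\bar Y,\bar X)$. Here $R^h$ is the curvature of $\nabla^h$, which restricted to leaves of $\mathcal{F}_{\mathcal{D}}$ is the leaf curvature. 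We may take $X=\tilde{\bar X}$ and $Y=\tilde{\bar Y}$ as horizontal lifts. We will also use the decomposition $\mathcal{A}=\mathcal{S}\oplus\mathcal{H}$ with $\mathcal{S}$ spacelike. This lets us choose lifts so that $V:=[X,Y]-\widetilde{[\bar X,\bar Y]}$ lies in $\mathcal{S}$, i.e.\ $V=2A^{\mathcal{S}}(X,Y)$ and $\mathbf{A}(X,Y)=g(V,V)$.

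The first step is to expand
\[
g(R(X,Y)Y,X)=g(\nabla_X\nabla_YY,X)-g(\nabla_Y\nabla_XY,X)-g(\nabla_{[X,Y]}Y,X).
\]
By Proposition~\ref{propimportante}, $\nabla_YY=\widetilde{\nabla^h_{\bar Y}\bar Y}+V_0'$ with $V_0'\in\mathcal{R}$, because the bracket term vanishes. We also have $\nabla_XY=\widetilde{\nabla^h_{\bar X}\bar Y}+\tfrac12V+V_0$.

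For the radical terms, note that $g(\nabla_XV_0',X)=-g(V_0',\nabla_XX)$, and $\nabla_XX$ lies in $\mathcal{A}$ by Proposition~\ref{prop:connection_lift}. Since $\mathcal{R}\perp\mathcal{A}$, this term vanishes. The same argument disposes of $V_0$.

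For the vertical terms, $g(\nabla_X\tfrac12V,X)=-\tfrac12g(V,\nabla_XX)=0$, because $\nabla_XX$ is horizontal modulo $\mathcal{R}$. On the other hand, $g(\nabla_Y\tfrac12V,X)=-\tfrac12g(V,\nabla_YX)$. Using $\nabla_YX=\widetilde{\nabla^h_{\bar Y}\bar X}-\tfrac12V+V_0''$, this becomes $\tfrac14g(V,V)$.

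The basic terms $g(\nabla_X\widetilde{\nabla^h_{\bar Y}\bar Y},X)$ reduce to $h(\nabla^h_{\bar X}\nabla^h_{\bar Y}\bar Y,\bar X)$. This follows from the Koszul-type identity in Proposition~\ref{prop:connection_lift} and the fact that $g$ on horizontal lifts equals $h$.

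For the last term, split $[X,Y]=\widetilde{[\bar X,\bar Y]}+V$. The horizontal part gives $h(\nabla^h_{[\bar X,\bar Y]}\bar Y,\bar X)$. The vertical part is $g(\nabla_VY,X)$. Since $[V,Y]$ is vertical (Lemma~\ref{Lema1}) and $X\perp\mathcal{V}$, we have $g(\nabla_VY,X)=g(\nabla_YV,X)=-g(V,\nabla_YX)=\tfrac12g(V,V)$.

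Collecting terms, we expect
\[
g(R(X,Y)Y,X)=h(R^h(\bar X,\bar Y)\bar Y,\bar X)-\tfrac14g(V,V)-\tfrac12g(V,V),
\]
which gives $K^g=K^h-\tfrac34\mathbf{A}$, as claimed.

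The main obstacle is bookkeeping with the radical and with non-unique lifts. Every pairing of an $\mathcal{R}$-component with something must land in $\mathcal{A}$ (hence vanish) or be constant along fibers; otherwise spurious terms survive. A second concern is that $\widetilde{\nabla^h_{\bar Y}\bar Y}$ must be chosen as a basic field for the reduction to $h$ to hold. We will handle the first by checking in each instance that the field being paired is in $\mathcal{A}$ via Proposition~\ref{prop:connection_lift}, using involutivity of $\mathcal{D}$ and of $\mathcal{A}$. We will handle the second by the existence lemma for basic horizontal lifts. Finally, well-definedness of $\mathbf{A}$ guarantees that the choice of lift with $V\in\mathcal{S}$ does not affect the result.
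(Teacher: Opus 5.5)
Your proposal is correct and is essentially the paper's proof. The paper expands $g(R(\tilde X,\tilde Y)\tilde X,\tilde Y)$ rather than $g(R(X,Y)Y,X)$, but it uses exactly your ingredients: Proposition~\ref{propimportante}, the vanishing of radical terms because $\nabla_{\tilde X}\tilde Y\in\mathfrak{X}(\mathcal{A})$, and the identity $g(\nabla_V\tilde X,\tilde Y)=g(\nabla_{\tilde X}V,\tilde Y)=-\tfrac12 g(V,V)$. These give the same $\tfrac14$ and $\tfrac12$ contributions.

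The only unnecessary step is your spacelike $\mathcal{S}$ with $\mathcal{A}=\mathcal{S}\oplus\mathcal{H}$, together with the appeal to involutivity of $\mathcal{A}$. The theorem assumes neither, and such an $\mathcal{S}$ need not exist in general. You never actually need it, because $\mathbf{A}(X,Y)=g(V,V)$ holds for any choice of lift directly from the definition of $\mathbf{A}$.
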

\begin{proof} 
We begin by verifying the following claim: if $U \in \mathfrak{X}(\mathcal{V})$, then  
\begin{equation}\label{EqClaim}
g(\nabla_U \tilde{X}, \tilde{Y}) = g(\nabla_{\tilde{X}} U, \tilde{Y}) = -\frac{1}{2} g(U,V),
\end{equation}
where $V := [\tilde{X},\tilde{Y}] - \widetilde{[X,Y]} \in \mathfrak{X}(\mathcal{V})$ (Lemma~\ref{Lema1}). In particular, setting $U = V$ yields  
\begin{equation}\label{EqInicial}
g(\nabla_V \tilde{X}, \tilde{Y}) = g(\nabla_{\tilde{X}} V, \tilde{Y}) = -\frac{1}{2} g(V,V).
\end{equation}

To prove \eqref{EqClaim}, recall from Lemma~\ref{Lema1} that  
\[
g(\nabla_U \tilde{X}, \tilde{Y}) = g(\nabla_{\tilde{X}} U, \tilde{Y}) + g([U, \tilde{X}], \tilde{Y}) = g(\nabla_{\tilde{X}} U, \tilde{Y}).
\]
Using metric compatibility, Proposition~\ref{propimportante}, and Lemma~\ref{Pontochave}, we obtain  
\[
g(\nabla_{\tilde{X}} U, \tilde{Y}) = -g(U, \nabla_{\tilde{X}} \tilde{Y}) = -g\left(U, \widetilde{\nabla^h_X Y} + \frac{1}{2}V + V_0\right) = -\frac{1}{2} g(U,V),
\]
which proves claim \eqref{EqClaim}.

Now, observe that $[\tilde{Y},\tilde{X}] - \widetilde{[Y,X]} = -V$. Thus, by Proposition~\ref{propimportante}, there exist $V_0, V_1 \in \mathfrak{X}(\mathcal{R})$ such that  
\begin{align*}
\nabla_{\tilde{X}} \nabla_{\tilde{Y}} \tilde{X} &= \nabla_{\tilde{X}} \widetilde{\nabla^h_Y X} - \frac{1}{2} \nabla_{\tilde{X}} V + \nabla_{\tilde{X}} V_0 \\
&= \widetilde{\nabla^h_X \nabla^h_Y X} + \frac{1}{2} \hat{V} + V_1 - \frac{1}{2} \nabla_{\tilde{X}} V + \nabla_{\tilde{X}} V_0,
\end{align*}
where $\hat{V} := [\tilde{X}, \widetilde{\nabla^h_Y X}] - \widetilde{[X, \nabla^h_Y X]} \in \mathfrak{X}(\mathcal{V})$.

On the other hand, since $\nabla_{\tilde{X}} \tilde{Y} \in \mathfrak{X}(\mathcal{A})$ (Proposition~\ref{prop:connection_lift}), metric compatibility and Lemma~\ref{Pontochave} imply  
\[
g(\nabla_{\tilde{X}} V_0, \tilde{Y}) = -g(V_0, \nabla_{\tilde{X}} \tilde{Y}) = 0 = g(V_1, \tilde{Y}).
\]
Thus, using claim \eqref{EqInicial} and the fact that $\hat{V} \in \mathfrak{X}(\mathcal{V})$, we obtain  
\begin{align*}
g(\nabla_{\tilde{X}} \nabla_{\tilde{Y}} \tilde{X}, \tilde{Y})
&= g(\widetilde{\nabla^h_X \nabla^h_Y X}, \tilde{Y}) + \frac{1}{2} g(\hat{V}, \tilde{Y}) + g(V_1, \tilde{Y}) - \frac{1}{2} g(\nabla_{\tilde{X}} V, \tilde{Y}) + g(\nabla_{\tilde{X}} V_0, \tilde{Y}) \\
&= g(\widetilde{\nabla^h_X \nabla^h_Y X}, \tilde{Y}) - \frac{1}{2} g(\nabla_{\tilde{X}} V, \tilde{Y}) \\
&= g(\widetilde{\nabla^h_X \nabla^h_Y X}, \tilde{Y}) + \frac{1}{4} g(V,V) \\
&= h(\nabla^h_X \nabla^h_Y X, Y) + \frac{1}{4} g(V,V).
\end{align*}

Similarly, we compute  
\[
g(\nabla_{\tilde{Y}} \nabla_{\tilde{X}} \tilde{X}, \tilde{Y}) = g(\widetilde{\nabla^h_Y \nabla^h_X X}, \tilde{Y}) = h(\nabla^h_Y \nabla^h_X X, Y),
\]
and using claim \eqref{EqInicial} alongside Proposition~\ref{propimportante}, we get  
\begin{align*}
g(\nabla_{[\tilde{X},\tilde{Y}]} \tilde{X}, \tilde{Y}) &= g(\nabla_{\widetilde{[X,Y]}} \tilde{X}, \tilde{Y}) + g(\nabla_V \tilde{X}, \tilde{Y}) \\ 
&= h(\nabla^h_{[X,Y]} X, Y) - \frac{1}{2} g(V,V).
\end{align*}

Therefore, combining the previous expressions, we arrive at  
\begin{align*}
g(R^g(\tilde{X},\tilde{Y})\tilde{X}, \tilde{Y}) &= g(\nabla_{\tilde{X}} \nabla_{\tilde{Y}} \tilde{X} - \nabla_{\tilde{Y}} \nabla_{\tilde{X}} \tilde{X} - \nabla_{[\tilde{X},\tilde{Y}]} \tilde{X}, \tilde{Y}) \\
&= h(\nabla^h_X \nabla^h_Y X - \nabla^h_Y \nabla^h_X X - \nabla^h_{[X,Y]} X, Y) + \frac{3}{4} g(V,V) \\
&= h(R^h(X,Y)X, Y) + \frac{3}{4} g(V,V).
\end{align*}
Using the standard sign convention $K(X,Y) = g(R(X,Y)Y, X) = -g(R(X,Y)X, Y)$ for orthonormal vectors, this yields  
\[
K^g(\tilde{X},\tilde{Y}) = K^h(X,Y) - \frac{3}{4} \mathbf{A}(X,Y),
\]
which rearranges directly to $K^h(X,Y) = K^g(\tilde{X},\tilde{Y}) + \frac{3}{4}\mathbf{A}(X,Y)$.
\end{proof}  

\section{Appendix: A Fact on Scalar Products}

We assume familiarity with semi-Riemannian geometry (see \cite{o1983semi} as a general reference). For convenience, we present the following elementary result, which plays a central role in our work.  

\begin{lemma}\label{Pontochave}  
Let $(W,g)$ be a vector space equipped with a scalar product, let $V \subset W$ be a subspace, $H = V^{\perp}$, $R = V \cap H$, and $A = V + H$. Then:  
\begin{enumerate}  
    \item $v \in R$ if and only if $g(v,u) = 0$ for all $u \in A$;  
    \item if $v_1,v_2 \in R$ and $u_1, u_2 \in A$, then $g(u_1 + v_1, u_2 + v_2) = g(u_1, u_2)$;  
    \item if $u_1, u_2 \in W$ satisfy $g(u_1, w) = g(u_2, w)$ for all $w \in A$, then there exists $v \in R$ such that $u_1 = u_2 + v$;  
    \item if $D$ is a subspace of $V$ (or $H$) such that $H = D \oplus R$ or $V = D \oplus R$, then $D$ is non-degenerate.  
\end{enumerate}  
\end{lemma}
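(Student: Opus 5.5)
The statement to prove is the appendix Lemma~\ref{Pontochave}. This is finite-dimensional linear algebra, and I will use one basic fact throughout. For a nondegenerate scalar product on $W$ and any subspace $V$, we have $\dim V^{\perp} = \dim W - \dim V$ and $(V^{\perp})^{\perp} = V$. Applied to the subspace $A = V+H$, this gives
\[
A^{\perp} = V^{\perp}\cap H^{\perp} = H \cap V = R .
\]

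\textbf{Items (1) and (2).} Item (1) is a restatement of $A^{\perp}=R$, so it follows from the identity above. For (2), expand bilinearly:
\[
g(u_1+v_1,u_2+v_2) = g(u_1,u_2) + g(u_1,v_2) + g(v_1,u_2) + g(v_1,v_2).
\]
The last three terms vanish by (1), because $v_1,v_2\in R$ and $u_1,u_2\in A$ (and $R\subset A$).

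\textbf{Item (3).} Put $v := u_1-u_2$. By hypothesis $g(v,w)=0$ for every $w\in A$, so $v\in A^{\perp}=R$. Note that (1) as literally stated assumes $v$ already lies in $R$, whereas here $v$ is only known to lie in $W$. What is actually needed is the identity $A^{\perp}=R$, valid for arbitrary $v\in W$, which was established at the outset.

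\textbf{Item (4).} This is the only step needing care. Take $H = D\oplus R$ (the case $V=D\oplus R$ is identical with the roles of $V$ and $H$ swapped). Let $d\in D$ satisfy $g(d,d')=0$ for all $d'\in D$. Then:
\begin{itemize}
\item $g(d,r)=0$ for every $r\in R$, since $R\perp A \supset H$. Hence $d$ is orthogonal to all of $D\oplus R = H$.
\item $g(d,x)=0$ for every $x\in V$, because $d\in H=V^{\perp}$.
\end{itemize}
So $d\perp V+H = A$, and therefore $d\in A^{\perp}=R$. Since also $d\in D$ and $D\cap R=\{0\}$, we get $d=0$, so $D$ is nondegenerate.

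The main obstacle is simply remembering to use the full identity $A^{\perp}=R$, and not only the inclusion $R\subset A^{\perp}$. Both (3) and (4) depend on it.
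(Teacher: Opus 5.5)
Your proof is correct and follows essentially the same route as the paper: items (1)–(3) rest on $A^{\perp}=V^{\perp}\cap H^{\perp}=H\cap V=R$. In item (4) the difference is only cosmetic: you conclude $d\in A^{\perp}=R$, while the paper concludes $d\in H^{\perp}=V$ and hence $d\in H\cap V=R$. Your aside on item (3) is unnecessary, since item (1) is a biconditional for arbitrary $v\in W$ and is exactly the identity $A^{\perp}=R$, which is how the paper invokes it there.
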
  
\begin{proof}  
Given $u \in A$, write $u = u_1 + u_2$ with $u_1 \in H$ and $u_2 \in V$. If $v \in R = V \cap H$, then $g(v, u_1) = 0$ and $g(v, u_2) = 0$, so $g(u, v) = 0$ by linearity. Conversely, if $g(v,u) = 0$ for all $u \in A$, then $v \in A^\perp = (V + H)^\perp = V^\perp \cap H^\perp = H \cap V = R$, completing the proof of Item 1. Item 2 follows directly from Item 1. Item 3 holds because $g(u_1 - u_2, w) = 0$ for all $w \in A$, which by Item 1 implies $u_1 - u_2 \in R$.

Finally, we prove Item 4. Suppose $H = D \oplus R$. Let $w \in D$ satisfy $g(w, v) = 0$ for all $v \in D$. Given any $u \in H$, write $u = v + x$ with $v \in D$ and $x \in R$. Then $g(w, u) = g(w, v) + g(w, x) = 0$, implying $w \in H^{\perp} = V$. Thus $w \in D \cap V \subset H \cap V = R$, which implies $w \in D \cap R = \{0\}$. An identical argument holds if $V = D \oplus R$, proving that $D$ is non-degenerate in both cases.  
\end{proof}


\begin{thebibliography}{31}

\bibitem{aazami2023finsler}
Amir Babak Aazami, Miguel \'Angel Javaloyes, and Marcus~C Werner.
\newblock Finsler pp-waves and the Penrose limit.
\newblock {\em General Relativity and Gravitation}, 55(3):52, 2023.

\bibitem{agrachev2013control}
Andrei~A Agrachev and Yuri Sachkov.
\newblock {\em Control theory from the geometric viewpoint}, volume~87.
\newblock Springer Science \& Business Media, 2013.

\bibitem{alexandrino2024equifocal}
Marcos~M Alexandrino, Benigno Alves, and Miguel~Angel Javaloyes.
\newblock On equifocal Finsler submanifolds and analytic maps.
\newblock {\em Israel Journal of Mathematics}, 259(1):203--237, 2024.

\bibitem{alexandrino2019finsler}
Marcos~M Alexandrino, Benigno~O Alves, and Hengameh~R Dehkordi.
\newblock On Finsler transnormal functions.
\newblock {\em Differential Geometry and its Applications}, 65:93--107, 2019.

\bibitem{alexandrino2019singular}
Marcos~M Alexandrino, Benigno~O Alves, and Miguel~Angel Javaloyes.
\newblock On singular Finsler foliation.
\newblock {\em Annali di Matematica Pura ed Applicata (1923-)}, 198(1):205--226, 2019.

\bibitem{alexandrino2015lie}
Marcos~M Alexandrino, Renato~G Bettiol, et~al.
\newblock {\em Lie groups and geometric aspects of isometric actions}, volume~82.
\newblock Springer, 2015.

\bibitem{alexandrino2013progress}
Marcos~M Alexandrino, Rafael Briquet, and Dirk T{\"o}ben.
\newblock Progress in the theory of singular Riemannian foliations.
\newblock {\em Differential Geometry and its Applications}, 31(2):248--267, 2013.

\bibitem{alexandrino2022lie}
Marcos~M Alexandrino, Marcelo~K Inagaki, Mateus de~Melo, and Ivan Struchiner.
\newblock Lie groupoids and semi-local models of singular Riemannian foliations.
\newblock {\em Annals of Global Analysis and Geometry}, 61(3):593--619, 2022.

\bibitem{allout2022homogeneous}
Souheib Allout, Abderrahmane Belkacem, and Abdelghani Zeghib.
\newblock On homogeneous 3-dimensional spacetimes: focus on plane waves.
\newblock {\em Geometriae Dedicata}, 2025.

\bibitem{alves2024isoparametric}
Benigno~Oliveira Alves and Patr{\'\i}cia Mar{\c{c}}al.
\newblock Isoparametric functions and mean curvature in manifolds with Zermelo navigation.
\newblock {\em Annali di Matematica Pura ed Applicata (1923-)}, 203(3):1285--1310, 2024.

\bibitem{bolosgeometric}
Vicente~J Bol{\'o}s.
\newblock Geometric description of lightlike foliations by an observer in general relativity.
\newblock In {\em Mathematical Proceedings of the Cambridge Philosophical Society}, volume 139, pages 181--192. Cambridge University Press, 2005.

\bibitem{caramello2024transverse}
Francisco~C Caramello~Jr, Henrique~A~Puel Martins, and Ivan~P Costa~e~Silva.
\newblock Transverse geometry of Lorentzian foliations with applications to Lorentzian orbifolds.
\newblock {\em arXiv preprint arXiv:2402.05907}, 2024.

\bibitem{cartan1938familles}
{\'E}lie Cartan.
\newblock Familles de surfaces isoparam{\'e}triques dans les espaces {\`a} courbure constante.
\newblock {\em Annali di Matematica Pura ed Applicata}, 17(1):177--191, 1938.

\bibitem{chaib2025isometries}
Salah Chaib, Ana~Cristina Ferreira, and Abdelghani Zeghib.
\newblock Isometries of 3-dimensional semi-Riemannian Lie groups.
\newblock {\em Transformation Groups}, 2025.

\bibitem{chaves2020foliations}
Rosa Maria dos Santos~Barreiro Chaves and Euripedes Carvalho~da Silva.
\newblock Foliations by spacelike hypersurfaces on Lorentz manifolds.
\newblock {\em Results in Mathematics}, 75(1):36, 2020.

\bibitem{dehkordi2019huygens}
Hengameh~R Dehkordi and Alberto Saa.
\newblock Huygens' envelope principle in Finsler spaces and analogue gravity.
\newblock {\em Classical and Quantum Gravity}, 36(8):085008, 2019.

\bibitem{dolgonosova2018pseudo}
A~Yu Dolgonosova and NI~Zhukova.
\newblock Pseudo-Riemannian foliations and their graphs.
\newblock {\em Lobachevskii Journal of Mathematics}, 39:54--64, 2018.

\bibitem{duggal2012foliations}
Krishan Duggal.
\newblock Foliations of lightlike hypersurfaces and their physical interpretation.
\newblock {\em Central European Journal of Mathematics}, 10(5):1789--1800, 2012.

\bibitem{galloway2004null}
Gregory~J Galloway.
\newblock Null geometry and the Einstein equations.
\newblock In {\em The Einstein Equations and the Large Scale Behavior of Gravitational Fields: 50 Years of the Cauchy Problem in General Relativity}, pages 379--400. Springer, 2004.

\bibitem{huber2023fundamental}
Matthieu Huber and Miguel~Angel Javaloyes.
\newblock The fundamental equations of a pseudo-Finsler submersion.
\newblock {\em Annali di Matematica Pura ed Applicata (1923-)}, 202(4):1877--1905, 2023.

\bibitem{javaloyes2023general}
Miguel~A Javaloyes, Enrique Pend{\'a}s-Recondo, and Miguel S{\'a}nchez.
\newblock A general model for wildfire propagation with wind and slope.
\newblock {\em SIAM Journal on Applied Algebra and Geometry}, 7(2):414--439, 2023.

\bibitem{lee2013smooth}
John~M Lee.
\newblock Smooth manifolds.
\newblock In {\em Introduction to smooth manifolds}, pages 1--31. Springer, 2013.

\bibitem{lewis1998affine}
Andrew~D Lewis.
\newblock Affine connections and distributions with applications to nonholonomic mechanics.
\newblock {\em Reports on Mathematical Physics}, 42(1-2):135--164, 1998.

\bibitem{lytchak2016riemannian}
Alexander Lytchak and Burkhard Wilking.
\newblock Riemannian foliations of spheres.
\newblock {\em Geometry \& Topology}, 20(3):1257--1274, 2016.

\bibitem{montiel1999uniqueness}
Sebasti{\'a}n Montiel.
\newblock Uniqueness of spacelike hypersurfaces of constant mean curvature in foliated spacetimes.
\newblock {\em Mathematische Annalen}, 314(3):529--553, 1999.

\bibitem{o1966fundamental}
Barrett O'Neill.
\newblock The fundamental equations of a submersion.
\newblock {\em Michigan Mathematical Journal}, 13(4):459--469, 1966.

\bibitem{o1983semi}
Barrett O'Neill.
\newblock {\em Semi-Riemannian geometry with applications to relativity}.
\newblock Academic Press, 1983.

\bibitem{qadir2006foliation}
Asghar Qadir and Azad~A Siddiqui.
\newblock Foliation of the Schwarzschild and Reissner--Nordstr{\"o}m space--times by flat space-like hypersurfaces.
\newblock {\em International Journal of Modern Physics D}, 15(09):1419--1440, 2006.

\bibitem{walschap1999spacelike}
Gerard Walschap.
\newblock Spacelike metric foliations.
\newblock {\em Journal of Geometry and Physics}, 32(2):97--101, 1999.

\bibitem{wang1987isoparametric}
Qi-Ming Wang.
\newblock Isoparametric functions on Riemannian manifolds. {I}.
\newblock {\em Mathematische Annalen}, 277(4):639--646, 1987.

\bibitem{zeghib1999isometry}
Abdelghani Zeghib.
\newblock Isometry groups and geodesic foliations of Lorentz manifolds. {Part I}: Foundations of Lorentz dynamics.
\newblock {\em Geometric \& Functional Analysis GAFA}, 9(4):775--822, 1999.

\end{thebibliography}
\end{document}